\newcommand{\CC}{\mathcal{C}}
\newcommand{\CP}{\mathcal{P}}
\renewcommand{\Im}{{\rm Im}}
\newcommand{\tr}{\mathrm{tr}}
\newcommand{\GL}{\mathrm{GL}}
\newcommand{\C}{\mathbb C}
\newcommand{\D}[1]{\ensuremath{\mathrm{D}#1}}
\newcommand\restr[2]{\ensuremath{\left.#1\right|_{#2}}}
\theoremstyle{plain}
\newtheorem{thm}{Theorem}[section]
\newtheorem{cor}[thm]{Corollary}
\newtheorem{lem}[thm]{Lemma}
\newtheorem{prop}[thm]{Proposition}
\newtheorem{ques}[thm]{Question}
\newtheorem{rem}[thm]{Remark}
\newtheorem{rems}[thm]{Remarks}
\theoremstyle{definition}
\newtheorem{defn}[thm]{Definition}
\newtheorem{claim}{Claim}
\numberwithin{equation}{section}
\def\d{\delta}
\def\s{\sigma}
\def\d{\delta}
\def\s{\sigma}
\def\CC{{\mathbb C}}
\def\RR{{\mathbb R}}
\def\ZZ{{\mathbb Z}}
\def\d{{\mathrm{d}}}
\def\M{{\mathcal{M}}}
\def\MAC{\mathcal{M}_{\dot{H}^1}}
\def\GH{\mathcal{G}^{\dot{H}^1}}
\def\GL{\mathcal{G}^{L^2}}
\def\id{\mathrm{id}}
\def\Sd{{\mathbb{S}^3}}
\def\SS{{\mathbb{S}}}
\def\S{{\mathbb{S}^{2n+1}}}
\def\Sil{{\mathbb{S}_{l^2}^{\infty}}}
\def\SiL{{\mathbb{S}_{L^2}^{\infty}}}
\def\SiLm{\mathbb{S}_{L^2}^{\infty, m}}
\def\i{{\mathrm{i}}}
\def\Re{{\mathrm{Re}}}
\def\Im{{\mathrm{Im}}}
\def\D{\mathrm{D}}
\def\II{\mathrm{\RNum{2}}}
\def\Diff{\mathrm{Diff}}
\def\U{\mathcal{U}}
\def\tu{\Tilde{u}}
\def\tr{\Tilde{\rho}}
\def\Lt{L^2\left(\SS^1, \CC \right)}
\def\CP{\CC\mathrm{P}^{\infty}}
\def\vardalp{\varPhi^{*}\d\alpha}
\def\JHo{J^{\dot{H}^1}}
\def\XiHo{\xi_{\dot{H^1}}}
\newcommand{\gc}{\gamma}
\newcommand{\dgc}{\dot{\gc}}
\setlist[itemize]{noitemsep, topsep=0pt}
\newcommand{\vast}{\bBigg@{2}}
\newcommand{\Vast}{\bBigg@{5}}
\newcommand{\RNum}[1]{\uppercase\expandafter{\romannumeral #1\relax}}
\title{On Mañé's critical value for the two-component Hunter-Saxton system and an infinite-dimensional magnetic Hopf--Rinow theorem} 
\author{L. Maier}
\address{Faculty of Mathematics and Computer Science,
	University of Heidelberg,
	Im Neuenheimer Feld 205,
	D-69120 Heidelberg, Germany}
\email{lmaier@mathi.uni-heidelberg.de}
\begin{document}
\renewcommand{\abstractname}{Abstract}
\begin{abstract}
In this paper, we introduce a nonlinear system of partial differential equations, the magnetic two-component Hunter--Saxton system (M2HS). This system is formulated as a magnetic geodesic equation on an infinite-dimensional Lie group equipped with a right-invariant metric, namely the $\dot{H}^1$-metric, which is closely related to the infinite-dimensional Fisher--Rao metric. The magnetic field is given by the exterior derivative of an infinite-dimensional contact-type form.\\[0.3em]
We define Mañé's critical value for exact magnetic systems on Hilbert manifolds in full generality and compute it explicitly for the (M2HS). Moreover, we establish an infinite-dimensional Hopf--Rinow theorem for this magnetic system, where Mañé's critical value serves as the threshold beyond which the Hopf--Rinow property fails.\\[0.3em]
This geometric framework enables a detailed analysis of the blow-up behavior of solutions to the (M2HS). Using this insight, we extend solutions beyond blow-up by introducing and proving the existence of global conservative weak solutions. This extension is achieved by extending the Madelung transform from an isometry to a magnetomorphism, thereby embedding the magnetic system into a magnetic system on an infinite-dimensional sphere equipped with the exterior derivative of the standard contact form as the magnetic field.\\[0.3em]
Crucially, this setting can always be reduced, via a dynamical reduction theorem, to a totally magnetic three-sphere, providing deeper insight into the underlying dynamics.
\end{abstract}
\maketitle
 \section{Introduction}
In his seminal work \cite{ar66}, V. Arnold discovered that the Euler equation in hydrodynamics, which describes the motion of an incompressible and inviscid fluid occupying a fixed domain (with or without boundary), is precisely the geodesic equation on the group of volume-preserving diffeomorphisms of the domain equipped with a right-invariant Riemannian metric, the $L^2$ metric. Notably, the analytical study of the differential geometry of the incompressible Euler equations began by Ebin--Marsden in \cite{EM70}.

From the perspective of differential geometry, it is natural to ask: If we change the infinite-dimensional Lie group and/or the Riemannian metric, which partial differential equations (PDEs) can also be described as geodesic equations? This has been investigated intensively in the past. For a general overview, we refer to the monograph by Arnold–-Khesin \cite{AK98} and the survey articles \cite{Khesin-Mis-Mod-inf-Newton, Vi08}.

Before we proceed, we would like to highlight an important example relevant to this work: the Hunter--Saxton equation, which is the geodesic equation on the Virasoro group equipped with the $\dot{H}^1$-metric. This was discovered by Khesin--Misiolek in \cite{km02} and later used in \cite{l07.1, l07.2} to prove, for instance, the existence of global weak solutions for this PDE.

We now move on to discuss a natural generalization of the geodesic equation from the perspective of Hamiltonian dynamics: Newton's equation. Nowadays, several PDEs from mathematical physics possess a formulation as an infinite-dimensional Newton equation, including the geodesic equation. Notable examples include the barotropic inviscid fluid, fully compressible fluid, magnetohydrodynamics, inviscid Burgers’ equation, the Hamilton–Jacobi equation, the Klein–Gordon equation, the KdV equation, and the Camassa–Holm equation. For a general overview of the landscape of this topic, we refer to \cite{KhesinMisilokeModinMadelung-proc, Khesin-Mis-Mod-inf-Newton}.

In physical terms, Newton's equation describes the motion of a particle under the influence of a potential force. Thus, from a physical point of view, the next natural step is to investigate the motion of a charged particle in a magnetic field. In mathematics, this falls under the framework of Hamiltonian dynamics, specifically magnetic systems. Following the pioneering work of V. Arnold \cite{ar61} in the 1960s, finite-dimensional magnetic systems have garnered significant attention in recent years; see, for example, \cite{AbbMacMazzPat17, AssBenLust16, Co06, CIPP, Gin, Man}. It is noteworthy that the trajectories of magnetic systems are called \emph{magnetic geodesics}, which are solutions to the corresponding equation of motion known as the \emph{magnetic geodesic equation}; see \Cref{d: Magnetic system}.

This naturally leads to the following question, arising from the preceding discussion of various PDEs as geodesic or Newton equations:
\begin{quote}
Are there infinite-dimensional magnetic systems such that the associated magnetic geodesic equation is a PDE?
\end{quote}

We provide a positive answer to this question in this article by introducing the \emph{magnetic two-component Hunter-Saxton system} \eqref{(M2HS)}. Before stating the main contributions of this paper, let us emphasize the following: A key point of interest is to explore the similarities and differences between standard and magnetic geodesics, in this paper by means of a Hopf--Rinow theorem. According to this pivotal result, every two points on a closed connected finite-dimensional manifold can be connected by a standard geodesic \cite{Pet}. However, this finding does not generally hold for Hilbert manifolds \cite{HopfrinowfalseAktkin,HopfrinowfalseEkeland}, but it is surprisingly true for half-Lie groups due to the recent work of Bauer-Harms-Michor \cite{Bauer_2025}. 

Notably, a standard fact in differential geometry ensures that the existence of a connecting geodesic is independent of its speed or energy. Therefore, it is natural to ask whether there exists a magnetic geodesic connecting two given points with a prescribed kinetic energy. An important role in addressing this question is played by the Mañé critical value, which, until now, has been defined only for finite-dimensional magnetic systems \cite{CIPP,Man}. \\
\noindent 
\\
\textbf{The main contributions of this article} are as follows:

\begin{itemize}
		\item \textbf{Mañé's critical value for Hilbert manifolds:} 
	We introduce Mañé's critical value for for exact magnetic systems on Hilbert manifolds (\Cref{Def: Mañé's critical value or Hilber manifolds}). 
	
	\item \textbf{The dynamical reduction theorem:} We compute Mañé's critical value in \Cref{L: Mane crit value for SIL} for a natural infinite-dimensional magnetic system: the unit sphere in the space of square-integrable complex-valued functions, \(\SiL\), equipped with the round metric and an infinite-dimensional contact form defined in \Cref{def:contact_form_infinite_dimension}. Using a dynamical reduction theorem (\Cref{cor: dynamical reduction of SiL to Sd}), we reduce the dynamics to a totally magnetic three-sphere, as defined in \Cref{Definition totally magnetic submanifold}, and establish an infinite-dimensional Hopf--Rinow theorem for the magnetic system. The Mañé critical value marks the threshold at which the Hopf--Rinow theorem no longer applies (\Cref{t:mane for SiL}).
	
	\item \textbf{The Madelung transform as a magnetomorphism:} 
	We extend the Madelung transform from being an isometry of an infinite-dimensional Lie group \(G\), equipped with a right-invariant metric, to an open subset $\U$ of the unit sphere in the space of complex-valued square-integrable functions. This transform is demonstrated to be a \emph{magnetomorphism} in the sense of \Cref{d: magnetomorphism}, as shown in \Cref{t: magnetic Hunter Saxton system}. As a special case, we recover the corresponding results in \cite{l07.1, Lennels13}. Our choice of a magnetic field is derived naturally through the lens of infinite-dimensional contact geometry, transforming \(G\) into an infinite-dimensional Boothby-Wang bundle over the cotangent bundle of probability densities on \(\mathbb{S}^1\), equipped with its standard symplectic form (\Cref{C: TDens as boothby wang bundel}).
	
	\item \textbf{A PDE as a magnetic geodesic equation:} 
	We provide a positive answer to the question posed earlier by introducing the \emph{magnetic two-component Hunter-Saxton system} \eqref{(M2HS)} and proving in \Cref{t: magnetic Hunter Saxton system} that solutions of \eqref{(M2HS)} are in one-to-one correspondence with the magnetic geodesics of an infinite-dimensional magnetic system. To the best of the author's knowledge, this is the first instance in the literature where a PDE is shown to be precisely a magnetic geodesic equation on an infinite-dimensional Lie group.
	
	\item \textbf{Geometry of blow-ups and global conservative weak solutions:} 
	Using the framework of infinite-dimensional Hamiltonian dynamics, we explain the occurrence of blow-ups in the \eqref{(M2HS)}(\Cref{t: geometric interpret of blow ups}), with visualizations in \Cref{f: geometric blow up criteria}. Leveraging the Madelung transform as a magnetomorphism (\Cref{t: Madelungtransform as Magnetomorphims}) and the aforementioned dynamical reduction theorem (\Cref{cor: dynamical reduction of SiL to Sd}), we can deduce from the initial data of a solution of \eqref{(M2HS)} whether a blow-up will occur. More intriguingly, this geometric framework enables us to establish a notion of global conservative weak solutions under mild regularity assumptions (\Cref{d: weak solutions}) and provides a natural explanation for their global existence (\Cref{t: global weak solutions }). Analytical details are provided in \cite{Maier26Globalweak}, and as a special case, we recover results from \cite{l07.1, l07.2, wu11}.
	
	\item \textbf{Mañé's value for the (M2HS):} 
	We compute Mañé's critical value for the natural completion \(\MAC\) of the magnetic system on the Lie group \(G\), which we have developed in \Cref{t: global weak solutions }. As an illustration, we show that the Mañé critical value serves as the threshold where the Hopf--Rinow theorem ceases to hold for this magnetic system (\Cref{t: Manes critical value for (M2HS)}), analogous to \Cref{t:mane for SiL}. Using \Cref{t: magnetic Hunter Saxton system}, we provide a Hopf--Rinow theorem for solutions of \eqref{(M2HS)}.
\end{itemize}
\noindent
\\
A visual representation of the current state of these contributions is provided in the following diagram:

\begin{equation}\label{f: state of th art}
	\begin{tikzpicture}
		\node (A) at (0, 2) {$(\MAC, \GH,\Phi^* \d\alpha)$};
		\node (B) at (7, 2) {$(\U, \GL, \d\alpha)\subseteq (\SiL, \GL, \d\alpha) $};
		\node (D) at (7, -2) {$(S^3, g, \d\alpha)$};
		\node (E) at (0, -2) {$T_{\mathrm{id}} \MAC$};
		
		\draw[->] (A) -- node[above] {$\cong$} node[below]{$\mathrm{Madelung}$} (B);
		\draw[->, bend left=45] (A) to (E);
		\draw[->, bend left=45] (E) to (A);
		\draw[right hook->] (D) -- (B);
		\draw[dashed, ->] (D)--(E);
		
	\end{tikzpicture}
\end{equation}

The two arrows between the magnetic system \((\MAC, \GH, \Phi^* \d\alpha)\) and \(T_{\mathrm{id}} \MAC\) on the left side of \eqref{f: state of th art}represent the one-to-one correspondence between magnetic geodesics (respectively, weak magnetic geodesics) in \((\MAC, \GH, \Phi^* \d\alpha)\) and solutions of \eqref{(M2HS)}, as established in \Cref{t: magnetic Hunter Saxton system} and \Cref{t: global weak solutions }. 
Starting with a global conservative weak solution \((u, \rho)\) of \eqref{(M2HS)}, which lives in \(T_{\mathrm{id}} \MAC\), we obtain a corresponding magnetic geodesic \((\varphi, \tau)\) in \((\MAC, \GH, \Phi^* \d\alpha)\) via the aforementioned duality. Using the Madelung transform \(\varPhi\) (depicted by the arrow in the diagram), which is a magnetomorphism according to \Cref{t: Madelungtransform as Magnetomorphims}, \(\varPhi(\varphi, \tau)\) becomes a magnetic geodesic in \((\U, \GL, \d\alpha)\). 
Since \(\U \subset \SiL\) is an open subset, it follows that \(\varPhi(\varphi, \tau)\) is also a magnetic geodesic in \((\SiL, \GL, \d\alpha)\). By the dynamical reduction theorem (\Cref{cor: dynamical reduction of SiL to Sd}), this magnetic geodesic remains confined to a totally magnetic three-sphere, depicted by the embedding arrow on the right side of the diagram in \eqref{f: state of th art}.

By traversing all the arrows in this order, the diagram provides a means to translate problems concerning the highly nonlinear PDE \eqref{(M2HS)} into questions about the magnetic geodesic flow on a three-sphere. This magnetic geodesic flow is fully understood via the Hopf--Rinow theorem, as established in previous work by Albers, Benedetti, and the author \cite{ABM23}. 

In summary, we utilize the structure of the magnetic geodesic flow on the three-sphere to address the challenges presented by \eqref{(M2HS)}, as indicated by the dashed arrow in the lower row of \eqref{f: state of th art}.

\noindent
\\
\textbf{Acknowledgments.}
The author is indebted to P. Albers and B. Khesin for valuable discussions throughout the project and for their constant interest in his work. The author thanks P. Albers and G. Benedetti for their collaboration in \cite{ABM23}, which was a source of inspiration for this work.  Appreciation is extended to Ana Chavez Caliz for providing all the figures in this document. The author also acknowledges L. Assele and G. Benedetti for insightful discussions on Mañé’s critical value. Thanks are due to M. Bauer, A. Chavez Caliz, L. Deschamps, K. Modin, S. Preston, and C. Vizman for their helpful comments on an earlier version of this work.
The author further acknowledges funding by the Deutsche Forschungsgemeinschaft (DFG, German Research Foundation) – 281869850 (RTG 2229), 390900948 (EXC-2181/1) and 281071066 (TRR 191). The author gratefully acknowledges support from the Simons Center for Geometry and Physics, Stony Brook
University at which some of the research for this paper was performed during the program
\textit{Mathematical Billiards: at the Crossroads of Dynamics, Geometry, Analysis, and Mathematical Physics.} The author thanks also V. Assenza and V. Ramos for their warm hospitality at IMPA in Rio de Janeiro in February 2024 where some of the research for this paper was performed.  
\section{ Magnetic systems and Mañé critical value}\label{S: 2}
\subsection{Intermezzo: Magnetic systems}In the 1960s, the motion of a charged particle in a magnetic field was placed within the framework of modern dynamical systems by V.\ Arnold in his pioneering work \cite{ar61}. The motion has the following mathematical description:

\begin{defn}\label{d: Magnetic system}
	Let \((M,g)\) be a connected Riemannian Hilbert manifold, and let \(\sigma \in \Omega^2(M)\) be a closed two-form. The form \(\sigma\) is called a \emph{magnetic field}, and the triple \((M,g,\sigma)\) is called a \emph{magnetic system}. This defines the skew-symmetric bundle endomorphism \(Y \colon TM \to TM\), called the \emph{Lorentz force}, by:
	\begin{equation}\label{e:Lorentz}
		g_q\left(Y_qu, v\right) = \sigma_q(u, v), \qquad \forall\, q \in M,\ \forall\, u, v \in T_qM.
	\end{equation}
	A smooth curve \(\gamma \colon \mathbb{R} \to M\) is called a \emph{magnetic geodesic} of \((M,g,\sigma)\) if it satisfies:
	\begin{equation}\label{e:mg}
		\nabla_{\dot{\gamma}} \dot{\gamma} = Y_{\gamma} \dot{\gamma},
	\end{equation}
	where \(\nabla\) denotes the Levi-Civita connection of the metric \(g\).
\end{defn}

\begin{rems}
	\item \textbf{Special case: Geodesics.} From \eqref{e:mg}, it is evident that a magnetic geodesic with \(\sigma = 0\) reduces to a standard geodesic of the metric \(g\). Therefore, \eqref{e:mg} can be interpreted as a linear perturbation of the geodesic equation. A key point of interest is to explore the similarities and differences between standard and magnetic geodesics.
\end{rems}

Since $Y$ is skew-symmetric, magnetic geodesics have constant kinetic energy $E(\gamma,\dot\gamma):=\tfrac12g_\gamma(\dot\gamma,\dot\gamma)$, and hence constant speed $|\dot\gamma|:=\sqrt{g_\gamma(\dot\gamma,\dot\gamma)}$, just like standard geodesics.	
Energy conservation is a footprint of the Hamiltonian nature of the system. Indeed, let us define the \emph{magnetic geodesic flow} on the tangent bundle by
	\[
	\varPhi_{g,\sigma}^t\colon TM\to TM,\quad (q,v)\mapsto \left( \gc_{q,v}(t),\dgc_{q,v}(t)\right),\quad \forall t\in\RR,
	\] where $\gc_{q,v}(t)$ is the unique magnetic geodesic with initial values $(q,v)\in TM$. By \cite{Gin}, $\Phi^t_{g,\sigma}$ is the Hamiltonian flow induced by the kinetic energy $E\colon TM\to\RR$ and the twisted symplectic form 
	\begin{equation}\label{e: twisted symplectic form}
	    	\omega_\sigma:=\d\lambda-\pi^*_{TM}\sigma,
	\end{equation}
	where $\lambda$ is the metric pullback of the canonical Liouville $1$-form from $T^*M$ to $TM$ and $\pi_{TM}\colon TM\to M$ is the projection.	However, differently from the case of standard geodesics, magnetic geodesics of different speeds are not just reparametrization of unit speed magnetic geodesics. For instance, if $M=\SS^2$, $g$ has constant curvature $1$ and $\sigma$ is the corresponding area form, then magnetic geodesics of kinetic energy $k$ are geodesic circles of radius $\arctan(\sqrt
	{2k})$ traversed counterclockwise, see \cite{BKmag}. 
	\begin{rem}
	\item \textbf{A different perspective.} For every \(s > 0\), a curve \(\gamma\) is a magnetic geodesic of \((M, g, \sigma)\) with speed \(\frac{1}{s}\) if and only if the unit-speed reparametrization of \(\gamma\) is a magnetic geodesic of \((M, g, s\sigma)\). This implies that studying magnetic geodesics of \((M, g, \sigma)\) with varying speeds is equivalent to studying unit-speed magnetic geodesics of \((M, g, s\sigma)\) for different values of \(s > 0\). This approach has the added benefit that:
\begin{itemize}
	\item For \(s = 0\), we recover standard geodesics.
	\item For \(s < 0\), we recover magnetic geodesics of \((M, g, \sigma)\) with reversed orientation.
\end{itemize}
In this context, the parameter \(s\) is referred to as the \emph{strength} of the magnetic geodesic.
	\end{rem}
\subsection{Mañé's Critical Value of infinite-dimensional magnetic systems}\label{Manes critical for Hilbert manifolds}
Let $(M,g)$ be a Hilbert manifold endowed with a Riemannian metric $g$, which may be either weak or strong. Let $\alpha$ be a one-form on $M$ and consider the exact magnetic system $(M,g,\d\alpha)$. In this case, the magnetic geodesic flow $\Phi_{g,\sigma}$ is the Euler--Lagrange flow $\Phi_L$ of the magnetic Lagrangian
\begin{equation}\label{magnetic Langrangian}
    L\colon TM\to\RR,\quad L(q,v):=\tfrac12|v|^2-\alpha_q(v),
\end{equation}
see \cite{Gin}. This means that $\gamma\colon[0,T]\to M$ is a magnetic geodesic if and only if $\gamma$ is a critical point of the action functional $S_L$ among all curves $\delta\in H^1\left([0,T],M\right)$ such that $\delta(0)=\gamma(0)$ and $\delta(T)=\gamma(T)$. Here the action functional is defined as
\[
S_L:H^1\left([0,T],M\right)\longrightarrow \RR: \gamma\mapsto S_L(\gamma):=\int_0^TL(\gamma(t),\dot\gamma(t))\d t.
\]
This variational principle prescribes the length $T$ of the time interval and leaves free the energy of $\gamma$. On the other hand, given $k\in\mathbb{R}$, $\gamma$ is a magnetic geodesic with energy $k$ if and only if $\gamma$ is a critical point of $S_{L+k}$ among all curves $\delta\colon [0,T']\to M$ of Sobolev class $H^1$ with $\delta(0)=\gamma(0)$ and $\delta(T')=\gamma(T)$ for an arbitrary $T'>0$. Now we can define Mañé's critical value of the Lagrangian $L$: 
\begin{defn}[Mañé's critical value]\label{Def: Mañé's critical value or Hilber manifolds} The Mañé's critical value $c(L)\in \mathbb{R}$ of $L$ is the smallest energy value such that the free-period action functional with energy $k$ is bounded from below (by zero) on the space of loops of Sobolev class $H^1$: \begin{equation}\label{d
} c(L)=\inf\left\{k\in\mathbb{R}\mid S_{L+k}(\gamma)\geq 0,\ \forall T>0,\ \forall\gamma\in H^1\left([0,T],M\right),\ \gamma(0)=\gamma(T)\right\}. \end{equation} \end{defn}
\begin{rems}\label{Rmk: Manes critical value inf dim system}
  \item \textbf{Mañé's critical value for exact magnetic systems.} Given an exact magnetic system \((M,g,\mathrm{d}\alpha)\), we can associate with it a unique Lagrangian \(L\) using \eqref{magnetic Langrangian}. Thus, we can define the Mañé critical value of the magnetic system \((M,g,\mathrm{d}\alpha)\) through the Mañé critical value of \(L\), i.e.,
  \[
  c(M,g,\mathrm{d}\alpha) := c(L).
  \]
  
  \item \textbf{Adaptation to general magnetic systems.} \Cref{Def: Mañé's critical value or Hilber manifolds} can be easily adapted for general magnetic systems in \Cref{d: Magnetic system}, just as it is done for finite-dimensional magnetic systems in the references \cite{AssBenLust16,CIPP,  Man, Merry2010}. This adaptation will be carried out in an upcoming work by the author \cite{HopfRinowHalfLiegroups, MaierRuscelliTonelli}, along with some illustrations, but is left out at this point for the sake of simplicity.
\end{rems}

This value is intrinsically difficult to calculate for general magnetic systems. Furthermore, according to Contreras' result \cite{Co06}, for magnetic systems \((M,g,\mathrm{d}\alpha)\) on closed finite-dimensional Riemannian manifolds, there exists a magnetic geodesic connecting two given points \(p,q \in M\) with a prescribed kinetic energy \(k\) when \(k>c(L)\). This implies an analogue of the Hopf--Rinow theorem for magnetic geodesics of the system \((M,g,\mathrm{d}\alpha)\) with a kinetic energy exceeding Mañé's critical value. Note, however, that for energy values below Mañé's critical value, even in finite dimensions, the validity of a Hopf--Rinow theorem remains uncertain. 
Considering the infinite-dimensional counterexamples to the Hopf--Rinow theorem \cite{HopfrinowfalseAktkin, HopfrinowfalseEkeland} and the positive results on Banach half-Lie groups by Bauer-Harms-Michor \cite{Bauer_2025}, this raises a pertinent question:
\begin{ques}
   For a magnetic system \((M,g,\mathrm{d}\alpha)\) with $L$ Lagrangian and Mañé's critical value $c(L)$, given two points \(p,q \in M\), does there exist a magnetic geodesic with prescribed kinetic energy \(k\) if \(k > c(L)\)?
\end{ques}
In this paper, we provide an answer to the question posed for an exact infinite-dimensional magnetic system (see \Cref{t: Manes critical value for (M2HS)}). We prove even more: the Mañé critical value is supercritical in the following sense. For each energy value \( k \) below Mañé's critical value \( c(L) \), i.e., \( k \leq c(L) \), there exist at least two points \( p \) and \( q \) such that no magnetic geodesic with prescribed energy \( k \) connects \( p \) and \( q \). Conversely, for all energy values \( k > c(L) \) and for any points \( p \) and \( q \), there always exists a magnetic geodesic with prescribed energy \( k \) connecting \( p \) and \( q \).
Even for finite-dimensional magnetic systems on closed manifolds, it remains an open question whether Mañé's critical value is supercritical in the sense described above. In the following, we will reduce the infinite dimensional magnetic systems to finite-dimensional dynamical systems, which are much better understood. This reduction will be made precise through the notion of totally magnetic submanifolds.  
\subsection{Magnetomorphism and totally magnetic submanifolds}
	We will see that a key feature of the magnetic system \((G,g,\varPhi^*\mathrm{d}\alpha)\) is its large group of symmetries and, as a byproduct, a large family of invariant submanifolds. In \cite[§6]{ABM23}, Albers--Benedetti, and the author introduced a framework to handle these situations in general. For the convenience of the reader, let us recall the key definitions. The symmetries of a general magnetic system \((M,g,\sigma)\) are defined by so-called \emph{magnetomorphisms} \(F\colon M \to M\), which are diffeomorphisms that preserve both \(g\) and \(\sigma\). To be precise:

\begin{defn}\label{magnetic morphism and iso}{\cite[Def. 6.1]{ABM23}}\label{d: magnetomorphism}
	Let \((M_1,g_1,\sigma_1)\) and \((M_2,g_2,\sigma_2)\) be magnetic systems. A \emph{magnetomorphism} \(\Phi\colon M_1 \to M_2\) is a diffeomorphism such that
	\[
	\Phi^*g_2 = g_1 \quad \text{and} \quad \Phi^*\sigma_2 = \sigma_1.
	\]
	The \emph{group of magnetomorphisms} of a magnetic system \((M,g,\sigma)\) is defined as 
	\[
	\mathrm{Mag}(M,g,\sigma) := \left\{\Phi : M \to M \mid \Phi^*g = g, \ \Phi^*\sigma = \sigma\right\}.
	\]
\end{defn}
\begin{rem}{\cite[Rmk. 6.3.]{ABM23}}\label{r: magnetormorphism act on lorentz force}
	If $\Phi\colon(M_1,g_1,\sigma_1)\to(M_2,g_2,\sigma_2)$ is a magnetomorphism and $Y_1$ and $Y_2$ represent the Lorentz forces of the two magnetic systems, then
\begin{equation}\label{e:Ymag}
	\d\Phi\cdot Y^1=Y^2_\Phi\cdot\d\Phi.
\end{equation}
\end{rem}
As a consequence, magnetomorphisms map magnetic geodesics to magnetic geodesics with the same energy, see \cite[Prop. 6.4]{ABM23}. On the other hand, invariant submanifolds are characterized by so-called \emph{totally magnetic submanifolds} \(N \subset M\), which ensure that any magnetic geodesic tangent to \(N\) remains locally contained within \(N\). To elaborate:

\begin{defn}\label{Definition totally magnetic submanifold}{\cite[Def. 6.5]{ABM23}}
	Let \((M, g_M, \sigma_M)\) be a magnetic system. An embedded submanifold \(N \subset M\) is called totally magnetic if, for all magnetic geodesics \(\gamma \colon I \to M\) such that \(\gamma(0) \in N\) and \(\dot\gamma(0) \in T_{\gamma(0)}N\), there exists \(\varepsilon > 0\) such that \(\gamma((-\varepsilon, \varepsilon)) \subset N\).
\end{defn}

Thus, magnetomorphisms send totally magnetic submanifolds to totally magnetic submanifolds.
When \(\sigma = 0\), totally magnetic submanifolds revert to the classical notion of totally geodesic submanifolds. In Riemannian geometry, there are many ways to define a totally geodesic submanifold. In particular, a submanifold is totally geodesic if and only if its second fundamental form \(\mathrm{II}\) vanishes. To provide a similar statement for magnetic subsystems, we introduce the \emph{magnetic second fundamental form} \(\mathrm{II}^{\mathrm{mag}}\), which combines the second fundamental form \(\mathrm{II}\) with the difference between the intrinsic and extrinsic Lorentz forces. Precisely:

\begin{defn}\label{magnetic fundamental form}{\cite[Def. 6.9.]{ABM23}}
	The tensor
	\[
	\mathrm{II}^{\mathrm{mag}}_q \colon T_pN \to T_qN^{\perp}, \quad \mathrm{II}^{\mathrm{mag}}_q(v) := \mathrm{II}_q(v) + Y_q^Mv - Y_q^Nv \quad \forall q \in N,
	\]
	is called the \emph{magnetic second fundamental form} of the submanifold \(N\) within the magnetic system \((M, g_M, \sigma_M)\).
\end{defn}

Now, we can transform the local definition of totally magnetic submanifolds into an infinitesimal characterization and provide several equivalent definitions of totally magnetic submanifolds:
    	\begin{thm}\label{t:totmag}{\cite[Thm. 1.4]{ABM23}}
Let $(M,g_M,\s_M)$ be a magnetic system. Let $N\subset M$ be a closed, embedded submanifold. Denote by $\iota\colon N\to M$ the inclusion map and by $g_N:=\iota^*g_M$ and $\s_N:=\iota^*\s_M$ the pullback metric and magnetic field. The following statements are equivalent:\begin{enumerate}
			\item The submanifold $N$ is totally magnetic in $(M,g_M,\s_M)$.
			\item If $\gc$ is a magnetic geodesic in $\left(N,g_N,\s_N\right)$ then $\iota\circ\gc$ is a magnetic geodesic in $\left(M,g_M,\s_M\right)$.
			\item The magnetic second fundamental form of $\left(N,g_N,\s_N\right)$ vanishes identically: 
			\[
			\II^{\mathrm{mag}}_q(v)=0\quad \forall(q,v)\in TN.
			\]
			\item The second fundamental form of $(N,g_N)$ vanishes identically 
			\[
			\II_q(v)=0\quad \forall (q,v)\in TN.
			\]
			and one of the following three equivalent conditions holds
			\begin{enumerate}
				\item the Lorentz force $Y^M$ of $(M,g_M,\s_M)$ along $N$ is equal to the Lorentz force $Y^N$ of $(N,g_N,\s_N)$: 
				\[
				Y^N_qv= Y^M_qv\quad \forall (q,v)\in TN;
				\]
				\item the Lorentz force $Y^M$ leaves the tangent bundle of $N$ invariant: 
				\[
				Y_q^Mv\in TN \quad\forall(q,v)\in TN;
				\] 
				\item the $g_M$-orthogonal of $TN$ is contained in the $\s_M$-orthogonal of $TN$:
				\[(\sigma_M)_q(u,v)=0 \quad \forall q\in N,\ u \in T_q N,\ v\in T_q N^{\perp}. 
				\]
			\end{enumerate}
		\end{enumerate}
	\end{thm}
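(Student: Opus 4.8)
The plan is to route every equivalence through a single computation: a magnetic analogue of the Gauss formula. For a curve $\gamma$ lying in $N$ I would decompose the extrinsic magnetic acceleration into its components tangent and normal to $N$. Combining the classical Gauss formula $\nabla^M_{\dot\gamma}\dot\gamma = \nabla^N_{\dot\gamma}\dot\gamma + \mathrm{II}(\dot\gamma,\dot\gamma)$ with the elementary identity $(Y^M_q v)^\top = Y^N_q v$ for $v\in T_qN$ — which is immediate from $g_M(Y^M_q v, w) = \sigma_M(v,w) = \sigma_N(v,w) = g_N(Y^N_q v, w)$ for all $w\in T_qN$ — one obtains
\begin{equation*}
\nabla^M_{\dot\gamma}\dot\gamma - Y^M_\gamma\dot\gamma = \underbrace{\left(\nabla^N_{\dot\gamma}\dot\gamma - Y^N_\gamma\dot\gamma\right)}_{\in\, TN} + \underbrace{\left(\mathrm{II}(\dot\gamma,\dot\gamma) - (Y^M_\gamma\dot\gamma)^\perp\right)}_{\in\, TN^\perp}.
\end{equation*}
Thus the tangential component of the $M$-magnetic geodesic operator is exactly the intrinsic $N$-magnetic geodesic operator, and its normal component is $\mathrm{II}(\dot\gamma,\dot\gamma) - (Y^M_\gamma\dot\gamma)^\perp$, agreeing up to the sign of the Lorentz term with $\mathrm{II}^{\mathrm{mag}}_\gamma(\dot\gamma)$.

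I would first dispatch the purely algebraic equivalence $(3)\Leftrightarrow(4)$. Evaluating $\mathrm{II}^{\mathrm{mag}}_q(tv) = t^2\,\mathrm{II}_q(v,v) + t\,(Y^M_q v)^\perp$ and comparing powers of $t$ shows that $\mathrm{II}^{\mathrm{mag}}$ vanishes identically on $TN$ if and only if the quadratic piece $\mathrm{II}_q(v,v)$ and the linear piece $(Y^M_q v)^\perp$ vanish separately; by polarization the former is $\mathrm{II}\equiv 0$. The vanishing of $(Y^M_q v)^\perp$ on $TN$ is then seen to be one and the same as each of $(4a)$, $(4b)$, $(4c)$: the splitting $Y^M_q v = Y^N_q v + (Y^M_q v)^\perp$ gives $(4a)\Leftrightarrow(4b)\Leftrightarrow\{(Y^M_q v)^\perp = 0\}$, while pairing against a normal $w$ via $g_M(Y^M_q v, w) = \sigma_M(v,w)$ turns $(Y^M_q v)^\perp = 0$ into $(4c)$.

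Next I would extract $(2)\Leftrightarrow(3)$ and $(1)\Leftrightarrow(2)$ from the displayed decomposition together with existence and uniqueness for the magnetic geodesic equation. For $(3)\Rightarrow(2)$: if $\mathrm{II}^{\mathrm{mag}}\equiv 0$ then both normal pieces vanish, so for an $N$-magnetic geodesic the right-hand side is zero and $\iota\circ\gamma$ solves the $M$-equation. For $(2)\Rightarrow(3)$: running the $N$-magnetic geodesic through an arbitrary $(q,v)\in TN$, vanishing of the normal defect at $t=0$ gives $\mathrm{II}_q(v,v) - (Y^M_q v)^\perp = 0$; letting $(q,v)$ vary and separating degrees again yields $(3)$. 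For $(2)\Rightarrow(1)$: an $M$-magnetic geodesic $\gamma$ tangent to $N$ at $t=0$ is compared with $\iota$ of the $N$-magnetic geodesic sharing its initial data, which by $(2)$ also solves the $M$-equation, so uniqueness forces them to coincide and $\gamma$ stays in $N$ for short time. Conversely $(1)\Rightarrow(2)$: an $M$-magnetic geodesic issued from $(q,v)\in TN$ stays in $N$ by total magnetism, hence its tangential defect — the intrinsic operator — vanishes, making it an $N$-magnetic geodesic; uniqueness on $N$ identifies it with the prescribed one, and localizing at every time of the domain shows $\iota\circ\gamma$ is an $M$-magnetic geodesic throughout.

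The main obstacle is not the algebra but the analytic input of the last step: existence and uniqueness of the magnetic geodesic flow, both intrinsically on $N$ and extrinsically on $M$, so that the comparison arguments are legitimate. On a Hilbert manifold with a \emph{strong} metric this is automatic, since the magnetic spray is a smooth vector field on $TM$ and on $TN$ and Picard–Lindelöf applies; the embeddedness and closedness of $N$, together with the existence of the $g_M$-orthogonal splitting $TM|_N = TN\oplus TN^\perp$, guarantee that $TN$ and $TN^\perp$ are genuine subbundles and that a short-time excursion remaining in $N$ is a smooth curve in $N$. For weak metrics one must assume, or separately establish, well-posedness of the magnetic geodesic equation and the existence of the orthogonal complement; in the applications of this paper the metric on $\SiL$ is strong, so this subtlety does not interfere with the use of the theorem.
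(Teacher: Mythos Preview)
The paper does not prove this theorem; it is quoted verbatim from \cite[Thm.~1.4]{ABM23} and used as a black box. There is therefore no in-paper proof to compare against. That said, your argument is correct and is essentially the natural one: the magnetic Gauss decomposition
\[
\nabla^M_{\dot\gamma}\dot\gamma - Y^M_\gamma\dot\gamma
=\bigl(\nabla^N_{\dot\gamma}\dot\gamma - Y^N_\gamma\dot\gamma\bigr)
+\bigl(\mathrm{II}(\dot\gamma,\dot\gamma)-(Y^M_\gamma\dot\gamma)^\perp\bigr)
\]
together with the homogeneity trick $t\mapsto tv$ to separate the quadratic piece $\mathrm{II}$ from the linear piece $(Y^M v)^\perp$, and existence--uniqueness for the magnetic spray, is exactly how one would expect the cited proof in \cite{ABM23} to run. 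Your handling of the sign discrepancy between the normal defect and $\mathrm{II}^{\mathrm{mag}}$ is fine, since the degree separation makes the two vanishing conditions equivalent. The caveat you flag about weak metrics is appropriate and matches the paper's own usage, where the ambient $L^2$-metric on $\SiL$ is strong.
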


 \section{The magnetic system on \texorpdfstring{$\SiL$}{SiL}}\label{S: 3}
 In this section, we will investigate the magnetic system on an infinite-dimensional sphere in full depth. We compute its Mañé critical value and prove that it is supercritical in the sense described at the end of \Cref{Manes critical for Hilbert manifolds}. Consequently, we obtain a Hopf--Rinow type theorem for this infinite-dimensional magnetic system, which is contingent upon whether the prescribed energy is above Mañé's critical value. The theorem fails if the prescribed energy is below Mañé's critical value. To prove this, we provide a dynamical reduction theorem to a totally magnetic three-sphere. The magnetic system on \(\mathbb{S}^3\) is fully understood by \cite{ABM23}. We recall the following all results that are important to us:

  \subsection{Toy model: the magnetic system on $\Sd$}\label{subsection the toymodel Sd} Let us begin with introducing the setting in detail. Here the manifold $\Sd$ is the sphere of radius $1$ in $\C^{2}$ with standard Hermitian product $\langle\cdot,\cdot\rangle$, the metric  $g=\mathrm{Re}\langle\cdot,\cdot\rangle$ is the restriction of the Euclidean metric to $\Sd$, and the magnetic potential $\alpha$ is the standard contact form on $\Sd$, that is, $\alpha_z=\tfrac12\mathrm{Re}\langle \mathrm iz,\cdot\rangle$ for all $z\in\Sd$. The Reeb vector field of $\alpha$ is the unique vector field $R$ on $\Sd$ such that $\d\alpha(R,\cdot)=0$ and $\alpha(R)=1$. In this case, we get $R_z=2\mathrm iz$. The trajectories of the flow 
	\begin{equation}\label{e:hopfflow}
	\Phi_R^t(z)=e^{2\mathrm it}z,\qquad \forall\, t\in\RR,\ z\in\Sd
	\end{equation}
	are the fibers of the Hopf map $\pi\colon \Sd\to \mathbb CP^1$, which sends each point on $\Sd$ to the complex line through it. This is the simplest example of a Zoll Reeb flow, where all orbits of the Reeb vector field are periodic and with the same minimal period \cite{ABE24,APB}. Finally, $\ker\alpha$ is the contact distribution of $\alpha$ and, in our case, coincides with the orthogonal of $R$, for more detail on contact geometry we refere to \cite{Gg08}. Now we are in a position to state a special case of the main theorem in \cite{ABM23}, which computes the Mañé critical value of the magnetic system \((\mathbb{S}^3, g, \mathrm{d}\alpha)\) and proves that it is supercritical: 
    	\begin{prop}\label{t:mane}{\cite[Thm. 1.1]{ABM23}}
	The Mañé's critical value of the system is
	\[
	c(\Sd,g,\d\alpha)=\tfrac{1}{2}\Vert\alpha\Vert_\infty^2=\tfrac18.
	\]
	Let $q_0$ and $q_1$ be two points on $\Sd$ and denote by $\langle q_0,q_1\rangle$ their Hermitian product. For every $k>0$, let $\mathcal G_k(q_0,q_1)$ be the set of magnetic geodesics with energy $k$ connecting $q_0$ and $q_1$. We have the following three cases
	\begin{enumerate}
		\item if $k>\tfrac18$, then $\mathcal G_k(q_0,q_1)\neq\varnothing$;
		\item if $k=\tfrac18$, then $\mathcal G_k(q_0,q_1)\neq\varnothing$ if and only if $\langle q_0,q_1\rangle\neq0$;
		\item if $k<\tfrac18$, we have the following three subcases 
		\begin{enumerate}
			\item if $|\langle q_0,q_1\rangle|> \sqrt{1-8k}$, then $\mathcal G_k(q_0,q_1)\neq\varnothing$;
			\item if $|\langle q_0,q_1\rangle|=\sqrt{1-8k}$, then there are $a_k,b_k\in\RR$ with $b_k>0$ such that $\mathcal G_k(q_0,q_1)\neq\varnothing$ if and only if $\langle q_0,q_1\rangle=e^{\mathrm i(a_k+mb_k)}\sqrt{1-8k}$ for some $m\in\ZZ$;
			\item if $|\langle q_0,q_1\rangle|< \sqrt{1-8k}$, then $\mathcal G_k(q_0,q_1)=\varnothing$.
		\end{enumerate}
	\end{enumerate}
	\end{prop}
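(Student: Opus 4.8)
The plan is to integrate the magnetic geodesic equation on $\Sd$ completely explicitly and then read off both the critical value and the connectivity dichotomy from the resulting closed form. First I would make the Lorentz force concrete. Since $\alpha=\tfrac12\operatorname{Re}\langle i z,\cdot\rangle$ has $\d\alpha$ equal to the restriction of the standard symplectic form $\operatorname{Re}\langle i\,\cdot,\cdot\rangle$ of $\mathbb{C}^2$, the defining relation \eqref{e:Lorentz} gives $Y_z v=iv-\operatorname{Re}\langle iv,z\rangle\,z$, the $g$-orthogonal projection of $iv$ onto $T_z\Sd$. Writing a curve as $\gamma\colon\mathbb{R}\to\mathbb{C}^2$ with $|\gamma|\equiv1$ and using $\nabla_{\dot\gamma}\dot\gamma=\ddot\gamma+|\dot\gamma|^2\gamma$, the equation \eqref{e:mg} becomes the second-order ODE $\ddot\gamma=i\dot\gamma+\bigl(2\alpha_\gamma(\dot\gamma)-2k\bigr)\gamma$, where $k=\tfrac12|\dot\gamma|^2$ is the conserved energy. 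Differentiating the momentum $\beta:=\alpha_\gamma(\dot\gamma)$ and substituting the ODE shows $\beta$ is constant (Noether's theorem for the Hopf $S^1$-symmetry), so the equation is linear with constant coefficients. Its characteristic roots are $i\omega_\pm$ with $\omega_\pm=\tfrac12\pm\nu$ and $\nu=\tfrac12\sqrt{1+8k-8\beta}$, and imposing $|\gamma|\equiv1$ forces the roots to be purely imaginary and the amplitudes to be Hermitian-orthogonal. Thus every magnetic geodesic of energy $k$ has the normal form $\gamma(t)=e^{i\omega_+t}a+e^{i\omega_-t}b$ with $\langle a,b\rangle=0$, $|a|^2+|b|^2=1$, $\nu\ge0$ and $2k=\omega_+^2|a|^2+\omega_-^2|b|^2$; the degenerate value $\nu=0$ gives exactly the Hopf (Reeb) circles $e^{it/2}a$.

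For the critical value I would argue directly from \Cref{Def: Mañé's critical value or Hilber manifolds}. The pointwise bound $\tfrac12|v|^2-\alpha_q(v)+k\ge k-\tfrac12\|\alpha\|_\infty^2$ forces $S_{L+k}\ge0$ as soon as $k\ge\tfrac12\|\alpha\|_\infty^2=\tfrac18$ (here $\|\alpha_z\|=\tfrac12$ for every $z$), so $c(L)\le\tfrac18$. For the reverse inequality I would evaluate the free-period action on a Hopf circle traversed at constant speed $v$: since $\int_\gamma\alpha=\pi$ and its length is $2\pi$, one obtains $S_{L+k}=\pi\bigl(v+2k/v-1\bigr)$, minimized at $v=\sqrt{2k}$ to the value $\pi(2\sqrt{2k}-1)$, which is negative exactly when $k<\tfrac18$. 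Hence $c(\Sd,g,\d\alpha)=c(L)=\tfrac18=\tfrac12\|\alpha\|_\infty^2$.

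For connectivity I would use that $\mathrm{U}(2)\subset\mathrm{Mag}(\Sd,g,\d\alpha)$ acts transitively on pairs with fixed Hermitian product, so everything depends only on $w:=\langle q_0,q_1\rangle$. Taking $\gamma(0)=q_0=a+b$ and using $\langle a,b\rangle=0$ yields the endpoint formula $w=|a|^2e^{-i\omega_+T}+|b|^2e^{-i\omega_-T}=e^{-iT/2}\bigl(\cos\phi-im\sin\phi\bigr)$ with $m=|a|^2-|b|^2$ and $\phi=\nu T$; the energy constraint pins $m=m(\nu)=\bigl((2k-\tfrac14)-\nu^2\bigr)/\nu$ and restricts $\nu$ to the interval on which $m(\nu)\in[-1,1]$. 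Since $|w|^2=\cos^2\phi+m(\nu)^2\sin^2\phi$ sweeps $[\,m(\nu)^2,1]$ as $T$ varies, the attainable moduli form $[\mu_k,1]$ with $\mu_k=\min_\nu|m(\nu)|$. An elementary AM--GM computation gives $\mu_k=0$ for $k>\tfrac18$ (attained at $\nu=\sqrt{2k-\tfrac14}$), $\mu_k=0$ as an unattained infimum for $k=\tfrac18$ (the only $|w|=1$ orbit being the Hopf circle), and $\mu_k=2\sqrt{\tfrac14-2k}=\sqrt{1-8k}$ for $k<\tfrac18$ (attained at $\nu=\sqrt{\tfrac14-2k}$). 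Matching moduli to these regimes produces cases (1)--(3): all $w$ for $k>\tfrac18$; all $w\neq0$ for $k=\tfrac18$; and $|w|\ge\sqrt{1-8k}$ for $k<\tfrac18$.

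The hard part will be the phase analysis upgrading these modulus statements to the exact sets. For a modulus strictly inside $(\mu_k,1)$ I would show every argument is realized by letting $\nu$ vary near a minimizer while choosing successive branches $\phi\in n\pi\pm\phi_0$: because $\partial_\nu\arg w$ grows like $n/\nu^2$, arbitrarily short $\nu$-intervals sweep a full circle of phases, giving surjectivity onto the open annulus and hence cases (1) and (3a). At the extremal modulus $\sqrt{1-8k}$ the minimum is realized only at the single frequency $\nu=\sqrt{\tfrac14-2k}$ and only when $\phi\equiv\tfrac\pi2\pmod{\pi}$, which forces $T$ into an arithmetic progression, so $\arg w$ runs through a discrete set $a_k+mb_k$, precisely case (3b); moduli below the threshold are unattainable, giving (3c). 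Controlling this phase-covering rigorously---openness of the endpoint map off the degenerate Reeb locus together with the winding estimate---is the main technical obstacle, whereas the modulus bookkeeping and the critical-value computation are comparatively routine.
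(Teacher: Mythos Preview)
The paper does not supply its own proof of this proposition: it is quoted verbatim as \cite[Thm.~1.1]{ABM23} and used as a black box. So there is no in-paper argument to compare yours against; any assessment has to be on the merits of your sketch and its consistency with the surrounding material (the explicit solution formula \eqref{e: explicit form of magnetic geodesics on S3} and the Lorentz force \eqref{e:Lorenz force on SIl}, which your set-up reproduces correctly).

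Your derivation of the ODE, the conservation of $\beta$, the normal form $\gamma(t)=e^{i\omega_+t}a+e^{i\omega_-t}b$ with $\langle a,b\rangle=0$, the energy constraint $m(\nu)=\bigl((2k-\tfrac14)-\nu^2\bigr)/\nu$, and the identity $|w|^2=\cos^2\phi+m(\nu)^2\sin^2\phi$ are all correct and match the formulas the paper imports from \cite{ABM23}. The Ma\~n\'e critical value computation is also fine and is exactly the mechanism the paper uses for the infinite-dimensional sphere in \Cref{L: Mane crit value for SIL}. The modulus bookkeeping (AM--GM giving $\mu_k=\sqrt{1-8k}$ for $k<\tfrac18$, zero for $k\ge\tfrac18$, unattained at $k=\tfrac18$) is right on the nose.

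The one genuine gap is the step you yourself flag: upgrading ``the right moduli occur'' to ``every argument occurs at each admissible modulus.'' Your heuristic---let $\nu$ vary near a minimiser and exploit that the phase increment per branch is $\sim n\pi/(2\nu)$---gives density of phases, not surjectivity, and you still need an openness/properness statement for the endpoint map $(\nu,T)\mapsto w$ away from the Reeb locus to close the argument. A cleaner route, and the one the reference presumably takes, is to fix $q_0$ and parametrise magnetic geodesics by $(\psi,T)$ (contact angle and time) so that $w$ becomes an explicit smooth function on a cylinder; then the image is a union of annuli whose inner radii you have already computed, and the phase surjectivity follows from the fact that on each level set $|w|=r$ the map $T\mapsto\arg w$ has nonvanishing derivative except at the extremal modulus, where the arithmetic-progression statement in (3b) drops out directly from $T\in\tfrac{\pi}{2\nu}(2\mathbb Z+1)$ at the unique minimising $\nu$. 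So your outline is correct, but the phase step needs a sharper formulation than a winding estimate to be a proof.
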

    \begin{rems} 
     Indeed, the Mañé critical value is supercritical in the sense of the end of  \Cref{Manes critical for Hilbert manifolds}, i.e., the magnetic system \((M,g,\mathrm{d}\alpha)\) satisfies a Hopf--Rinow theorem if and only if the energy value is greater than the Mañé critical value.
    \end{rems}
   Before we move on, let us mention that the magnetic system \((\mathbb{S}^3, g, \mathrm{d}\alpha)\) is particularly complex because it encapsulates all magnetic systems on \(\mathbb{S}^2\) at once, as shown in \cite[Thm. 1.12, Cor. 1.14]{ABM23}. This makes it significantly more complicated than the classical magnetic system on the round two-sphere. The implications of this complexity and the meaning of \Cref{t:mane} are illustrated in \Cref{magnetic flow S3-> S2}.

 	\begin{figure}[htbp]
		
		\includegraphics[width=0.7\textwidth]{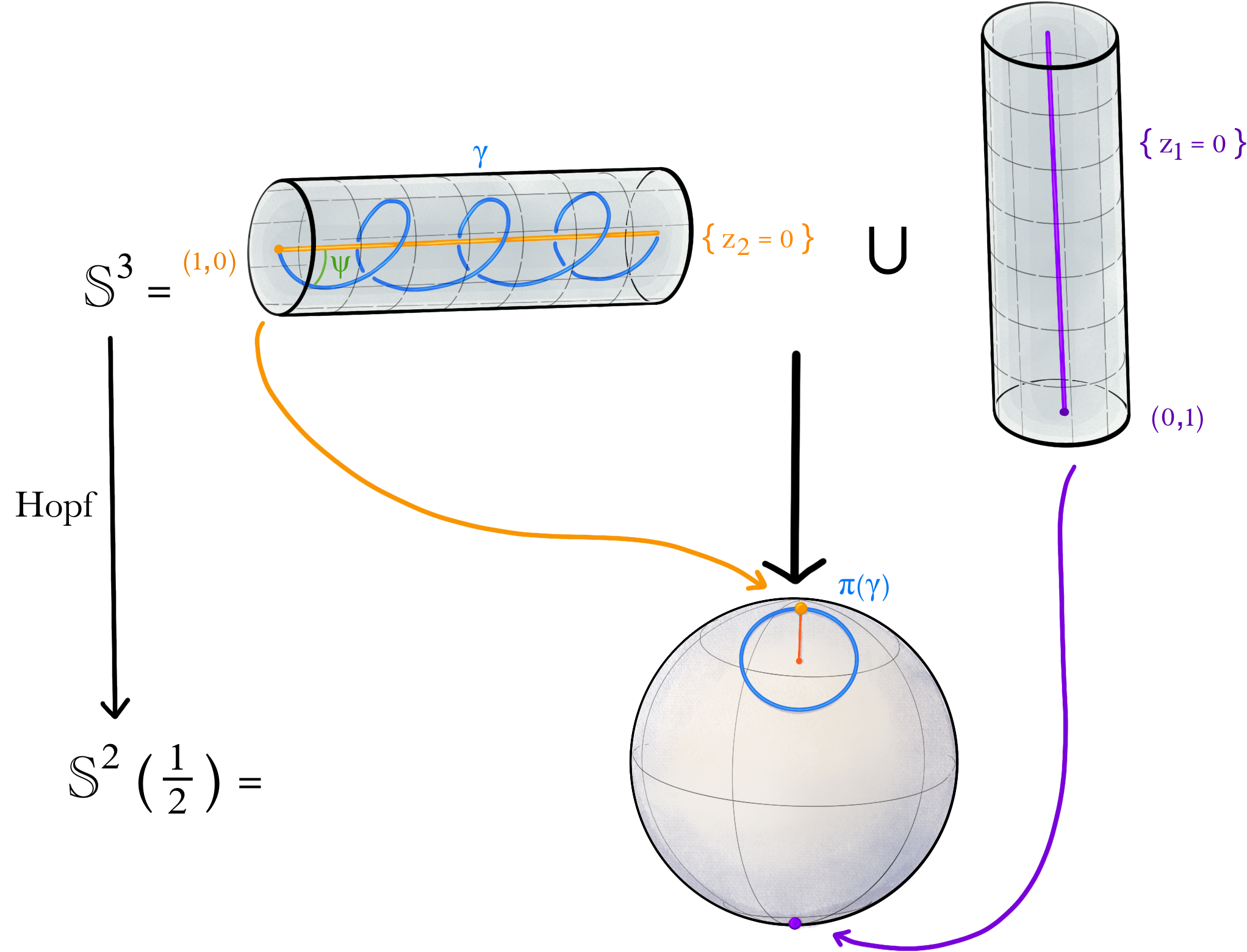}
		\caption{This picture illustrates \cite[Thm. 1.12]{ABM23} and the fact that we can connect the Reeb orbit \textcolor{orange}{$\{z_2=0\}$} and the Reeb orbit \textcolor{purple}{$\{z_1=0\}$} with a unit speed magnetic geodesic of prescribed energy $k$ if and only if $k>\frac{1}{8}$, see Theorem \ref{t:mane}. (Picture by Ana Chavez Caliz.)}
		\label{magnetic flow S3-> S2}
	\end{figure}
\subsection{The magnetic system $(\SiL, \GL, \d\alpha)$}
In this subsection, the magnetic system $(\Sd, g, \d\alpha)$ is generalized to an infinite-dimensional setting, and a dynamical reduction theorem is provided in the sense of totally magnetic submanifolds. This allows for the reduction of the dynamics of the infinite-dimensional system to $(\Sd, g, \d\alpha)$. The setting is introduced in detail as follows: The construction of the $(\Sd, g, \d\alpha)$ magnetic system relies on the fact that $\Sd$ is a hypersurface in the ambient space $\CC^2$, which constitutes a complex Hilbert space. Leveraging the Hilbert space structure of $\CC^2$, one can define the round metric and the standard contact form on $\Sd$, as detailed in \cref{subsection the toymodel Sd}.

Consequently, it is logical to replace the Hilbert space $\CC^2$ with the classical Hilbert space $L^2(\SS^1, \CC)$, consisting of square integrable complex-valued functions. This space is equipped with the standard $L^2$ product $\langle \cdot, \cdot \rangle_{L^2}$, which induces the $L^2$ norm denoted by $\Vert \cdot \Vert_{L^2}$. The sphere of radius one in $L^2(\SS^1, \CC)$ is denoted by $\SiL$, defined as
\[
    \SiL:= \left\{f \in L^2(\SS^1, \CC) : \Vert f \Vert_{L^2} = 1\right\}.
\]
Again, the round metric $g = \Re \langle \cdot, \cdot \rangle$ is the restriction of the Euclidean metric on $L^2(\SS^1, \CC)$ to $\SiL$. Before defining the magnetic potential, it should be noted that, as in the finite-dimensional setting, the tangent space of $\SiL$ at any point $f$ is given by
\[
T_f\SiL = \left\{ F \in L^2(\SS^1, \CC) : \Re\langle f, F \rangle_{L^2} = 0 \right\}.
\]
Utilizing the Hilbert space structure of $L^2(\SS^1, \CC)$, the magnetic potential is defined similarly to the case of the toy model:
\begin{defn}\label{def:contact_form_infinite_dimension}
    The standard contact form $\alpha$ on $\SiL$ is defined as the linear functional: 
    \[
    \alpha_{f} : T_{f}\SiL \longrightarrow \RR : F \mapsto \frac{1}{2} \Re\langle i f, F \rangle_{L^2} \quad \forall (f, F) \in T\SiL.
    \]
\end{defn}
\begin{rem}
   By its definition $\alpha_{f}$ is a bounded linear operator in each point $f\in \SiL$, thus it is indeed a one-form on $\SiL$. For more details on differential forms on Hilbert manifolds, we refer to \cite{la99}.
\end{rem}
\begin{rem}
    To the best of the authors knowledge, there is no uniform definition of a contact form in the infinite-dimensional setting. We use a naive approach by defining a tangent distribution \(\xi := \ker \alpha\) and demonstrate that this distribution behaves similarly to that in the finite-dimensional setting. Alternatively, one could use the approach involving a so-called Liouville vector field, as described by Abbondandolo-Majer \cite{AM15}.
\end{rem}
Note before we move on that as in the finite dimensional setting, the derivative of the standard contact form on $\SiL$ is equal to the restriction of $\Im\langle \cdot, \cdot \rangle$ to the sphere, here $\SiL$. There exist a unique vector field $R$ defined through $\d\alpha(R, \cdot)=0$ and $\alpha(R)=1$, called the \emph{Reeb vector field} of $\alpha$. It's an exercise left to the reader that in this case we get $R_f=2\i f$. The trajectories of the flow of $R$
\[
\varPhi^t_R(f)= e^{2\i t} \cdot f\quad t\in \RR,\ f\in\SiL
\]
are the fibers of the infinite dimensional Hopfmap $\pi:\SiL\longrightarrow \CP$, which sends each point on $\SiL$ to the complex line through it. As in the finite dimensional setting we define
\begin{defn}\label{standard comtact structure on SiL}
 The distribution $\xi:=\ker \alpha$ is called the \emph{standard contact structure} on $\SiL$.
\end{defn}
In order to see that this definition is well-defined we have to prove 
\begin{prop}\label{p: contact distri on SIL has codim one}
    The contact distribution has codimension one in $T\SiL$ as a bundle and $T\SiL$ possesses the orthogonal splitting \[
    T\SiL= \xi \oplus \langle R \rangle_{\RR}.
    \]
\end{prop}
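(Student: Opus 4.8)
The plan is to prove both assertions fiberwise and then upgrade them to the bundle level by smoothness. At each $f\in\SiL$ the tangent space $T_f\SiL$, equipped with $g_f=\Re\langle\cdot,\cdot\rangle_{L^2}$, is a closed subspace of the real Hilbert space $(L^2(\SS^1,\CC),\Re\langle\cdot,\cdot\rangle_{L^2})$ and hence a real Hilbert space in its own right. I would first identify $\xi_f=\ker\alpha_f$ as the $g_f$-orthogonal complement of the Reeb line $\langle R_f\rangle_\RR$; the codimension-one statement and the orthogonal splitting then both follow from the projection theorem in Hilbert space.

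First I would record the two elementary facts about $R_f=2\i f$. It is tangent to $\SiL$, since $\Re\langle f,2\i f\rangle_{L^2}=2\,\Re(-\i\|f\|_{L^2}^2)=0$, and it is nowhere zero of constant norm, $g_f(R_f,R_f)=\Re\langle 2\i f,2\i f\rangle_{L^2}=4\|f\|_{L^2}^2=4$. The computational heart of the argument is then the identity, valid for all $F\in T_f\SiL$,
\[
g_f(R_f,F)=\Re\langle 2\i f,F\rangle_{L^2}=2\,\Re\langle \i f,F\rangle_{L^2}=4\,\alpha_f(F),
\]
which exhibits $g_f(R_f,\cdot)$ as a nonzero multiple of $\alpha_f$. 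Since $\alpha_f$ is a bounded functional (as noted after \Cref{def:contact_form_infinite_dimension}) which is nonzero---indeed $\alpha_f(R_f)=1$---its kernel $\xi_f$ is a closed hyperplane, establishing the codimension-one claim fiberwise; moreover the displayed identity gives $\xi_f=\{F:g_f(R_f,F)=0\}=(R_f)^{\perp_{g_f}}$.

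Having identified $\xi_f=(R_f)^{\perp_{g_f}}$, the orthogonal decomposition $T_f\SiL=\xi_f\oplus\langle R_f\rangle_\RR$ is exactly the splitting of the Hilbert space $T_f\SiL$ into the closed line $\langle R_f\rangle_\RR$ and its orthogonal complement, with the projection onto the Reeb direction given explicitly by $F\mapsto\tfrac14\,g_f(R_f,F)\,R_f$ (using $\|R_f\|_{L^2}^2=4$). To promote this to a splitting of bundles, I would observe that $R$ is a smooth, nowhere-vanishing section of $T\SiL$, so $\langle R\rangle_\RR$ is a smooth trivial line subbundle, while $\xi=\ker\alpha$ is the corank-one subbundle cut out by the smooth nowhere-vanishing one-form $\alpha$; the two complementary orthogonal projections $F\mapsto\tfrac14 g(R,F)R$ and $F\mapsto F-\tfrac14 g(R,F)R$ are smooth in $f$ because $g$ and $R$ are, yielding a smooth bundle splitting.

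I expect no genuine difficulty beyond keeping the infinite-dimensional bookkeeping honest. The two points that must not be glossed over are that the codimension-one statement requires $\xi_f$ to be \emph{closed}, which uses boundedness of $\alpha_f$ and guarantees both that the complement is well defined and that the orthogonal projection exists, and that the splitting is continuous in $f$ rather than merely fiberwise, which is immediate from the explicit projection formula.
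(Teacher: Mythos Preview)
Your argument is correct and more detailed than the paper's. The paper proceeds abstractly: it observes that $\alpha_f$ is a bounded surjective linear functional (since $\alpha_f(R_f)=1$), so $\xi_f=\ker\alpha_f$ is closed and the first isomorphism theorem gives $T_f\SiL/\xi_f\cong\RR$, which is the codimension-one statement; the orthogonal splitting is left implicit. You instead compute the concrete identity $g_f(R_f,\cdot)=4\alpha_f(\cdot)$, which simultaneously identifies $\xi_f$ as the $g_f$-orthogonal complement of $R_f$ and yields both the codimension claim and the orthogonal decomposition in one stroke, together with the explicit projection formulas. Your route is slightly longer but more informative: it actually proves the second assertion of the proposition rather than leaving it tacit, and it makes the smoothness of the splitting transparent.
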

\begin{proof}
    By its definition the image $\mathrm{im} (\alpha_f)$ is of dimension one at each $f\in \SiL$, since $\alpha (R)=1$. So $\alpha_f$ is surjective at each $f\in \SiL$ and since $\xi=\ker\alpha$ is closed by definition so by the fundamental homomorphism theorem in the category of Hilbert spaces we get the following diagram 
    \begin{align*}
			\xymatrix{
				T_{f}\SiL\ar@{->>}[dr]\ar@{->>}[rr]
				^{\alpha_{f}} &   &\RR \\
				& T_{f}\SiL/\xi_{f}\ar@{.>}[ru]
				^{\cong} & }
		\end{align*} 
        commutes for all $f\in \SiL$, which finishes the proof.
\end{proof}
We now aim to describe the magnetic system \((\SiL, \GL, \mathrm{d}\alpha)\) in more detail. Following the approach of \cite[§3]{ABM23}, we can derive the magnetic geodesic equation of \((\SiL, \GL, \mathrm{d}\alpha)\). As in \cite{ABM23}, the Levi-Civita connection evaluated along a unit speed magnetic geodesic is given by 
\begin{equation}\label{e:Levi cevita on SIl}
    \nabla_{\dot{\gamma}}\dot{\gamma}=\ddot{\gamma}+\gamma.
\end{equation}
On the other hand, the Lorentz force is explicitly given by a rotation of 90 degrees of the orthogonal projection \(\pi_{\xi}\) of the velocity to the contact distribution, i.e.,
\begin{equation}\label{e:Lorenz force on SIl}
    Y_f(F)= \i \pi_{\xi}(h)= \i \cdot\left(F- \Re\langle \i f, F\rangle_{L^2} \cdot\i f\right) \quad \forall (f,F) \in T\SiL.
\end{equation}
 By \eqref{e:mg}, \eqref{e:Levi cevita on SIl}, and \eqref{e:Lorenz force on SIl}  specifically, a curve \(\gamma\) is a magnetic geodesic on \((\SiL, \GL, \mathrm{d}\alpha)\) with unit speed and of magnetic strength \(s\) if and only if \(\gamma\) satisfies
\begin{equation}\label{e:magnetic geodesic equation in SIL}
    \ddot{\gamma} - \i s \dot{\gamma} + \left(1 - s \cos \psi \right) \gamma = 0,
\end{equation}
where \(\cos \psi\) is a conserved quantity representing the angle between \(\dot{\gamma}\) and \(R\), given by
\begin{equation}\label{e: contact angle}
\cos \psi = \operatorname{Re} \langle  \i\gamma ,\dot{\gamma}\rangle_{L^2}.
\end{equation}

\begin{rem}
    The derivation of the magnetic geodesic equation in $\SiL$, as referred to in \eqref{e:magnetic geodesic equation in SIL}, is left as an exercise for the interested reader, note that it suffices to replace \(\S\) with \(\SiL\) in \cite[§3]{ABM23}.
\end{rem}
 \subsection{Dynamical reduction from $\Sil$ to $\Sd$} The aim of this subsection is to provide a dynamical reduction theorem, i.e., the dynamics of the magnetic system $\left(\SiL, \GL, \d\alpha \right)$ can be reduced to the magnetic system $\left(\Sd, g, \d\alpha \right)$. More precisely, we prove that the dynamics of the infinite-dimensional sphere can always be reduced to a totally magnetic three-dimensional sphere. This result represents a special case of a much more general perspective, as it follows directly from the classification of totally magnetic submanifolds of the magnetic system $\left(\SiL, \GL, \d\alpha \right)$:

\begin{prop}\label{prop: Classification of totally magnetic submanifolds of SiL}
    The group of magnetomorphisms of $\left(\SiL, \GL, \d\alpha \right)$ is the group $\mathcal{U}_{L^2}$ of unitary operators of the Hilbert space $L^2\left(\SS^1, \CC \right)$, that is, 
    \[
    \mathrm{Mag}\left( \SiL, \GL, \d\alpha \right) = \mathcal{U}_{L^2}.
    \]
    The lift of the magnetomorphism group to $T\SiL$ is Hamiltonian with respect to the twisted symplectic form $\omega_{\d\alpha}$, defined as in \eqref{e: twisted symplectic form}. Its moment map is given by
\[
\mu\colon T\SiL\to \mathfrak u^*,\qquad \mu(f,F)[A]:= \GL_f(Af,F)-\alpha_f(Af),\quad \forall\,A\in\mathfrak u,\ \forall\,(f,F)\in T\SiL,
\]
where $\mathfrak{u}^*$ denotes the dual of the Lie Algebra $\mathfrak{u}:=\mathrm{Lie} (\mathcal{U}_{L^2})$\\
    A closed connected submanifold $N$ of $\SiL$ with positive dimension is totally magnetic if and only if $N = \SiL \cap V$, where $V$ is a complex linear subspace of $\Lt$. In this case, $N$ is magnetomorphic to the unit sphere $\SS(V)$ of the Hilbert space $V$, that is,
    \[
    \left(N, \GL, \d\alpha \right) \cong \left(\SS(V), \GL, \d\alpha \right).
    \]
\end{prop}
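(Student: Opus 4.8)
The plan is to handle the three assertions in turn, deducing the classification from \Cref{t:totmag}.

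\textbf{The magnetomorphism group.} The inclusion $\mathcal{U}_{L^2}\subseteq\mathrm{Mag}(\SiL,\GL,\d\alpha)$ is immediate: any unitary $U$ preserves the Hermitian product $\langle\cdot,\cdot\rangle_{L^2}$, hence fixes $\Vert\cdot\Vert_{L^2}$ (so $U(\SiL)=\SiL$), preserves $g=\Re\langle\cdot,\cdot\rangle_{L^2}$, and, commuting with multiplication by $\i$, preserves $\alpha$ and therefore $\d\alpha$. For the reverse inclusion I would use that the intrinsic distance on the round sphere is $d(x,y)=\arccos\Re\langle x,y\rangle_{L^2}$. Thus a magnetomorphism $\Phi$, being a $g$-isometry, satisfies $\Re\langle\Phi x,\Phi y\rangle_{L^2}=\Re\langle x,y\rangle_{L^2}$ for all $x,y\in\SiL$; since $\SiL$ spans $\Lt$ densely, $\Phi$ extends uniquely to a surjective real-linear orthogonal operator $T$ (an element of the orthogonal group $\O_{L^2}$ of the real Hilbert space $(\Lt,g)$) with $T|_{\SiL}=\Phi$. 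Feeding $\Phi^*\d\alpha=\d\alpha$ into tangent spaces at varying base points, which exhaust $\Lt$, gives in addition $\Im\langle T\cdot,T\cdot\rangle_{L^2}=\Im\langle\cdot,\cdot\rangle_{L^2}$, so $T$ preserves the full Hermitian product. Testing $\langle T(\i x),Ty\rangle_{L^2}=\langle \i x,y\rangle_{L^2}=\langle \i Tx,Ty\rangle_{L^2}$ against the dense range of $T$ yields $T(\i x)=\i Tx$, i.e. $T$ is $\C$-linear and unitary.

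\textbf{The moment map.} For $A\in\mathfrak u$ let $A_{\SiL}\colon f\mapsto Af$ be the generating vector field and $\hat A$ its tangent lift to $T\SiL$. On the untwisted part $\d\lambda$ the lift is Hamiltonian with the standard cotangent moment map $\mu_0(f,F)[A]=\lambda_{(f,F)}(\hat A)=\GL_f(Af,F)$. Since the magnetic term in \eqref{e: twisted symplectic form} is exact and $\alpha$ is $\mathcal{U}_{L^2}$-invariant by the first part, Cartan's formula together with $\mathcal L_{A_{\SiL}}\alpha=0$ gives $\iota_{\hat A}\omega_{\d\alpha}=\d\bigl(\mu_0[A]-\alpha(A_{\SiL})\bigr)$, which is precisely $\d\mu[A]$ for $\mu(f,F)[A]=\GL_f(Af,F)-\alpha_f(Af)$. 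This step is essentially bookkeeping in twisted symplectic geometry.

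\textbf{The classification.} I would apply criterion (4) of \Cref{t:totmag}, which decouples ``$N$ totally magnetic'' into ``$\II\equiv0$'' plus a Lorentz-force condition. For $(\Leftarrow)$, if $V\subseteq\Lt$ is a closed complex subspace then $N=\SiL\cap V=\SS(V)$ is a great subsphere, hence totally geodesic, so $\II\equiv0$; writing $\Lt=V\oplus V^{\perp}$ Hermitian-orthogonally one finds $T_qN^{\perp}=V^{\perp}$ inside $T_q\SiL$, and for $u\in T_qN\subseteq V$, $w\in V^{\perp}$ one has $\langle u,w\rangle_{L^2}=0$, whence $\d\alpha_q(u,w)=\Im\langle u,w\rangle_{L^2}=0$, which is condition (4c). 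For $(\Rightarrow)$, let $N$ be closed, connected, of positive dimension and totally magnetic. By (4) it is totally geodesic, so, using that geodesics of the round sphere are great circles $\SiL\cap(\RR q\oplus\RR v)$, completeness and connectedness force $N=\SS(W)$ for a closed real subspace $W\subseteq\Lt$ with $\dim_{\RR}W\geq2$. Now condition (4b) reads $Y_qv\in T_qN$ for all $(q,v)\in TN$; by \eqref{e:Lorenz force on SIl}, $Y_qv=\i v+\Re\langle\i q,v\rangle_{L^2}\,q$, and since $q\in W$ the second summand already lies in $W$, so $Y_qv\in W$ forces $\i v\in W$. As $q$ ranges over $\SS(W)$ and $v$ over $T_qN$ the vectors $v$ exhaust $W$, hence $\i W\subseteq W$ and $W=:V$ is complex; the claimed magnetomorphism $N\cong\SS(V)$ is then the identity.

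\textbf{Main obstacle.} The routine parts are the moment-map identity and the two Lorentz-force computations; the genuinely delicate inputs are the two infinite-dimensional rigidity facts, namely that every $g$-isometry of $\SiL$ is the restriction of an element of $\O_{L^2}$, and that totally geodesic submanifolds of $\SiL$ are exactly the great subspheres $\SiL\cap W$. In finite dimensions both are classical, but in the Hilbert setting I would need to argue carefully that the spanning subspaces involved are closed and that the density and extension arguments remain valid; boundedness of $\alpha_f$ noted after \Cref{def:contact_form_infinite_dimension} and skew-adjointness of the Lorentz force keep every operator well defined throughout.
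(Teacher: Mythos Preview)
Your proposal is correct and follows precisely the route the paper intends: the paper's own proof consists of the single sentence ``follows directly along the lines of \cite[Cor.~1.4]{ABM23}'', i.e.\ it defers entirely to the finite-dimensional analogue, and your argument is exactly the natural adaptation of that proof (unitary group via isometry plus preservation of $\Im\langle\cdot,\cdot\rangle$, moment map via Cartan's formula on the twisted form, classification via criterion (4) of \Cref{t:totmag} combined with the totally geodesic $=$ great-subsphere fact and the Lorentz-force computation \eqref{e:Lorenz force on SIl}). Your explicit flagging of the two infinite-dimensional rigidity inputs is appropriate and more careful than the paper itself, which simply asserts that the finite-dimensional argument carries over verbatim.
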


\begin{proof}
    The proof follows directly along the lines of \cite[Cor. 1.4]{ABM23}, and the details are left to the interested reader.
\end{proof}

\begin{rem}
    \Cref{prop: Classification of totally magnetic submanifolds of SiL} illustrates the difference between totally magnetic submanifolds and totally geodesic submanifolds. Indeed, every unit sphere $\SS(H)$ of a real subspace $H \subset \Lt$ is totally geodesic in $\SiL$, while only spheres obtained from the intersection of $\SiL$ with complex subspaces $V$ of $\Lt$ lead to totally magnetic submanifolds.
\end{rem}

As a direct corollary of \Cref{prop: Classification of totally magnetic submanifolds of SiL}, we obtain the aforementioned dynamical reduction theorem:

\begin{cor}\label{cor: dynamical reduction of SiL to Sd}
	Each magnetic geodesic in \(\left(\SiL, \GL, \d\alpha \right)\) with initial values \((f, F) \in T\SiL\) is contained in a three-dimensional sphere \(\Sd(f, F)\), obtained by intersecting \(\SiL\) with the complex plane \(\mathrm{span}_{\CC}\{f, F\}\).
\end{cor}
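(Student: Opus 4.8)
The plan is to read the statement off the classification of totally magnetic submanifolds established in \Cref{prop: Classification of totally magnetic submanifolds of SiL}, so that the only genuine work is to promote the \emph{local} containment furnished by \Cref{Definition totally magnetic submanifold} to containment of the whole magnetic geodesic.

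First I would set $V:=\mathrm{span}_{\CC}\{f,F\}$, a complex linear subspace of $L^2(\SS^1,\CC)$ of complex dimension at most two, and put $N:=\SiL\cap V$. By \Cref{prop: Classification of totally magnetic submanifolds of SiL}, $N$ is totally magnetic in $(\SiL,\GL,\d\alpha)$, and when $f,F$ are $\CC$-linearly independent it is exactly the three-sphere $\Sd(f,F)=\SS(V)$. Next I would check that the initial data are tangent to $N$: one has $f\in V$ with $\Vert f\Vert_{L^2}=1$, so $f\in N$, and $F\in V$ with $\Re\langle f,F\rangle_{L^2}=0$ (the latter because $F\in T_f\SiL$), so that $F\in V\cap T_f\SiL=T_fN$. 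Thus the magnetic geodesic $\gamma$ with $\gamma(0)=f$, $\dot\gamma(0)=F$ starts tangentially in $N$.

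For the globalization I would consider the set
\[
A:=\{t\in\RR : \gamma(t)\in V \text{ and } \dot\gamma(t)\in V\}.
\]
Since $T_{\gamma(t)}N=V\cap T_{\gamma(t)}\SiL$ and $\dot\gamma(t)\in T_{\gamma(t)}\SiL$ always holds, $A$ records precisely the times at which $(\gamma(t),\dot\gamma(t))\in TN$. Then $0\in A$ by the previous step, and $A$ is closed because $V$ is a closed (finite-dimensional) subspace and $\gamma,\dot\gamma$ are continuous. To see that $A$ is open, fix $t_0\in A$; the shifted curve $t\mapsto\gamma(t_0+t)$ is again a magnetic geodesic with initial point $\gamma(t_0)\in N$ and velocity $\dot\gamma(t_0)\in T_{\gamma(t_0)}N$, so \Cref{Definition totally magnetic submanifold} gives $\varepsilon>0$ with $\gamma((t_0-\varepsilon,t_0+\varepsilon))\subset N\subset V$; as $V$ is a closed linear subspace and $\gamma$ takes values in $V$ on this interval, so does $\dot\gamma$, whence $(t_0-\varepsilon,t_0+\varepsilon)\subset A$. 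Connectedness of $\RR$ forces $A=\RR$, i.e.\ $\gamma(\RR)\subset\SiL\cap V=\Sd(f,F)$.

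The step I expect to be the real obstacle is exactly this last globalization, since \Cref{Definition totally magnetic submanifold} only guarantees that $\gamma$ stays in $N$ on some a priori tiny interval and one must exclude a later escape; the open–closed argument above resolves it by propagating both the point condition and the tangency condition. An equivalent and more transparent route bypasses \Cref{Definition totally magnetic submanifold} altogether: the magnetic geodesic equation \eqref{e:magnetic geodesic equation in SIL} expresses $\ddot\gamma$ as a $\CC$-linear combination of $\gamma$ and $\dot\gamma$ whose coefficients are conserved along the flow, so the subspace $V=\mathrm{span}_{\CC}\{f,F\}$ is invariant, and uniqueness for this linear, constant-coefficient ODE in $L^2(\SS^1,\CC)$ forces $\gamma(t)\in V$, hence $\gamma(t)\in\SiL\cap V$, for all $t$. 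In the degenerate case $\dim_{\CC}V=1$ one simply enlarges $V$ to any complex two-plane through $f$ to literally realize a containing $\Sd(f,F)$.
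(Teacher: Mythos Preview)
Your argument is correct and matches the paper's approach: the paper simply records this as a direct corollary of \Cref{prop: Classification of totally magnetic submanifolds of SiL} without further detail, and your open--closed argument fills in exactly the globalization step that the paper leaves implicit. Your alternative route via \eqref{e:magnetic geodesic equation in SIL}---observing that $\ddot\gamma$ is a constant-coefficient $\CC$-linear combination of $\gamma$ and $\dot\gamma$, so $V=\mathrm{span}_{\CC}\{f,F\}$ is flow-invariant---is a genuinely different and more self-contained argument that bypasses \Cref{Definition totally magnetic submanifold} entirely and simultaneously yields global existence of the geodesic.
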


With this dynamical reduction theorem in mind, it should be no surprise that we can generalize the interpolation property from the magnetic geodesic flow on \((\Sd, g, \d\alpha)\) to the infinite-dimensional setting. Recall that the contact angle \(\psi\) (\eqref{e: contact angle}) is a conserved quantity. For every \(s \in \RR\), the magnetic geodesics of strength \(s\) with \(\psi = 0\), respectively \(\psi = \pi\), are exactly the orbits of the Reeb vector field with the positive, respectively negative, orientation. Magnetic geodesics with \(\psi = \tfrac{\pi}{2}\) are tangent to the contact distribution \(\ker\alpha\).

As a consequence of \Cref{cor: dynamical reduction of SiL to Sd} and \cite[Cor. 1.8]{ABM23}, we obtain:

\begin{cor}\label{C: interpolation}
	The magnetic geodesic flow \((\SiL, \GL, \d\alpha)\) is an interpolation between the sub-Riemannian geodesic flow on \((\SiL, \GL, \ker\alpha)\) and the Reeb flow of \(\alpha\) on \(\SiL\), where the interpolation parameter is the contact angle \(\psi\).\hfill\qed
\end{cor}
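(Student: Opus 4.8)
The plan is to reduce the entire statement, via the dynamical reduction theorem (\Cref{cor: dynamical reduction of SiL to Sd}), to the already-understood three-sphere, where the interpolation property is \cite[Cor.~1.8]{ABM23}. The guiding observation is that every ingredient of the claim — being a magnetic geodesic, lying in $\ker\alpha$, being a Reeb orbit, and the value of the contact angle $\psi$ of \eqref{e: contact angle} — is preserved under the reduction, so the infinite-dimensional flow inherits its qualitative structure directly from the model $(\Sd,g,\d\alpha)$.

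Concretely, I would fix a magnetic geodesic $\gamma$ of $(\SiL,\GL,\d\alpha)$ with initial data $(f,F)\in T\SiL$. By \Cref{cor: dynamical reduction of SiL to Sd} its trajectory stays in the totally magnetic three-sphere $\Sd(f,F)=\SiL\cap\mathrm{span}_{\CC}\{f,F\}$, and by \Cref{prop: Classification of totally magnetic submanifolds of SiL} a unitary $U\in\mathcal{U}_{L^2}$ realizes a magnetomorphism $(\Sd(f,F),\GL,\d\alpha)\cong(\Sd,g,\d\alpha)$. The next step is to record that $U$, being complex linear, commutes with multiplication by $\i$ and hence preserves the contact form $\alpha_f(\cdot)=\tfrac12\Re\langle\i f,\cdot\rangle_{L^2}$, the Reeb field $R=2\i(\cdot)$, the distribution $\ker\alpha$, and therefore the contact angle $\psi$; together with \Cref{r: magnetormorphism act on lorentz force} this means $U$ carries magnetic geodesics to magnetic geodesics of equal energy and identical $\psi$, matching the three regimes $\psi=0$, $\psi=\tfrac\pi2$, $\psi=\pi$ exactly.

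With this in hand I would invoke \cite[Cor.~1.8]{ABM23} on $\Sd$: there the $\psi=0$ (resp. $\psi=\pi$) magnetic geodesics are the positively (resp. negatively) oriented Reeb orbits, the $\psi=\tfrac\pi2$ ones are tangent to $\ker\alpha$ and are precisely the sub-Riemannian geodesics, and $\psi$ interpolates between these. Pulling this back through $U$ and letting $(f,F)$ range over $T\SiL$ — which exhausts all magnetic geodesics — yields the claim. The step I expect to require the most care is confirming that the \emph{intrinsic} Reeb and sub-Riemannian structures of $\Sd(f,F)$ agree with the \emph{restrictions} of those of $\SiL$, and in particular that a sub-Riemannian geodesic of $(\SiL,\GL,\ker\alpha)$ is genuinely a $\psi=\tfrac\pi2$ magnetic geodesic and hence confined to some $\Sd(f,F)$; this is exactly what the totally magnetic characterization of \Cref{t:totmag} guarantees, since it forces $\iota^*\alpha$ to be the standard contact form and the ambient Lorentz force to restrict to the intrinsic one, leaving no room for a discrepancy. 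As a sanity check one may also verify everything hands-on from the reduced equation \eqref{e:magnetic geodesic equation in SIL}: setting $\cos\psi=1$ forces $\dot\gamma=\i\gamma$ and $\gamma(t)=e^{\i t}\gamma(0)$, a Reeb orbit, while $\cos\psi=0$ reduces the equation to $\ddot\gamma-\i s\dot\gamma+\gamma=0$ with $\dot\gamma\in\ker\alpha$.
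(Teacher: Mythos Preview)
Your proposal is correct and follows precisely the route the paper takes: the paper simply states that the corollary is ``a consequence of \Cref{cor: dynamical reduction of SiL to Sd} and \cite[Cor.~1.8]{ABM23}'' and places a \qed, so your reduction to the totally magnetic $\Sd(f,F)$ and invocation of the known three-sphere result is exactly the intended argument. If anything, you have supplied more detail than the paper does, in particular the verification that the unitary magnetomorphism preserves $\alpha$, $R$, $\ker\alpha$, and $\psi$, which the paper leaves implicit.
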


Before moving on, we illustrate how we can use the dynamical reduction in the sense of \Cref{cor: dynamical reduction of SiL to Sd} to show that the magnetic system \(\left(\SiL, \GL, \d\alpha\right)\) possesses infinitely many conserved quantities:

\begin{cor}\label{c: magnetic system SiL poss infty many integrals}
	The magnetic system \(\left(\SiL, \GL, \d\alpha\right)\) possesses infinitely many conserved quantities.
\end{cor}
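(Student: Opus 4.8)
The plan is to read off the conserved quantities directly from the moment map already exhibited in \Cref{prop: Classification of totally magnetic submanifolds of SiL}, using Noether's theorem in its Hamiltonian (symplectic) formulation. Recall that the magnetic geodesic flow of $(\SiL,\GL,\d\alpha)$ is the Hamiltonian flow of the kinetic energy $E(f,F)=\tfrac12\Vert F\Vert_{L^2}^2$ with respect to the twisted symplectic form $\omega_{\d\alpha}$ of \eqref{e: twisted symplectic form}, and that the lifted action of the magnetomorphism group $\mathcal U_{L^2}$ on $T\SiL$ is Hamiltonian with moment map $\mu\colon T\SiL\to\mathfrak u^*$ given in \Cref{prop: Classification of totally magnetic submanifolds of SiL}. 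For each $A\in\mathfrak u$ I set $\mu_A:=\mu(\cdot)[A]\colon T\SiL\to\RR$ and claim that each $\mu_A$ is conserved along the magnetic geodesic flow $\Phi^t$.

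First I would verify that $E$ is invariant under the lifted $\mathcal U_{L^2}$-action: a unitary $u$ acts by $(f,F)\mapsto(uf,uF)$, so $E(uf,uF)=\tfrac12\Vert uF\Vert_{L^2}^2=\tfrac12\Vert F\Vert_{L^2}^2=E(f,F)$, which is exactly the statement that each magnetomorphism preserves the metric and hence the energy. With this in hand, the defining property of the moment map reads $X_{\mu_A}=A^\sharp$, where $A^\sharp$ is the fundamental vector field generated by $A$ on $T\SiL$, and a standard computation yields $\{E,\mu_A\}=\d E(A^\sharp)=\tfrac{d}{dt}\big|_{t=0}E\big(\exp(tA)\cdot(f,F)\big)=0$. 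Consequently $\tfrac{d}{dt}\big(\mu_A\circ\Phi^t\big)=\{\mu_A,E\}=0$, i.e. $\mu_A$ is a conserved quantity. This is the infinite-dimensional analogue of Noether's theorem, and it is corroborated geometrically by \Cref{cor: dynamical reduction of SiL to Sd}: since every magnetic geodesic remains inside the complex plane $\mathrm{span}_\CC\{f,F\}$, the components of $\mu$ transverse to this plane are forced to stay constant.

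To conclude that there are genuinely infinitely many such quantities I would show that the linear map $A\mapsto\mu_A$ is injective. If $\mu_A\equiv 0$ then, fixing $f$ and varying $F\in T_f\SiL$, the term $\Re\langle Af,F\rangle_{L^2}$ (which is linear in $F$) must coincide with the $F$-independent term $\tfrac12\Re\langle\i f,Af\rangle_{L^2}$; hence $\Re\langle Af,F\rangle_{L^2}=0$ for every $F\perp_\RR f$, which forces $Af\in\RR f$, and since $A$ is skew-Hermitian this gives $Af=0$. As $f\in\SiL$ was arbitrary, $A=0$. Because $\mathfrak u=\mathrm{Lie}(\mathcal U_{L^2})$ is infinite-dimensional, choosing linearly independent $A_1,A_2,\dots\in\mathfrak u$ (for instance $A_n f=\i\langle f,e_n\rangle_{L^2}\,e_n$ for an orthonormal basis $\{e_n\}$ of $\Lt$) produces infinitely many linearly independent conserved functions $\mu_{A_1},\mu_{A_2},\dots$.

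The main obstacle I anticipate is rigor in the infinite-dimensional, possibly weak-symplectic, setting: one must ensure that $\omega_{\d\alpha}$ genuinely admits the Hamiltonian vector fields $X_E$ and $X_{\mu_A}$ and that the identity $X_{\mu_A}=A^\sharp$ holds on the Hilbert manifold $T\SiL$, so that the bracket computation underlying Noether's theorem is legitimate and the $\mu_A$ are smooth. A secondary point, should one want functional rather than merely linear independence, is to check that the differentials $\d\mu_{A_n}$ are linearly independent at generic points of $T\SiL$, which follows from the explicit formula for $\mu_{A_n}$ computed above.
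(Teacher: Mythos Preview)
Your argument via Noether's theorem is correct and self-contained: the lifted $\mathcal U_{L^2}$-action preserves both $E$ and $\omega_{\d\alpha}$, so each component $\mu_A$ of the moment map from \Cref{prop: Classification of totally magnetic submanifolds of SiL} Poisson-commutes with $E$, and your injectivity check for $A\mapsto\mu_A$ is clean. The caveat you flag about the weak-symplectic setting is the right one to mention, but here the relevant Hamiltonian vector fields exist because the fundamental vector fields $A^\sharp$ are manifestly tangent to $T\SiL$ and the formula for $\mu$ is smooth, so the computation goes through.

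The paper proceeds differently. Instead of invoking the full symmetry group, it uses the dynamical reduction \Cref{cor: dynamical reduction of SiL to Sd}: every magnetic geodesic is trapped in a totally magnetic $\Sd(f,F)$, whence the (infinitely many) components of the flow orthogonal to $T\Sd(f,F)$ are conserved, and the integrability of the finite-dimensional piece is imported from \cite{BM24}. Your Noether approach is more systematic and yields an explicit parametrization of the integrals by $\mathfrak u$; it also makes the linear independence transparent. The paper's approach, by contrast, stays closer to the geometric picture already set up and avoids any discussion of the Hamiltonian formalism on the Hilbert manifold, at the cost of being somewhat terser about what the conserved functions actually are. Both routes ultimately encode the same phenomenon---conservation of ``angular momenta'' coming from the large isometry group---but from complementary viewpoints.
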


\begin{proof}
	By \Cref{cor: dynamical reduction of SiL to Sd}, we can reduce the magnetic system to \(\left(\Sd, g, \d\alpha \right)\). This reduced system is, by \cite[Appendix A]{BM24}, totally integrable. In addition, the orthogonal projection of the flow of the magnetic system on \(T \SiL\) to \(T\Sd^{\perp}\) provides infinitely many integrals of motion, as \(T\Sd^{\perp}\) is invariant under the magnetic flow, after choosing initial conditions in $T\Sd$,  and is infinite-dimensional.
\end{proof}

In the next subsection, we illustrate the importance of \Cref{cor: dynamical reduction of SiL to Sd} in providing a generalization of \Cref{t:mane} to the infinite-dimensional magnetic system \(\left(\SiL, \GL, \d\alpha \right)\).

\subsection{Mané's Critical Value and a Hopf--Rinow Theorem for \(\left(\SiL, \GL, \d\alpha\right)\)}

In this subsection, we compute Mané's critical value for the magnetic system \(\left(\SiL, \GL, \d\alpha\right)\) and illustrate how it can be used to obtain a Hopf--Rinow theorem for \(\left(\SiL, \GL, \d\alpha\right)\) as a generalization of \Cref{t:mane}. 

We begin by computing Mané's critical value for the magnetic system \(\left(\SiL, \GL, \d\alpha\right)\):

\begin{lem}\label{L: Mane crit value for SIL}
	Mané's critical value for the magnetic system is 
	\[
	c\left(\SiL, \GL, \d\alpha\right) = \frac{1}{8}.
	\]
\end{lem}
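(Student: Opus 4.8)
The plan is to reduce the computation to the toy model via the dynamical reduction theorem and then appeal to the already-established value $c(\Sd,g,\d\alpha)=\tfrac18$ from \Cref{t:mane}. The key observation is that Mañé's critical value, as defined in \Cref{Def: Mañé's critical value or Hilber manifolds}, only involves the free-period action functional $S_{L+k}$ evaluated along $H^1$-loops $\gamma\colon[0,T]\to\SiL$ with $\gamma(0)=\gamma(T)$, where $L(f,F)=\tfrac12|F|^2-\alpha_f(F)$. I therefore want to argue that both the upper and lower estimates for the infimum in \eqref{d} can be obtained by restricting attention to loops lying in totally magnetic three-spheres $\Sd(f,F)=\SiL\cap\mathrm{span}_\CC\{f,F\}$, together with the explicit bound $\tfrac12\Vert\alpha\Vert_\infty^2$.

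First I would establish the upper bound $c\le\tfrac18$. The Lagrangian action admits the universal lower bound
\[
S_{L+k}(\gamma)=\int_0^T\left(\tfrac12|\dot\gamma|^2-\alpha_\gamma(\dot\gamma)+k\right)\d t\ge\int_0^T\left(\tfrac12|\dot\gamma|^2-\Vert\alpha\Vert_\infty|\dot\gamma|+k\right)\d t,
\]
where $\Vert\alpha\Vert_\infty=\sup_{|v|=1}\alpha_f(v)$. From \Cref{def:contact_form_infinite_dimension} one computes $|\alpha_f(F)|\le\tfrac12|F|$ by Cauchy--Schwarz (since $|\i f|_{L^2}=1$), so $\Vert\alpha\Vert_\infty=\tfrac12$. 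The integrand $\tfrac12r^2-\tfrac12 r+k$ in the scalar variable $r=|\dot\gamma|\ge0$ is nonnegative for all $r$ precisely when $k\ge\tfrac18$, giving $S_{L+k}(\gamma)\ge0$ for every loop when $k\ge\tfrac18$. This proves $c(\SiL,\GL,\d\alpha)\le\tfrac18$ directly, without any reduction.

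For the matching lower bound $c\ge\tfrac18$, I would exhibit, for each $k<\tfrac18$, an $H^1$-loop with $S_{L+k}<0$. Here the reduction does the work: by \Cref{cor: dynamical reduction of SiL to Sd} and \Cref{prop: Classification of totally magnetic submanifolds of SiL}, any totally magnetic $\Sd(f,F)$ is magnetomorphic to $(\Sd,g,\d\alpha)$, and since magnetomorphisms preserve $\GL$ and $\d\alpha$ (hence preserve $L$ and $S_{L+k}$ by \Cref{r: magnetormorphism act on lorentz force}), a loop witnessing negativity of $S_{L+k}$ on the three-sphere transports isometrically to such a loop on $\SiL$. Because \Cref{t:mane} asserts $c(\Sd,g,\d\alpha)=\tfrac18$, for each $k<\tfrac18$ there exists by \eqref{d} an $H^1$-loop in $\Sd$ with $S_{L+k}<0$; embedding it via the inclusion $\Sd\hookrightarrow\SiL$ (an isometric, contact-form-preserving embedding onto a totally magnetic submanifold) produces the required loop in $\SiL$. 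Combining the two bounds gives $c=\tfrac18$.

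The main obstacle I anticipate is purely foundational rather than computational: one must verify that the defining infimum \eqref{d} is genuinely insensitive to whether loops are confined to a totally magnetic submanifold, i.e.\ that the totally magnetic inclusion $\iota\colon\Sd\hookrightarrow\SiL$ satisfies $S_{L+k}^{\SiL}(\iota\circ\gamma)=S_{L+k}^{\Sd}(\gamma)$. This follows because $\iota^*\GL=g$ and $\iota^*\d\alpha=\d\alpha$ force $\iota^*L^{\SiL}=L^{\Sd}$, so the actions agree; the subtlety is only that the witnessing loops from \Cref{t:mane} should be taken in $H^1$, which is automatic as the extremal configurations there are smooth. Apart from confirming $\Vert\alpha\Vert_\infty=\tfrac12$ (immediate from \Cref{def:contact_form_infinite_dimension}), no hard analysis is required, and the infinite-dimensionality enters only benignly through the reduction.
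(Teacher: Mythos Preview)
Your argument is correct. The upper bound $c\le\tfrac18$ is obtained exactly as in the paper, via Cauchy--Schwarz and completing the square in the speed variable; the paper writes this as the pointwise inequality $L(f,F)+\tfrac18\ge\tfrac12\bigl(\lVert F\rVert_{L^2}-\tfrac12\bigr)^2\ge0$, which is your scalar estimate in disguise.

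For the lower bound the two arguments differ. The paper exhibits a single explicit loop, the rescaled Reeb orbit $\gamma(t)=\mathrm{id}_{\SS^1}\cdot e^{\i t\sqrt{k}}$, and checks directly that $S_{L+k}(\gamma)<0$ for $k<\tfrac18$. You instead invoke the totally magnetic embedding $\Sd\hookrightarrow\SiL$ from \Cref{prop: Classification of totally magnetic submanifolds of SiL} and import a witnessing loop from the finite-dimensional result $c(\Sd,g,\d\alpha)=\tfrac18$ of \Cref{t:mane}. Your route is more structural and illustrates nicely how the reduction theorem transfers variational information, at the cost of depending on the prior three-sphere computation; the paper's route is self-contained and in fact produces precisely one of the loops your abstract argument guarantees (it lies in the complex line through $\mathrm{id}_{\SS^1}$, hence in a totally magnetic subsphere).

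One small imprecision: the implication ``$\iota^*\d\alpha=\d\alpha$ forces $\iota^*L^{\SiL}=L^{\Sd}$'' is not literally true pointwise, since $L$ involves $\alpha$ and not only $\d\alpha$. What you need (and what you in fact state a few lines earlier) is that the inclusion is \emph{contact-form-preserving}, $\iota^*\alpha=\alpha$, which is immediate from the common formula $\alpha_f(F)=\tfrac12\Re\langle\i f,F\rangle_{L^2}$. Alternatively, since both $\Sd$ and $\SiL$ are simply connected, equality of $\d\alpha$ suffices for the actions on \emph{loops} to agree; either remark closes the gap.
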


\begin{proof}
	By the definition of Mané's critical value (\Cref{Def: Mañé's critical value or Hilber manifolds}), it suffices to prove that, first, one can always find a periodic orbit with negative action \( S_{L+k} \) and prescribed energy below \( \frac{1}{8} \), and second, the action \( S_{L+k} \) is positive for all energies above \( \frac{1}{8} \).
	The proof proceeds by verifying these two conditions. First, one can verify that for all \( k < \frac{1}{8} \), the curve \(\gamma(t) = \mathrm{id}_{\SS^1} \cdot e^{\i t \sqrt{k}}\) is a periodic orbit with negative action. Additionally, we can bound the magnetic Lagrangian from below as follows:
	\[
	L(f, F) + \frac{1}{8} = \frac{1}{2} \Re\langle F, F \rangle_{L^2} - \frac{1}{2} \Re\langle \i f, F \rangle_{L^2} + \frac{1}{8} \geq \frac{1}{2} \left( \Vert F \Vert_{L^2}^2 - \frac{1}{2} \right)^2 \geq 0.
	\]
	Thus, \( S_{L+k} \geq 0 \) for all \( k \geq \frac{1}{8} \), which finishes the proof.
\end{proof}

Now we are in the position to generalize \Cref{t:mane} to the infinite-dimensional setting as a combination of \Cref{cor: dynamical reduction of SiL to Sd} and \Cref{t:mane}. That is, we will prove an infinite-dimensional Hopf--Rinow theorem for magnetic flows.

\begin{thm}\label{t:mane for SiL}
	Mané's critical value of the system is
	\[
	c(\SiL, \GL, \d\alpha) = \tfrac{1}{2}\Vert\alpha\Vert_\infty^2 = \tfrac{1}{8}.
	\]
	Let \(q_0\) and \(q_1\) be two points on \(\SiL\) and denote by \(\langle q_0,q_1\rangle_{L^2}\) their \(L^2\)-Hermitian product. For every \(k>0\), let \(\mathcal{G}_k(q_0,q_1)\) be the set of magnetic geodesics with energy \(k\) connecting \(q_0\) and \(q_1\). We have the following three cases:
	\begin{enumerate}
		\item\label{it: 1 Mane for SiL} If \(k > \tfrac{1}{8}\), then \(\mathcal{G}_k(q_0,q_1) \neq \varnothing\).
		\item\label{it: 2 Mane for SiL}  If \(k = \tfrac{1}{8}\), then \(\mathcal{G}_k(q_0,q_1) \neq \varnothing\) if and only if \(\langle q_0,q_1\rangle \neq 0\).
		\item\label{it: 3 Mane for SiL}  If \(k < \tfrac{1}{8}\), we have the following three subcases:
		\begin{enumerate}
			\item\label{it: 3a Mane for SiL}  If \(|\langle q_0,q_1\rangle| > \sqrt{1-8k}\), then \(\mathcal{G}_k(q_0,q_1) \neq \varnothing\).
			\item\label{it: 3b Mane for SiL}  If \(|\langle q_0,q_1\rangle| = \sqrt{1-8k}\), then there are \(a_k,b_k \in \RR\) with \(b_k > 0\) such that \(\mathcal{G}_k(q_0,q_1) \neq \varnothing\) if and only if \(\langle q_0,q_1\rangle = e^{\mathrm{i}(a_k + mb_k)}\sqrt{1-8k}\) for some \(m \in \ZZ\).
			\item\label{it: 3c Mane for SiL}  If \(|\langle q_0,q_1\rangle| < \sqrt{1-8k}\), then \(\mathcal{G}_k(q_0,q_1) = \varnothing\).
		\end{enumerate}
	\end{enumerate}
\end{thm}

\begin{rem}
	This theorem allows us to compare the magnetic geodesic flow and the geodesic flow by means of a Hopf--Rinow theorem, showing that a value of the energy is supercritical if and only if all pairs of points on the sphere can be connected by a magnetic geodesic with that value of energy, where Mañé's critical value serves as a threshold.
\end{rem}

\begin{proof}
	This follows directly from \Cref{t:mane}, \Cref{cor: dynamical reduction of SiL to Sd}, and \Cref{L: Mane crit value for SIL}.
\end{proof}

To continue our discussion, it is convenient to switch to the description of magnetic geodesics of \((\SiL, \GL, \d\alpha)\) with energy \(k\) as magnetic geodesics of unit speed for the family of systems \((\SiL, \GL, s\d\alpha)\), where \(s = \frac{1}{\sqrt{2k}}\) is now referred to as the strength of the magnetic geodesic. In this description, Mané's critical value corresponds to
\[
s_0 := \frac{1}{\sqrt{2\tfrac{1}{8}}} = 2.
\]

Before proceeding, let us mention that, as a corollary of \Cref{cor: dynamical reduction of SiL to Sd}, we can generalize the results of \cite[Thm. 1.12, Cor. 1.14]{ABM23} to the infinite-dimensional setting. In other words, one can recover all magnetic systems on \(\mathbb{C}P^{\infty}\) simultaneously through the magnetic system on \(\SiL\). Here, \(\mathbb{C}P^{\infty}\) denotes the infinite-dimensional projective space, the projectivization of \(\Lt\), equipped with the Fubini-Study form \(\omega^{FS}\) and the Fubini-Study metric \(\mathcal{G}^{FS}\); this serves as the prototype of an infinite-dimensional Kähler manifold (see~\cite[§2]{KMM19}). 

\begin{cor}\label{C:hopf}
	If \(\gamma\) is a unit-speed magnetic geodesic on \((\SiL, \GL, s \, \d\alpha)\), \(s \geq 0\), with contact angle \(\psi \in [0, \pi]\), then, up to orientation-preserving reparametrization, \(\pi(\gamma)\) is a unit-speed magnetic geodesic of \(\big(\mathbb{C}P^{\infty}, \mathcal{G}^{FS}, a_s(\psi)\omega^{FS}\big)\), where 
	\begin{equation}\label{e:curvature}
		a_s(\psi) = \frac{2\cos\psi - s}{\sin\psi}, \qquad \psi \in (0, \pi).
	\end{equation}
	In particular, the projected curve \(\pi(\gamma)\) is a geodesic circle with radius 	
	\[
	r_s(\psi) := \frac{1}{a_s(\psi)} \in \left[0, \frac{\pi}{2}\right].
	\]
\end{cor}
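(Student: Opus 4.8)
The plan is to transport the finite-dimensional statement \cite[Cor. 1.8]{ABM23} to the infinite-dimensional setting via the dynamical reduction theorem, so that essentially no new computation is required. First I would fix a unit-speed magnetic geodesic $\gamma$ of $(\SiL,\GL,s\,\d\alpha)$ with initial data $(f,F)\in T\SiL$. By \Cref{cor: dynamical reduction of SiL to Sd}, $\gamma$ is entirely contained in the totally magnetic three-sphere $\Sd(f,F)=\SiL\cap\mathrm{span}_{\CC}\{f,F\}$. By \Cref{prop: Classification of totally magnetic submanifolds of SiL} the restricted system $(\Sd(f,F),\GL,s\,\d\alpha)$ is magnetomorphic to the toy model $(\Sd,g,s\,\d\alpha)$, and since $\Sd(f,F)$ is totally magnetic, $\gamma$ is itself a unit-speed magnetic geodesic of this three-sphere system, of the same strength $s$. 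Because the contact angle is defined intrinsically by $\cos\psi=\Re\langle\i\gamma,\dot\gamma\rangle_{L^2}$ and is preserved by the magnetomorphism (it only involves the ambient Hermitian product, which the inclusion respects), the value $\psi$ is unchanged under this reduction.

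Next I would check that the two Hopf fibrations intertwine. The infinite-dimensional projection $\pi\colon\SiL\to\CP$ restricts on $\Sd(f,F)$ to the classical Hopf map onto the linearly embedded $\mathbb{C}P^1(f,F):=\pi(\Sd(f,F))$, namely the projectivization of $\mathrm{span}_{\CC}\{f,F\}$. The crucial structural point is that a complex-linear $\mathbb{C}P^1(f,F)$ is a holomorphic and totally geodesic submanifold of $(\CP,\mathcal{G}^{FS},\omega^{FS})$, so that the Fubini-Study metric $\mathcal{G}^{FS}$ and form $\omega^{FS}$ restrict to $\mathbb{C}P^1(f,F)$ as its own intrinsic Fubini-Study structure, with the normalization inherited from $\CP$. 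Consequently it suffices to prove the claim after restricting everything to $\Sd(f,F)$ and its base $\mathbb{C}P^1(f,F)\cong\mathbb{S}^2$.

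At this point I apply \cite[Cor. 1.8]{ABM23} verbatim on the reduced three-sphere: up to orientation-preserving reparametrization, $\pi(\gamma)$ is a geodesic circle of radius $r_s(\psi)=1/a_s(\psi)\in[0,\tfrac{\pi}{2}]$ on the base two-sphere, equivalently a unit-speed magnetic geodesic of $(\mathbb{S}^2,\mathcal{G}^{FS},a_s(\psi)\omega^{FS})$ with $a_s(\psi)=\tfrac{2\cos\psi-s}{\sin\psi}$ for $\psi\in(0,\pi)$; the boundary cases $\psi\in\{0,\pi\}$ (Reeb orbits) project to points, consistent with \Cref{C: interpolation}. Transporting this conclusion back along the isometric identification of $\mathbb{C}P^1(f,F)$ inside $\CP$ yields the desired statement on $\CP$.

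I expect the only genuine obstacle to be the compatibility of the Fubini-Study structures: one must verify that the linear $\mathbb{C}P^1(f,F)$ carries exactly the intrinsic Fubini-Study metric and form induced from $\CP$, with the correct normalization, so that the radius bound $r_s(\psi)\in[0,\tfrac{\pi}{2}]$ and the curvature-type quantity $a_s(\psi)$ computed on the finite-dimensional base agree with those in \cite{ABM23} without any rescaling. Once the dynamical reduction and this compatibility of the Hopf fibrations are in place, the remainder is a purely formal transport argument.
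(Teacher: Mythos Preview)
Your proposal is correct and follows essentially the same approach as the paper, which presents the corollary as a direct consequence of \Cref{cor: dynamical reduction of SiL to Sd} and \cite[Cor.~1.8]{ABM23} without further elaboration. Your write-up simply makes explicit the compatibility of the Hopf fibrations and the Fubini--Study structures that the paper leaves implicit.
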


We observe that the function \(a_s\) is a bijection if and only if \(0 \leq s < 2\). This fact can be summarized in the following statement:

\begin{cor}\label{C Sil onto CP inf}
	For a fixed \(s \in [0, 2)\), the unit-speed magnetic geodesic flow on \((\SiL, \GL, s \, \d\alpha)\) covers \emph{all} magnetic systems \(\big(\mathbb{C}P^{\infty}, \mathcal{G}^{FS}, r\omega^{FS}\big)\), \(r \, \in \mathbb{R}\).
\end{cor}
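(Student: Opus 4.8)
The plan is to deduce the statement directly from the parametrization established in \Cref{C:hopf}: for each contact angle $\psi\in(0,\pi)$ the Hopf projection sends the unit-speed magnetic geodesics of $(\SiL,\GL,s\,\d\alpha)$ with that contact angle to the magnetic geodesics of $\big(\mathbb{C}P^{\infty},\mathcal{G}^{FS},a_s(\psi)\omega^{FS}\big)$, with $a_s(\psi)=\tfrac{2\cos\psi-s}{\sin\psi}$. Hence it suffices to show that, for every fixed $s\in[0,2)$, the map
\[
a_s\colon(0,\pi)\to\RR,\qquad a_s(\psi)=\frac{2\cos\psi-s}{\sin\psi},
\]
is surjective; then every prescribed parameter $r\in\RR$ is realized by taking $\psi=a_s^{-1}(r)$, and the associated family of magnetic geodesics projects onto the magnetic geodesic flow of $\big(\mathbb{C}P^{\infty},\mathcal{G}^{FS},r\omega^{FS}\big)$. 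The endpoints $\psi\in\{0,\pi\}$ are excluded, as they correspond to Reeb orbits projecting to points.

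First I would record the boundary behaviour. Since $s<2$ we have $2-s>0$, so as $\psi\to 0^+$ the numerator tends to $2-s>0$ while $\sin\psi\to 0^+$, giving $a_s(\psi)\to+\infty$; similarly as $\psi\to\pi^-$ the numerator tends to $-2-s<0$ while $\sin\psi\to 0^+$, giving $a_s(\psi)\to-\infty$. As $a_s$ is continuous on $(0,\pi)$, the intermediate value theorem already yields surjectivity onto $\RR$, which is all the corollary requires.

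To upgrade this to the bijectivity asserted in the text preceding the statement, I would differentiate:
\[
a_s'(\psi)=\frac{-2\sin^2\psi-(2\cos\psi-s)\cos\psi}{\sin^2\psi}=\frac{s\cos\psi-2}{\sin^2\psi}.
\]
For $s\in[0,2)$ and $\psi\in(0,\pi)$ one has $s\cos\psi\le s<2$, so the numerator $s\cos\psi-2$ is strictly negative; hence $a_s'<0$ throughout $(0,\pi)$ and $a_s$ is a strictly decreasing continuous bijection $(0,\pi)\to\RR$. Finally I would note that the hypothesis $s<2$ is sharp: at $s=2$ a Taylor expansion at $\psi=0$ gives $a_2(\psi)\sim-\psi\to 0^-$, while $a_2'<0$ still holds, so the range of $a_2$ is only the half-line $(-\infty,0)$ and not all parameters $r$ are attained.

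The computations here are elementary, so there is no serious analytic obstacle; the one point requiring care is conceptual, namely matching the phrase ``covers \emph{all} magnetic systems $(\mathbb{C}P^{\infty},\mathcal{G}^{FS},r\omega^{FS})$, $r\in\RR$'' with surjectivity of the parameter map $a_s$, and checking via \Cref{C:hopf} that by varying the initial velocity one recovers every magnetic geodesic of the target system (rather than merely one geodesic realizing the parameter). This last point is supplied by the dynamical reduction of \Cref{cor: dynamical reduction of SiL to Sd} together with the finite-dimensional covering statement of \cite{ABM23}.
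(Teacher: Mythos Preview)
Your proposal is correct and follows exactly the same approach as the paper: the corollary is stated immediately after the observation that $a_s$ is a bijection if and only if $0\le s<2$, and your elementary calculus argument (boundary behaviour plus monotonicity via $a_s'(\psi)=(s\cos\psi-2)/\sin^2\psi<0$) supplies precisely this verification. The additional remarks on sharpness at $s=2$ and on the conceptual point about covering all geodesics via \Cref{cor: dynamical reduction of SiL to Sd} and \cite{ABM23} are welcome elaborations of what the paper leaves implicit.
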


\begin{rem}
	Here, Mané's critical value provides a natural explanation for the threshold of the magnetic strength in \Cref{C Sil onto CP inf}. 
\end{rem}

\section{The Madelung transform as magnetomorphism}
The aim of this section is to prove that we extend the \emph{Madelung transform} as an isometry between an infinite-dimensional Lie group and the open subset of nowhere-vanishing functions on the \(L^2\)-sphere to a magnetomorphism. 

We begin by introducing the setting in detail. Let \(\SS^1 = \RR / \ZZ\) denote the unit circle, and let \(\mathrm{Diff}_0^m\left(\SS^1\right)\) represent the half-Lie group of diffeomorphisms of $\SS^1$ of Sobolev with smooth right multiplication and only continuous left multiplication; see \cite{Bauer_2025} for the notion of a half-Lie group. This group consists of all diffeomorphisms of \(\SS^1\) of Sobolev class \(H^m\) that fix a designated point on \(\SS^1\). Unless stated otherwise, we assume \(m > \frac{5}{2}\). Note that \(\mathrm{Diff}_0^m(\SS^1) \cong \mathrm{Dens}^m(\SS^1)\), where \(\mathrm{Dens}^m(\SS^1)\) denotes the space of probability densities on \(\SS^1\) of Sobolev class \(H^m\), as discussed in \cite[§5]{EM70} and \cite{km02}. 
We further denote by \(\SS^1_{4\pi}\) the circle of length \(4\pi\), and by \(H^m\left(\SS^1, \SS^1_{4\pi}\right)\) the space of maps of Sobolev class \(H^m\). The half-Lie group \(G^m\) is then defined as 
\begin{equation}\label{e: defi of Gm}
    G^m = \mathrm{Diff}_0^m\left(\SS^1\right) \rtimes H^{m-1}\left(\SS^1, \SS^1_{4\pi}\right),
\end{equation}
where the explicit group product structure is described in \cite[§2]{Lennels13}. This group carries a bi-invariant metric, the $\dot{H}^1$-metric, defined at the identity \((\id, 0)\) by
\begin{equation}\label{e: defi H1 dor metric at identity}
\mathcal{G}^{\dot{H}^1}_{(\id, 0)}\left((u, \rho), (v, \psi)\right) :=\langle (u, \rho), (v, \psi)\rangle_{(\id, 0)}^{\dot{H}^1} = \frac{1}{4} \int_{\SS^1} u_x v_x + \rho \psi \, \mathrm{d}x,
\end{equation}
where the tangent space at the identity is \(T_{(\id, 0)}G^m \cong H^m_0\left(\SS^1, \RR\right) \times H^{m-1}\left(\SS^1, \RR\right)\). Here, \(H^m_0\left(\SS^1, \RR\right)\) denotes the space of all functions of Sobolev class \(H^m\) with zero mean. We can extend the metric in \eqref{e: defi H1 dor metric at identity} by right invariance to all points \((\varphi, \tau) \in G^m\) by 
\begin{equation}\label{e: definition H1 dot metric at arbitrary element}
	\mathcal{G}^{\dot{H}^1}_{(\varphi, \tau)}\left((U_1, U_2), (V_1, V_2) \right) := \langle (U_1, U_2), (V_1, V_2)\rangle_{(\varphi, \tau)}^{\dot{H}^1} = \frac{1}{4} \int_{\SS^1} \frac{U_{1x}V_{1x}}{\varphi_x} + U_2 V_2 \varphi_x \, \mathrm{d}x,
\end{equation}
where \((U_1, U_2), (V_1, V_2) \in T_{(\varphi, \tau)}G^m \cong H^m_0\left(\SS^1, \RR\right) \times H^{m-1}\left(\SS^1, \RR\right)\). For more details, we refer again to \cite[§2]{Lennels13}. 

Furthermore, we denote by \(\SiLm\) the \(L^2\)-unit sphere of all functions of Sobolev class \(H^m\), and the open subset of nowhere-vanishing functions in \(\SiLm\) by 
\[
\U^m := \left\{f \in \SiLm : f(x) \neq 0, \quad \forall x \in \SS^1 \right\},
\]
which naturally carries, as an open subset of \(\SiL\), a Riemannian metric obtained by restricting the round metric \(\mathcal{G}^{L^2}(\cdot, \cdot) := \Re \langle \cdot , \cdot \rangle_{L^2}\) to \(U^m\).

Lennels proved in \cite[Thm. 3.1]{Lennels13} that the Madelung transform \(\varPhi\) is an isometry between \(\left( G^m, \mathcal{G}^{\dot{H}^1}\right)\) and \(\left(\U^{m-1}, \GL \right)\), i.e., 
\begin{equation}\label{e: Madelung transform is isometry}
	\varPhi: \left( G^m, \mathcal{G}^{\dot{H}^1}\right) \longrightarrow \left(\U^{m-1}, \GL \right) : (\varphi, \tau) \mapsto \sqrt{\varphi_x} \ e^{\i \frac{\tau}{2}}
\end{equation}
is an isometry for all \(m > \frac{5}{2}\).  By restricting the magnetic system \(\left( \SiL, \GL, \d\alpha\right)\) to the open subset \(U^{m}\), we naturally obtain, by a slight abuse of notation, the magnetic system \(\left( U^{m}, \GL, \d\alpha\right)\). 
By pulling back the magnetic field \(\d\alpha\) with the Madelung transform \(\varPhi\) (see \eqref{e: Madelung transform is isometry}), we naturally obtain the magnetic system \(\left( G^m, \GH, \varPhi^*\d\alpha\right)\).
Next, we prove that the Madelung transform is indeed a magnetomorphism: 
\begin{thm}\label{t: Madelungtransform as Magnetomorphims}
	The Madelung transform \(\varPhi\), see \eqref{e: Madelung transform is isometry}, is a magnetomorphism, in the sense of \Cref{d: magnetomorphism}, between the magnetic systems \(\left( G^m, \mathcal{G}^{\dot{H}^1}, \varPhi^*\d\alpha\right) \) and  \(\left( \U^{m-1}, \GL, \d\alpha\right)\) for all \(m > \frac{5}{2}\).
\end{thm}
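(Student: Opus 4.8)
The plan is to verify directly the two conditions defining a magnetomorphism in \Cref{d: magnetomorphism}: that $\varPhi$ is a diffeomorphism with $\varPhi^*\GL = \GH$, and that $\varPhi$ pulls back the magnetic field $\d\alpha$ on $\U^{m-1}$ to the magnetic field $\varPhi^*\d\alpha$ on $G^m$. Since the magnetic field on the source has been \emph{defined} as the $\varPhi$-pullback of $\d\alpha$, the strategy is to isolate what genuinely requires proof (the metric and diffeomorphism statements, together with smoothness in the half-Lie category) from what holds by construction (the magnetic-field condition).

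First I would record that the diffeomorphism and isometry statements are exactly the content of Lennels' theorem \cite[Thm. 3.1]{Lennels13}, recalled in \eqref{e: Madelung transform is isometry}: the Madelung transform
\[
\varPhi\colon \left(G^m, \GH\right) \longrightarrow \left(\U^{m-1}, \GL\right), \qquad (\varphi, \tau) \mapsto \sqrt{\varphi_x}\, e^{\i \tau/2},
\]
is an isometry for all $m > \tfrac52$. An isometry between Riemannian Hilbert manifolds is in particular a diffeomorphism, so $\varPhi$ is a diffeomorphism and satisfies $\varPhi^*\GL = \GH$. This settles the first defining condition of \Cref{d: magnetomorphism}.

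For the second condition I would observe that, by the very construction of the magnetic system $\left(G^m, \GH, \varPhi^*\d\alpha\right)$, the magnetic field on the source is $\sigma_1 := \varPhi^*\d\alpha$, while that on the target is $\sigma_2 := \d\alpha$. Hence $\varPhi^*\sigma_2 = \varPhi^*\d\alpha = \sigma_1$ holds tautologically. To confirm that this yields a bona fide exact magnetic system in the sense of \Cref{d: Magnetic system}, I would use that pullback commutes with the exterior derivative, so that
\[
\varPhi^*\d\alpha = \d\left(\varPhi^*\alpha\right),
\]
exhibiting $\varPhi^*\d\alpha$ as an exact two-form with magnetic potential $\varPhi^*\alpha$; closedness is then automatic.

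The only point requiring genuine care --- and the step I expect to be the main obstacle --- is the smoothness bookkeeping in the half-Lie group category. Because $\varPhi$ involves differentiation ($\varphi \mapsto \varphi_x$) it lowers the Sobolev index by one, which is precisely why the target is $\U^{m-1}$ rather than $\U^m$; I would verify that $\varPhi$ and its inverse are smooth as maps of Hilbert manifolds on the nowhere-vanishing locus (where $\varphi_x > 0$ guarantees that $\sqrt{\varphi_x}$ and the chosen branch of $e^{\i\tau/2}$ are well defined and smooth), and that the pullback $\varPhi^*$ acts on one- and two-forms in the usual way despite the merely continuous left multiplication on $G^m$. Once this regularity is in place, the two conditions above combine to give that $\varPhi$ is a magnetomorphism for every $m > \tfrac52$, completing the proof.
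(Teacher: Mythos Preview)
Your proposal is correct and follows essentially the same approach as the paper: invoke \cite[Thm.~3.1]{Lennels13} for the isometry/diffeomorphism condition, and observe that the magnetic-field condition $\varPhi^*\sigma_2 = \sigma_1$ is tautological by the very definition of the source magnetic field as $\varPhi^*\d\alpha$. The paper's proof is in fact a single sentence to this effect, so your additional remarks on exactness and Sobolev regularity only make the argument more explicit.
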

\begin{rem}
	In particular, \Cref{t: Madelungtransform as Magnetomorphims} also proves that the Madelung transform \(\varPhi\) defines a magnetomorphism between the two above-mentioned magnetic systems in the category of inverse limit Hilbert manifolds or tame Fréchet manifolds. See \cite{EM70} and \cite{Khesin-Mis-Mod-inf-Newton}, along with the references therein, for an explanation of these notions of smoothness.
\end{rem}
\begin{proof}
	Follows directly from \Cref{d: magnetomorphism} in combination with the fact that the Madelung transform, \eqref{e: Madelung transform is isometry}, is an isometry, as established by \cite[Thm. 3.1]{Lennels13}.
\end{proof}
Before moving on, we note that the magnetic system \(\left( G^m, \mathcal{G}^{\dot{H}^1}, \varPhi^*\d\alpha\right) \) arises naturally through the lens of infinite-dimensional differential geometry. Here, we work in the smooth category and denote by \( G \) the \( C^{\infty} \) version of \( G^m \) in \eqref{e: defi of Gm}. This is a regular Lie group, following the notation in \cite{KrieglMichor1997}.
First, the infinite-dimensional Hopf fibration \(\pi: \SiL \longrightarrow \mathbb{C}P^{\infty}\) is, by definition, a so-called \emph{Boothby-Wang bundle} or \emph{prequantization bundle} in the sense of \cite[§7]{Gg08}. Here, \(\mathbb{C}P^{\infty}\) denotes the infinite-dimensional projective space, the projectivization of \(\Lt\), equipped with the Fubini-Study form \(\omega^{FS}\); see \cite[§2]{KMM19} for more details. 
If we denote by \(\mathbb{P}U\) the projectivization of \(U\), the restriction of the Hopf fibration to \(U\) also defines a Boothby-Wang bundle. Note that the magnetic system \(\left( \mathbb{P}U, \mathcal{G}^{FS}, \omega^{FS} \right)\) is, as a special case of the beautiful construction by Khesin-Misiolek-Modin in \cite{KhesinMisilokeModinMadelung-proc, KMM19}, Kähler isomorphic to the cotangent bundle of the space of probability densities \(T^*\mathrm{Dens}(\SS^1)\), equipped with the so-called Sasaki-Fisher-Rao metric \(\mathcal{G}^{\text{Rao}}\) and the canonical symplectic form \(\d\lambda\). This Kähler isomorphism is exactly the Madelung transform. Again, we refer to \cite{KMM19} for an explanation.  In summary, we obtain the following:

\begin{cor}\label{C: TDens as boothby wang bundel}
The tame Fréchet Lie group \((G, \varPhi^*\alpha)\) equipped with its natural contact form is a Boothby-Wang bundle over the cotangent bundle of the space of probability densities \((T^*\mathrm{Dens}(\SS^1), \d\lambda)\) equipped with the canonical symplectic form in the category of tame Fréchet manifolds. In particular, we obtain the commutative diagram, here, \(\cong\) indicates magnetomorphic: 
\[
\begin{tikzcd}[row sep=large, column sep=large]
	\left(G, \GH, \Phi^* \d\alpha\right) 
	\arrow[rr, "\cong", "\Phi = \text{Madelung}"'] 
	\arrow[dd] & &
	\left(U, \GL, \d\alpha\right) \subseteq \left(S^\infty, \GL, \d\alpha\right) 
	\arrow[dd, "\text{Hopf}"] \\
	& & \\
	\left(T^*\mathrm{Dens}(\SS^1), \mathcal{G}^{\text{Rao}}, \d\lambda\right) 
	\arrow[rr, "\cong", "\Phi = \text{Madelung}"'] & &
	\left(\mathbb{P}U,  \mathcal{G}^{FS}, \omega^{FS}\right) \subseteq \left(\mathbb{C}P^\infty, \mathcal{G}^{FS}, \omega^{FS}\right)
\end{tikzcd}.
\]

\end{cor}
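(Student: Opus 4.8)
\section*{Proof proposal}

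The plan is to assemble three already-established facts into the claimed commutative square and to transport the Boothby--Wang structure across the top Madelung arrow: the magnetomorphism property of $\varPhi$ (\Cref{t: Madelungtransform as Magnetomorphims}), the fact that the Hopf fibration $\pi\colon\SiL\to\CP$ is a Boothby--Wang bundle, and the Khesin--Misio\l ek--Modin Kähler isomorphism on the base \cite{KMM19,KhesinMisilokeModinMadelung-proc}. Throughout I work in the $C^{\infty}$/tame Fréchet category, where the magnetomorphism property persists by the remark following \Cref{t: Madelungtransform as Magnetomorphims} and $G$ is a regular Lie group \cite{KrieglMichor1997}. The first step is to identify the relevant circle action. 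On $U\subseteq\SiL$ the Hopf fibration carries the free action $f\mapsto e^{\i\theta}f$ generated by the Reeb field $R_f=2\i f$, with $\alpha$ as connection one-form and curvature $\d\alpha=\pi^*\omega^{FS}$; this is the prequantization property recalled before the statement. Writing $\varPhi(\varphi,\tau)=\sqrt{\varphi_x}\,e^{\i\tau/2}$, one computes $\varPhi(\varphi,\tau+2\theta)=e^{\i\theta}\varPhi(\varphi,\tau)$, so $\varPhi$ intertwines the Hopf action with the constant translation $\tau\mapsto\tau+2\theta$ in the $\SS^1_{4\pi}$-valued factor of $G$, and the Reeb field pulls back to a constant multiple of $\partial_\tau$.

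Since $\varPhi$ is a magnetomorphism, the pulled-back one-form $\varPhi^*\alpha$ is a contact form on $G$ with $\d(\varPhi^*\alpha)=\varPhi^*\d\alpha$, and the equivariance just established shows that $\varPhi^*\alpha$ is invariant under the $\tau$-translation action and evaluates to a nonzero constant on its infinitesimal generator. Hence, after normalization, $\varPhi^*\alpha$ is a connection one-form for this $S^1$-action on $G$, with horizontal curvature $\varPhi^*\d\alpha$. Transporting the Boothby--Wang structure of $\pi$ across the $S^1$-equivariant diffeomorphism $\varPhi$ then exhibits $(G,\varPhi^*\alpha)$ as a Boothby--Wang bundle over the quotient $G/S^1$, whose induced symplectic curvature is the two-form to which $\varPhi^*\d\alpha$ descends.

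It remains to identify the base and to verify commutativity. Quotienting $G$ by constant $\tau$-translations is precisely projectivization, so the top Madelung transform descends to a map on quotients that is exactly the lower Madelung transform of \cite{KMM19}; this is a Kähler isomorphism $(T^*\mathrm{Dens}(\SS^1),\mathcal{G}^{\mathrm{Rao}},\d\lambda)\cong(\mathbb{P}U,\mathcal{G}^{FS},\omega^{FS})$, which identifies $G/S^1$ with $T^*\mathrm{Dens}(\SS^1)$ and carries the descended curvature $\omega^{FS}$ to the canonical form $\d\lambda$. Commutativity of the square is then immediate: both horizontal arrows are the Madelung transform at the total-space and base levels, the right vertical arrow is the Hopf projection, the left one is the $S^1$-quotient, and $\pi\circ\varPhi$ coincides with the lower Madelung map post-composed with the quotient projection by the $S^1$-equivariance of $\varPhi$. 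Invoking the smoothness framework of \cite{EM70,Khesin-Mis-Mod-inf-Newton} places the whole construction in the tame Fréchet category.

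I expect the main obstacle to be making precise the notion of a Boothby--Wang bundle in the tame Fréchet setting---in particular, verifying that $G\to G/S^1$ is a genuine smooth principal $S^1$-bundle (freeness and properness of the action, local triviality) and that $G/S^1$ is a smooth manifold, rather than merely a set-theoretic quotient. This subtlety is largely circumvented by transporting the bundle structure from the Hopf fibration, where it holds by construction, through the $S^1$-equivariant magnetomorphism $\varPhi$, instead of building the quotient from scratch; the residual work is to confirm that this transport respects the tame Fréchet smooth structures on all four corners of the diagram.
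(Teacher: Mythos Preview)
Your proposal is correct and follows essentially the same approach as the paper: the paper's argument is contained entirely in the paragraph preceding the corollary, which assembles exactly the three ingredients you use (the Madelung magnetomorphism of \Cref{t: Madelungtransform as Magnetomorphims}, the Hopf fibration as a Boothby--Wang bundle, and the Khesin--Misio\l ek--Modin K\"ahler isomorphism on the base) and then states the corollary without further proof. Your explicit verification of the $S^1$-equivariance $\varPhi(\varphi,\tau+2\theta)=e^{\i\theta}\varPhi(\varphi,\tau)$ and your discussion of the tame Fr\'echet quotient are useful details that the paper leaves implicit.
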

\begin{rem}\label{r: all magnetic systems on TDens can recoverd through G GH dalp}
	Using the fact that the diagram in \Cref{C: TDens as boothby wang bundel} is commutative and that \(\varPhi\) is a magnetomorphism, we can apply \Cref{C Sil onto CP inf} to deduce that, for a fixed \(s \in [0, 2)\), the unit-speed magnetic geodesic flow on \(\left(G, \GH,s\cdot \Phi^* \d\alpha\right)\) covers \emph{all} magnetic systems \(\left(T^*\mathrm{Dens}(\SS^1), \mathcal{G}^{\text{Rao}}, r\cdot\d\lambda\right)\), \(r \in \mathbb{R}\).
\end{rem}

\section{The (M2HS) as a magnetic geodesic equation}
The aim of this section is to establish that the solutions of the highly nonlinear system of partial differential equations introduced as the \textbf{magnetic two-component Hunter--Saxton system}~\eqref{(M2HS)} are in one-to-one correspondence with the magnetic geodesics of the magnetic system \(\left(G, \GH, \Phi^* \mathrm{d}\alpha\right)\). Before going into detail, we provide a brief overview of this section.

In \Cref{ss: 5 the equations M2HS}, we state the main theorem of this section, namely the above-mentioned duality result. The subsequent sections, \Cref{ss: 5.2 the lorentz force}--\Cref{ss: 5.4 proof of duality}, are devoted to the proof of \Cref{t: magnetic Hunter Saxton system}. In \Cref{ss: 5.2 the lorentz force}, we derive the Lorentz force associated with the magnetic system appearing in \Cref{t: magnetic Hunter Saxton system}. In \Cref{ss: 5.3 derivation of mag geode eq}, we derive the corresponding magnetic geodesic equation. Finally, in \Cref{ss: 5.4 proof of duality}, we complete the proof of \Cref{t: magnetic Hunter Saxton system} by combining the results of the previous sections.\\
We conclude the section in \Cref{ss:infinitely many conserved quantities} by proving a statement of independent interest, namely that \eqref{(M2HS)} possesses infinitely many conserved quantities.
\subsection{Statement of the Duality Theorem}\label{ss: 5 the equations M2HS}
To prepare for this, we introduce the following notation. For a curve in \(\Diff_0(\mathbb{S}^1)\), we denote its time and spatial derivatives by \(\varphi_t\) and \(\varphi_x\), respectively. The same convention applies to vector fields \(u\), where \(\varphi\) is replaced by \(u\).

\begin{thm}\label{t: magnetic Hunter Saxton system}
	A \(C^2\)-curve \((\varphi,\tau): [0,T)\longrightarrow G^m\), where \(T>0\) is the maximal existence time, is a magnetic geodesic of the system \(\left(G^{m}, \GH, \varPhi^{*}\mathrm{d}\alpha\right)\) with \(m > \frac{5}{2}\) if and only if  
	\[
	(u = \varphi_t \circ \varphi^{-1}, \rho = \tau_t \circ \varphi^{-1}) \in C\left([0,T), T_{(\id,0)}G^m \right) \cap C^1\left([0,T), T_{(\id,0)}G^{m-1} \right)
	\]
	is a solution of the magnetic two-component Hunter-Saxton system:
	\begin{align}\label{(M2HS)}
		\begin{cases}
			u_{tx} = & -\frac{1}{2} u_x^2 - u \cdot u_{xx} + \frac{1}{2} \rho^2 - (s\rho + 2(c^2 - s\delta)), \\
			\rho_t = & -(\rho u)_x + s u_x,
		\end{cases} \tag{M2HS}
	\end{align}
	where \(c^2\), the \(\dot{H}^1\)-energy of the system, is a conserved quantity (integral of motion) given by
	\[
	c^2 = \GH_{(\varphi, \tau)}\left((\varphi_t, \tau_t), (\varphi_t, \tau_t) \right) = \langle (u, \rho), (u, \rho) \rangle_{\dot{H}^1} = \frac{1}{4} \int_{\mathbb{S}^1} u_x^2 + \rho^2 \, \mathrm{d}x.
	\]
	Additionally, the contact angle \(\delta\) is another conserved quantity, given by
	\[
	\delta = \left\langle \frac{\mathrm{d}}{\mathrm{d}t} \varPhi(\varphi, \tau), \mathrm{i} \cdot \varPhi(\varphi, \tau) \right\rangle_{L^2} = \frac{1}{2} \int_{\SS^1} \rho \, \mathrm{d}x.
	\]
	
\end{thm}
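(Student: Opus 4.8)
The plan is to transport everything to the round sphere through the Madelung transform and then read the two scalar equations off the real and imaginary parts of a single complex equation on $\SiL$. First I would invoke \Cref{t: Madelungtransform as Magnetomorphims}: since $\varPhi$ is a magnetomorphism and magnetomorphisms send magnetic geodesics to magnetic geodesics of equal energy, $(\varphi,\tau)$ is a magnetic geodesic of $\left(G^m,\GH,\varPhi^*\d\alpha\right)$ if and only if $\gamma:=\varPhi(\varphi,\tau)=\sqrt{\varphi_x}\,e^{\i\tau/2}$ is a magnetic geodesic of $\left(\U^{m-1},\GL,\d\alpha\right)$; as $\U^{m-1}$ is open in $\SiL$, the latter is equivalent to $\gamma$ solving the magnetic geodesic equation of $\left(\SiL,\GL,\d\alpha\right)$. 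Combining the Levi--Civita formula \eqref{e:Levi cevita on SIl} (kept at speed $c$, so $\nabla_{\dot\gamma}\dot\gamma=\ddot\gamma+c^2\gamma$) with the Lorentz force \eqref{e:Lorenz force on SIl} of strength $s$ exactly as in \eqref{e:magnetic geodesic equation in SIL}, this equation takes the form
\[
\ddot\gamma-\i s\dot\gamma+\left(c^2-s\delta\right)\gamma=0,\qquad c^2=\|\dot\gamma\|_{L^2}^2,\quad \delta=\Re\langle \i\gamma,\dot\gamma\rangle_{L^2}.
\]

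Next I would insert the polar form $\gamma=\sqrt{D}\,e^{\i\theta}$ with $D:=\varphi_x>0$ and $\theta:=\tau/2$, and divide by the nowhere-vanishing factor $\gamma$. With $\dot\gamma/\gamma=\tfrac{\dot D}{2D}+\i\dot\theta$ and the analogous expansion of $\ddot\gamma/\gamma$, the equation separates cleanly into a real and an imaginary scalar equation. The heart of the computation is the passage to the Eulerian velocities via $\varphi_t=u\circ\varphi$ and $\tau_t=\rho\circ\varphi$: differentiating and applying the chain rule yields $\dot D/D=u_x\circ\varphi$, $\ddot D/D=(u_{tx}+uu_{xx}+u_x^2)\circ\varphi$, $\dot\theta=\tfrac12\rho\circ\varphi$ and $\ddot\theta=\tfrac12(\rho_t+u\rho_x)\circ\varphi$. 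Since $\varphi$ is a diffeomorphism, any identity holding after precomposition with $\varphi$ is an Eulerian identity; clearing the common factor, the real part becomes precisely the $u_{tx}$-equation of \eqref{(M2HS)} and the imaginary part the transport-type equation $\rho_t=-(\rho u)_x+su_x$.

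It remains to pin down the two integrals of motion. Conservation of $c^2$ is structural: the Lorentz force is skew-symmetric, so $\tfrac12\|\dot\gamma\|_{L^2}^2$ is constant along magnetic geodesics, and since $\varPhi$ is an isometry this equals $\GH_{(\varphi,\tau)}\big((\varphi_t,\tau_t),(\varphi_t,\tau_t)\big)=\tfrac14\int_{\SS^1}u_x^2+\rho^2\,\d x$. For $\delta$, the quantity $\Re\langle \i\gamma,\dot\gamma\rangle_{L^2}$ is the conserved contact angle of \eqref{e: contact angle}; evaluating it on the polar form and changing variables by $y=\varphi(x)$, $\d y=\varphi_x\,\d x$, gives $\delta=\tfrac12\int_{\SS^1}\rho\,\d x$.

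I expect the genuine difficulty to lie not in the algebra but in the regularity bookkeeping needed to upgrade this formal equivalence of equations into an honest one-to-one correspondence of solutions. Passing from the Lagrangian datum $(\varphi,\tau)\in C^2\big([0,T),G^m\big)$ to $(u,\rho)$ requires right translation by $\varphi^{-1}$, which costs one spatial derivative; this is exactly what produces the asymmetric class $C\big([0,T),T_{(\id,0)}G^m\big)\cap C^1\big([0,T),T_{(\id,0)}G^{m-1}\big)$ and why $m>\tfrac52$ is imposed, so that the relevant $H^m$ spaces are algebras and composition and inversion are smooth in the Ebin--Marsden sense. The careful points are therefore to check that each Eulerian quantity above lands in the stated Sobolev class and that the forward and backward assignments $(\varphi,\tau)\leftrightarrow(u,\rho)$ are mutually inverse on these classes.
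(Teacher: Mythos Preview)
Your argument is correct, and it takes a genuinely different route from the paper. The paper stays entirely on the group side: it computes the pullback $\varPhi^*\d\alpha$ explicitly (Lemma~5.3), derives the Lorentz force $Y^{\dot H^1}$ on $G^m$ (Lemma~5.5), combines this with the Levi--Civita connection of $(G^m,\GH)$ from \cite{Lennels13} involving the inverse inertia operator $A^{-1}$, writes down the full magnetic geodesic equation on $G^m$ (Proposition~5.6), and finally differentiates the first component in $x$ and simplifies the $A^{-1}\partial_x$ term to arrive at \eqref{(M2HS)}. You instead exploit \Cref{t: Madelungtransform as Magnetomorphims} at the outset to transport the problem to $\SiL$, where the magnetic geodesic equation is the linear complex ODE $\ddot\gamma-\i s\dot\gamma+(c^2-s\delta)\gamma=0$; the polar ansatz $\gamma=\sqrt{\varphi_x}\,e^{\i\tau/2}$ then makes the two equations of \eqref{(M2HS)} drop out as the real and imaginary parts after the Eulerian substitution. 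Your path is shorter and conceptually cleaner for this theorem alone, and it avoids computing $\varPhi^*\d\alpha$, $Y^{\dot H^1}$, and the $A^{-1}$ term altogether. The paper's route, on the other hand, produces as by-products the explicit Lorentz force on $G^m$ and the intrinsic form of the magnetic geodesic equation (Proposition~5.6), both of which are reused later to define weak magnetic geodesics on $\MAC$ (\Cref{d: weak magnetic geodesics in MAC}) and weak solutions (\Cref{d: weak solutions}); these would not be available from your argument without additional work.
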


\begin{rems}
	\item \textbf{Novelty:} The \eqref{(M2HS)} is explicitly described as an infinite-dimensional magnetic geodesic equation.
	
	\item \textbf{Special case (\(s = 0\)):} When \(s = 0\), the system reduces to the two-component Hunter-Saxton system (2HS). As shown in \cite{Lennels13}, the (2HS) corresponds precisely to the geodesic equation on \((G, \GH)\).
	
	\item \textbf{Geometric insight:} The (M2HS) represents a linear deformation in \((u_x, \rho)\) of the (2HS). This deformation reflects the geometry of the equations, as the magnetic geodesic equation is itself a linear deformation of the geodesic equation in the velocity field; see \eqref{e:mg}.
	
	\item \textbf{Special case (\(\rho \equiv s\)):} By choosing \(\rho \equiv s\) and differentiating the resulting partial differential equation once with respect to \(x\), we recover the Hunter-Saxton equation; see \cite{km02, Lennels13}.
	
	\item \textbf{Maximal existence time:} The maximal existence time is independent of the Sobolev order \(H^m\), following exactly the lines of \cite[§4]{Lennels13}.
	
	\item \textbf{Tangent space at identity:} For the convenience of the reader, we recall that \(T_{(\id, 0)}G^m \cong H^m_0\left(\SS^1, \RR\right) \times H^{m-1}\left(\SS^1, \RR\right)\).

    \item \textbf{\eqref{(M2HS)} as a magnetic Euler–Arnold equation.} In \cite{Maier25magneticEulerArnold}, the author introduces a broader framework known as the \emph{magnetic Euler–
    	Arnold equation}. The system \eqref{(M2HS)} admits a formulation within this framework; however, its derivation—carried out in this chapter—is essential to rigorously establish that \eqref{(M2HS)} indeed fits into the magnetic Euler–Arnold setting.
\end{rems}

The proof of \Cref{t: magnetic Hunter Saxton system} consists of several steps. First, we need to explicitly express the pullback \(\varPhi^* \mathrm{d}\alpha\) to use \eqref{e:Lorentz} to compute the Lorentz force of the magnetic system \(\left(G, \GH, \varPhi^{*} \mathrm{d}\alpha\right)\). Along the way, we prove that the magnetic field \(\varPhi^* \mathrm{d}\alpha\) is right invariant under the group action of \(G\). This allows us to pull back the magnetic geodesic equation of \(\left(G, \GH, \varPhi^{*} \mathrm{d}\alpha\right)\) to the identity, which then leads to \Cref{t: magnetic Hunter Saxton system}.  
\subsection{The Lorentz force}\label{ss: 5.2 the lorentz force}We begin with the following lemma: 
\begin{lem}
	\label{l: Pullback of dalpha w.r.t. Phi is right invariant}
	The magnetic field \(\varPhi^* \mathrm{d}\alpha \in \Omega^2(G^m)\) is given at \((\varphi, \alpha) \in G^m\) by 
	\[
	\left( \varPhi^* \mathrm{d}\alpha \right)_{(\varphi, \alpha)}\left( (U_1,U_2), (V_1,V_2)\right) = \Im \int_{\mathbb{S}^1}\frac{1}{4 |\varphi_x|}\left(V_{1x}U_2 - U_{1x}V_2\right)  \varphi_x  \, \mathrm{d}x
	\] 
	for all \((U_1,U_2), (V_1,V_2) \in T_{(\varphi, \alpha)}G^m\). Furthermore, \(\varPhi^* \mathrm{d}\alpha\) is right invariant with respect to the right action of \(G^m\), i.e., 
	\[
	\left( \varPhi^* \mathrm{d}\alpha \right)_{(\varphi, \alpha)}\left( (U_1,U_2), (V_1,V_2)\right) = \left( \varPhi^* \mathrm{d}\alpha \right)_{(\mathrm{id}, 0)}\left( (U_1,U_2)\circ \varphi^{-1}, (V_1,V_2)\circ \varphi^{-1}\right)
	\] 
	holds for all \((\varphi, \alpha) \in G^m\) and all \((U_1,U_2), (V_1,V_2) \in T_{(\varphi, \alpha)}G^m\).
\end{lem}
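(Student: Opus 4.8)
The plan is to reduce the statement to the two facts already recorded in this section: the magnetic field on the sphere is the restriction of $\Im\langle\cdot,\cdot\rangle_{L^2}$ to $T\SiL$ (the differential of the contact form of \Cref{def:contact_form_infinite_dimension}), and the Madelung transform is the explicit map $\varPhi(\varphi,\tau)=\sqrt{\varphi_x}\,e^{\i\tau/2}$ of \eqref{e: Madelung transform is isometry}. Since $\varPhi$ is a smooth map of Hilbert manifolds by \cite[Thm.~3.1]{Lennels13}, its differential may be computed pointwise. Differentiating along a variation $(\varphi+\ve U_1,\tau+\ve U_2)$ and using that $\varphi_x>0$, so that $\sqrt{\varphi_x}$ is smooth and real, I obtain, after factoring $\tfrac{1}{2\sqrt{\varphi_x}}$ out of the two resulting terms,
\[
\d\varPhi_{(\varphi,\tau)}(U_1,U_2)=\frac{1}{2\sqrt{\varphi_x}}\bigl(U_{1x}+\i\,U_2\varphi_x\bigr)\,e^{\i\tau/2}.
\]

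Writing $A:=\d\varPhi(U_1,U_2)$ and $B:=\d\varPhi(V_1,V_2)$, I would substitute into the magnetic field recalled above, $\d\alpha(A,B)=\Im\langle A,B\rangle_{L^2}$. Two simplifications do all the work: the phases cancel, since $\overline{e^{\i\tau/2}}\,e^{\i\tau/2}=1$, and the two prefactors $\tfrac{1}{2\sqrt{\varphi_x}}$ combine to $\tfrac{1}{4|\varphi_x|}$ (using $\overline{\sqrt{\varphi_x}}=\sqrt{\varphi_x}$). Computing the imaginary part of the resulting pointwise product then gives the integrand $\tfrac{1}{4|\varphi_x|}(V_{1x}U_2-U_{1x}V_2)\varphi_x$, with the ordering of the two terms fixed by the conventions for $\langle\cdot,\cdot\rangle_{L^2}$ and for $\d\alpha$, while its real-part counterpart reproduces exactly the $\dot H^1$-integrand of $\GH_{(\varphi,\tau)}$ — a useful consistency check with the fact that $\varPhi$ is an isometry. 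Integrating yields the asserted formula, the weights $|\varphi_x|$ and $\varphi_x$ being precisely those produced by this factorization.

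For right-invariance the cleanest route is to work one step earlier, with the $1$-form $\varPhi^*\alpha$ rather than $\varPhi^*\d\alpha$. Using the same differential together with $\alpha_f(F)=\tfrac12\Im\langle f,F\rangle_{L^2}$ and $\int_{\SS^1}U_{1x}\,\d x=0$, I find the simple expression $(\varPhi^*\alpha)_{(\varphi,\tau)}(U_1,U_2)=\tfrac14\int_{\SS^1}U_2\,\varphi_x\,\d x$. Substituting the right-trivialized fields $u_i=U_i\circ\varphi^{-1}$ and changing variables $x=\varphi(y)$ shows this equals $(\varPhi^*\alpha)_{(\id,0)}(u_1,u_2)=\tfrac14\int_{\SS^1}u_2\,\d x$, i.e.\ $\varPhi^*\alpha$ is right-invariant; since exterior differentiation commutes with pullback by the right-translation diffeomorphisms, $\varPhi^*\d\alpha=\d(\varPhi^*\alpha)$ is right-invariant as well. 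Equivalently, one may check the displayed formula directly against $x=\varphi(y)$: the Jacobian $\varphi_y$ from $\d x$ cancels the factor $\varphi_y^{-1}$ produced whenever $\partial_x$ meets a composition $U_i\circ\varphi^{-1}$, so the undifferentiated and differentiated slots transform with opposite weights and the value at $(\varphi,\tau)$ collapses to the value at the identity.

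The computation itself is routine; the two places that demand care are (i) the analytic justification, via \cite{Lennels13}, that $\varPhi$ is differentiable as a map between the relevant Sobolev-class Hilbert manifolds, so that the pointwise manipulations and the interchange of $\Im$ with $\int$ are legitimate at the regularity $m>\tfrac52$, and (ii) the bookkeeping in the right-invariance step, where one must track correctly how $\partial_x$ interacts with precomposition by $\varphi^{-1}$ so that the density weight $\varphi_x$ transforms consistently. Everything else is forced by the cancellation of the phase $e^{\i\tau/2}$ and the orientation-preserving identity $\varphi_x=|\varphi_x|$.
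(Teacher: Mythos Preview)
Your argument is correct and follows essentially the same path as the paper's proof: compute $\d\varPhi$ (the paper quotes this from \cite[Thm.~3.1]{Lennels13}, in fact omitting the phase factor $e^{\i\tau/2}$ that you correctly carry and then observe to cancel), plug into $\d\alpha=\Im\langle\cdot,\cdot\rangle_{L^2}$, and read off the integrand. For right-invariance the paper merely points to the analogous change-of-variables computation for $\GH$; your primary route via the right-invariance of the $1$-form $\varPhi^*\alpha$ together with naturality of $\d$ is a mild and arguably cleaner variant, while your alternative direct verification is exactly what the paper has in mind.
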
 

\begin{proof}
	Recall that the pullback of \(\mathrm{d}\alpha\) is formally defined as 
	\begin{equation}\label{e: pullback}
		\varPhi^{*}_{(\cdot)}\mathrm{d}\alpha(\cdot, \cdot) = \mathrm{d}\alpha_{\varPhi(\cdot)}(D\varPhi(\cdot), D\varPhi(\cdot)),
	\end{equation}
	where the derivative of the Madelung transform is, by \cite[Thm. 3.1]{Lennels13}, at the point \((\varphi, \tau) \in G^m\) given by 
	\begin{equation}\label{e: derivative of Madelung transform}
		\left(D\varPhi\right)_{(\varphi,\alpha)}\left( U_1,U_2\right)= \frac{1}{2\sqrt{\varphi_x}} \left( U_{1x}+ \mathrm{i} U_2 \varphi_x\right) \quad\forall (U_1,U_2)\in T_{(\varphi, \alpha)}G^m.
	\end{equation}
	Recall from the paragraph following \Cref{def:contact_form_infinite_dimension} that the derivative of the standard contact form is given as the restriction of \(\Im\langle \cdot, \cdot \rangle_{L^2}\) to \(T\SiL\). This, in combination with \eqref{e: pullback} and \eqref{e: derivative of Madelung transform}, gives the expression of \(\varPhi^* \mathrm{d}\alpha\) we want to prove. 
	
	In addition, a computation that follows exactly the lines of the fact that \(\GH\), see \cite[§2]{Lennels13}, is a right-invariant Riemannian metric on \(G^m\) proves that \(\varPhi^* \mathrm{d}\alpha\) is right invariant. 
\end{proof}
The Lorentz force of the magnetic system \(\left(G^m, \GH, \vardalp\right)\) is uniquely determined by \(\GH\) and \(\vardalp\). From \eqref{e: definition H1 dot metric at arbitrary element} and \Cref{l: Pullback of dalpha w.r.t. Phi is right invariant}, it can be directly derived that the Lorentz force of the magnetic system \(\left(G^m, \GH, \vardalp\right)\), analogous to \eqref{e:Lorenz force on SIl}, is given by
\begin{equation}\label{e: defining equation of Lorenz force}
	Y^{\dot{H^1}}{(\varphi,\tau)}\left( U_1,U_2 \right) = \left( J_{\dot{H}^1}\right)_{(\varphi, \tau)} \circ \left(\pi_{\xi_{\dot{H^1}}}\right)_{(\varphi, \tau)}(U_1,U_2),
\end{equation}
for all \(\left((\varphi,\tau),(U_1,U_2)\right)\in TG^m\).

Here, \(\JHo\) is an almost complex structure defined on the infinite-dimensional contact-type distribution \(\XiHo = \ker \varPhi^*\alpha\). Additionally, \(\pi_{\xi_{\dot{H^1}}}\) denotes the \(\GH\)-orthogonal projection of \(TG^M\) onto \(\XiHo\), which will be defined and computed in full detail later.

Following the computation in \Cref{l: Pullback of dalpha w.r.t. Phi is right invariant}, the pullback of the contact form \(\alpha\) with respect to \(\varPhi\) on \(G^m\) is given by
\begin{equation}\label{e: contact form on G}
	\left( \varPhi^*\alpha \right)_{(\varphi, \sigma)}\left(U_1, U_2\right) = -\frac{1}{2} \int_{\SS^1} U_2 \varphi_x \, \mathrm{d}x, \quad \forall \left((\varphi,\tau),(U_1,U_2)\right) \in TG^m.
\end{equation}
Thus, by \eqref{e: contact form on G}, the kernel of \(\varPhi^*\alpha\) at the point \((\varphi, \tau)\), called
 the contact distribution \(\XiHo\), is given by
\begin{equation}\label{e: definition Xi in H one}
	\left( \xi_{\dot{H}^1}\right)_{(\varphi,\tau)} := \ker \left( \varPhi^*\alpha \right)_{(\varphi, \tau)} = \left\{ (U_1,U_2)\in T_{(\varphi, \tau)}G^m : \int_{\SS^1} U_2 \varphi_x\, \mathrm{d}x = 0 \right\}.
\end{equation}
Thus, by \eqref{e: definition Xi in H one}, the \(\GH\)-orthogonal projection of \(TG^M\) onto \(\XiHo\) at the point \((\varphi, \tau)\in G^m\), where the dependence on the point is omitted for simplicity, is expressed as:
\begin{equation}\label{e: orthogonal projection onto Xi}
	\left(\pi_{\xi_{\dot{H^1}}}\right)_{(\varphi, \tau)} : T_{(\varphi, \tau)}G^m \to \left(\xi_{\dot{H^1}}\right)_{(\varphi, \tau)}, \quad (U_1, U_2) \mapsto \left( U_1, U_2 - \int_{\SS^1} U_2 \varphi_x \, \mathrm{d}x \right).
\end{equation}

\begin{rem}
	At this point, it is important to note that, since the Madelung transform is a magnetomorphism by \Cref{t: Madelungtransform as Magnetomorphims}, and since the contact distribution \(\xi\) on \(\SiL\) has codimension one in \(T\SiL\) (as shown in \Cref{p: contact distri on SIL has codim one}), the distribution \(\XiHo\), defined in \eqref{e: definition Xi in H one}, also has codimension one in \(TG^m\).
\end{rem}

Furthermore, the map \(J_{\dot{H}^1}: \XiHo \to \XiHo\) is a well-defined bundle endomorphism, which at \((\varphi,\tau)\in G^m\) is defined as:
\begin{equation} \label{d: acs on XiHo}
	\left( J_{\dot{H}^1}\right)_{(\varphi, \tau)}: \left( \xi_{\dot{H}^1}\right)_{(\varphi,\tau)} \longrightarrow  \left( \xi_{\dot{H}^1}\right)_{(\varphi,\tau)}, \quad 
	(U_1,U_2) \mapsto \left( \left(y\mapsto-\int_0^y U_2\varphi_x \, \mathrm{d}x\right), \frac{U_{1x}}{\varphi_x}\right).
\end{equation}

It is another exercise in logic that \( J_{\dot{H}^1} \) is the unique almost complex structure on \( \XiHo \) that is compatible with \( \varPhi^*\d\alpha \) and \( \GH \), i.e.,
\[
\left(J_{\dot{H}^1}\right)^2 = -\id_{\XiHo} \quad \text{and} \quad \restr{\left(\varPhi^*\d\alpha\right)}{\XiHo}(\cdot, J_{\dot{H}^1}\cdot) = \restr{\GH}{\XiHo}(\cdot, \cdot),
\]
where the details of the computation are left to the reader. By using \eqref{e: defining equation of Lorenz force}, \eqref{e: orthogonal projection onto Xi}, \Cref{d: acs on XiHo}, and the fact that \( Y^{\dot{H}^1} \) is right-invariant (since both \(\GH\) and \(\vardalp\) are), we derive the following lemma:

\begin{lem}\label{l: Lorenz force G with GH metric}
	The Lorentz force \( Y^{\dot{H}^1} \) of the magnetic system \(\left(G^m, \GH, \vardalp\right)\) at the point \((\varphi, \tau)\) is given by
	\[
	Y^{\dot{H}^1}_{(\varphi, \tau)}: T_{(\varphi, \tau)}G^m \to T_{(\varphi, \tau)}G^m,\quad 
	(U_1, U_2) \mapsto \left(\left(y \mapsto -\int_0^y \left(U_2 - \int_{\SS^1} U_2 \varphi_x \, \mathrm{d}x\right)\varphi_x \, \mathrm{d}x\right), \frac{U_{1x}}{\varphi_x}\right).
	\]
	Furthermore, \( Y^{\dot{H}^1} \) is right-invariant with respect to the right action of \( G^m \), i.e.,
	\[
	Y^{\dot{H}^1}_{(\id, 0)}((u, \rho)) = Y^{\dot{H}^1}_{(\varphi, \tau)}((u \circ \varphi, \rho \circ \varphi)) 
	\]
	for all \((\varphi, \tau) \in G^m\) and all \((u, \rho) \in T_{(\id, 0)}G^m\).
\end{lem}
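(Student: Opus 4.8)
The goal is to establish an explicit formula for the Lorentz force $Y^{\dot H^1}$ of the magnetic system $\left(G^m, \GH, \vardalp\right)$ together with its right-invariance. The plan is to combine the three ingredients that have been assembled just before the statement: the factorization \eqref{e: defining equation of Lorenz force} of the Lorentz force as $Y^{\dot H^1} = J_{\dot H^1} \circ \pi_{\xi_{\dot H^1}}$, the explicit orthogonal projection \eqref{e: orthogonal projection onto Xi} onto the contact distribution, and the definition \eqref{d: acs on XiHo} of the almost complex structure $J_{\dot H^1}$ on $\XiHo$. The computation is then essentially a substitution: apply $\pi_{\xi_{\dot H^1}}$ to $(U_1, U_2)$ to obtain $\left(U_1,\, U_2 - \int_{\SS^1} U_2 \varphi_x\, \d x\right)$, and then feed this into $J_{\dot H^1}$ as given in \Cref{d: acs on XiHo}. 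The first component of $J_{\dot H^1}$ acts on the \emph{second} slot of its input, yielding $y \mapsto -\int_0^y \left(U_2 - \int_{\SS^1} U_2\varphi_x\,\d x\right)\varphi_x\,\d x$, while the second component acts on the \emph{first} slot, which is unchanged by the projection, giving $U_{1x}/\varphi_x$. This reproduces exactly the claimed formula.

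Before doing this substitution I would first verify the one subtle consistency point that \eqref{e: defining equation of Lorenz force} presupposes but does not prove: that the Lorentz force defined intrinsically through \eqref{e:Lorentz}, namely $\GH_{(\varphi,\tau)}(Y^{\dot H^1}(U_1,U_2),(V_1,V_2)) = \left(\vardalp\right)_{(\varphi,\tau)}\big((U_1,U_2),(V_1,V_2)\big)$, really does agree with $J_{\dot H^1}\circ\pi_{\xi_{\dot H^1}}$. This is where the compatibility relations $\left(J_{\dot H^1}\right)^2 = -\id_{\XiHo}$ and $\restr{\left(\vardalp\right)}{\XiHo}(\cdot,J_{\dot H^1}\cdot) = \restr{\GH}{\XiHo}(\cdot,\cdot)$ are used, together with the fact that $\vardalp$ annihilates the Reeb direction $R = \langle\vardalp\rangle^\perp$-complement spanned by the $\GH$-orthogonal complement of $\XiHo$. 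Concretely: since $Y^{\dot H^1}$ is skew and $\vardalp$ descends to $\XiHo$ (the Reeb direction lies in the kernel of $\vardalp$), testing \eqref{e:Lorentz} against arbitrary $(V_1,V_2)$ and decomposing both arguments along $\XiHo \oplus \langle R\rangle$ reduces the defining identity to the compatibility of $J_{\dot H^1}$ with the restricted forms, which holds by construction. This mirrors the finite-dimensional computation \eqref{e:Lorenz force on SIl} for $\SiL$ and the analysis in \cite[§3]{ABM23}.

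For the right-invariance assertion, the cleanest route is to invoke \Cref{t: Madelungtransform as Magnetomorphims}: since $\varPhi$ is a magnetomorphism and both $\GH$ and $\vardalp$ are right-invariant (the latter established in \Cref{l: Pullback of dalpha w.r.t. Phi is right invariant}, the former recalled from \cite[§2]{Lennels13}), the Lorentz force $Y^{\dot H^1}$, being uniquely determined by $\GH$ and $\vardalp$ via \eqref{e:Lorentz}, must inherit right-invariance. Explicitly, for a right translation $R_\varphi(\psi,\sigma) := (\psi,\sigma)\circ\varphi$, both $R_\varphi^*\GH = \GH$ and $R_\varphi^*\vardalp = \vardalp$ force $\d R_\varphi \cdot Y^{\dot H^1} = Y^{\dot H^1} \cdot \d R_\varphi$ by the same uniqueness argument behind \Cref{r: magnetormorphism act on lorentz force}; evaluating at the identity with tangent vector $(u,\rho)$ and at $(\varphi,\tau)$ with tangent vector $(u\circ\varphi,\rho\circ\varphi)$ then gives the stated identity. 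Alternatively, one can read the right-invariance directly off the explicit formula by the change of variables $x \mapsto \varphi(x)$, noting that the combination $\int_0^y(\cdots)\varphi_x\,\d x$ and the ratio $U_{1x}/\varphi_x$ are exactly the invariant expressions one expects from the structure of \eqref{e: definition H1 dot metric at arbitrary element}.

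The main obstacle is not the algebra, which is routine substitution, but rather pinning down precisely why the factorization \eqref{e: defining equation of Lorenz force} is correct — that is, ensuring that the $\GH$-orthogonal complement of $\XiHo$ coincides with the span of the Reeb direction and that $\vardalp$ vanishes on vectors paired with this complement. This is what guarantees that only the contact-distribution part of each tangent vector contributes to $\vardalp$, so that $Y^{\dot H^1}$ factors through $\pi_{\xi_{\dot H^1}}$. Once this orthogonality and the compatibility of $J_{\dot H^1}$ with the restricted metric and two-form are in hand — all of which transfer from the $\SiL$ picture via the magnetomorphism $\varPhi$ — the formula follows immediately, and the right-invariance is then automatic from the right-invariance of the two defining tensors.
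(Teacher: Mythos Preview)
Your proposal is correct and follows essentially the same route as the paper: compose the projection \eqref{e: orthogonal projection onto Xi} with the almost complex structure \eqref{d: acs on XiHo} via the factorization \eqref{e: defining equation of Lorenz force}, and deduce right-invariance from the right-invariance of $\GH$ and $\vardalp$. Your additional care in justifying why the factorization $Y^{\dot H^1}=J_{\dot H^1}\circ\pi_{\xi_{\dot H^1}}$ actually matches the defining relation \eqref{e:Lorentz} is a point the paper simply asserts, so you are only being more thorough, not taking a different path.
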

\subsection{Derivation of the magnetic geodesic equation}\label{ss: 5.3 derivation of mag geode eq} The left-hand side of the magnetic geodesic equation \eqref{e:mg} is the Levi-Civita connection evaluated along a curve. For the magnetic system \(\left(G^m, \GH, \vardalp \right)\), this is given by \cite[Cor 3.3]{Lennels13} as:
\begin{equation}\label{e: Levi Cevita connection evaluated at identity}
	\left(\nabla_{(u, \rho)} (u,\rho )\right)_{(\id, 0)} = 
	\begin{pmatrix}
		u_t + u u_x\\
		\rho_t + u \rho_x
	\end{pmatrix}
	- 
	\begin{pmatrix}
		-\frac{1}{2}A^{-1}\partial_x\left(u_x^2 + \rho^2 \right)\\
		-\rho u_x
	\end{pmatrix},
\end{equation}
where \(A^{-1}\) is the inverse of the so-called inertia operator \(A\) (see \cite{AK98, Vi08} for general language), given by:
\begin{equation}\label{Inverse of Inertia operator}
	A^{-1}f(x) = - \int_0^x \int_0^y f(z)\, \d z\, \d y 
	+ x \int_{\SS^1}\int_0^y f(z)\, \d z\, \d y, \quad x \in \SS^1.
\end{equation}

By the right invariance of \(\GH\), the Levi-Civita connection of \((G^m, \GH)\) is also right invariant. Thus, using \eqref{e: Levi Cevita connection evaluated at identity}, the covariant derivative along \((\varphi, \tau) \in G^m\) applied to \((\varphi_t, \tau_t) \in T_{(\varphi, \tau)}G^m\) reads as:
\begin{equation*}
	\nabla_{(\varphi_t, \tau_t)}(\varphi_t, \tau_t) =
	\begin{pmatrix}
		\left(u_t + u u_x \right) \circ \varphi 
		- \frac{1}{2} \left( \int_0^{\varphi(\cdot)} \big(u_x^2(y) + \rho^2(y)\big)\, \d y 
		- \varphi(\cdot) \int_{\SS^1} \big(u_x^2(y) + \rho^2(y)\big)\, \d y \right)\\
		\left(\rho_t + u \rho_x + u_x \rho\right) \circ \varphi 
	\end{pmatrix}.
\end{equation*}

Using \Cref{l: Lorenz force G with GH metric}, and the fact that, by \Cref{t: Madelungtransform as Magnetomorphims}, the Madelung transform maps conserved quantities to conserved quantities as a magnetomorphism, we obtain the following result:

\begin{prop}\label{Prop: magnetic geodesic equation in G}
	The magnetic geodesic equation of \(\left(G^m, \GH, \vardalp \right)\) at the point \((\varphi, \tau) \in G^m\) is given by:
	\begin{align*}
		\begin{pmatrix}
			\left(u_t + u u_x \right)\circ \varphi - \frac{1}{2} \left( \int_0^{\varphi(\cdot)} \big(u_x^2(y) + \rho^2(y)\big) \d y - \varphi(\cdot)(c^2 - s \delta) \right) - s \int_{0}^{\varphi(\cdot)} \rho(y) \d y\\
			\left(\rho_t + (u\rho)_x - s u_x \rho \right)\circ \varphi
		\end{pmatrix}
		= 0,
	\end{align*}
	where \(c^2\), the \(\dot{H}^1\)-energy of the system, is a conserved quantity (integral of motion) given by:
	\[
	c^2 = \GH_{(\varphi, \tau)}\left((\varphi_t, \tau_t), (\varphi_t, \tau_t) \right) = \frac{1}{4} \int_{\mathbb{S}^1} \big(u_x^2 + \rho^2\big) \, \mathrm{d}x.
	\]
	Additionally, the contact angle \(\delta = \cos(\psi)\) is another conserved quantity, given by:
	\[
	\delta = \frac{1}{2} \int_{\SS^1} \rho \, \mathrm{d}x.
	\]
\end{prop}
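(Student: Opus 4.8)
The plan is to read the magnetic geodesic equation \eqref{e:mg} as an identity between two tangent vectors, both of which are now available explicitly: the left-hand side is the right-invariant Levi-Civita connection of $(G^m,\GH)$ evaluated along $(\varphi,\tau)$, displayed immediately before the statement, and the right-hand side is the right-invariant Lorentz force $Y^{\dot H^1}$ of \Cref{l: Lorenz force G with GH metric}. Writing a magnetic geodesic of strength $s$ as a unit-speed magnetic geodesic of $(G^m,\GH,s\,\varPhi^{*}\d\alpha)$ and using that the Lorentz force is linear in the magnetic field, \eqref{e:mg} becomes $\nabla_{(\varphi_t,\tau_t)}(\varphi_t,\tau_t)=s\,Y^{\dot H^1}_{(\varphi,\tau)}(\varphi_t,\tau_t)$. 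The proof is then the computation of the two sides in the Eulerian variables $u=\varphi_t\circ\varphi^{-1}$, $\rho=\tau_t\circ\varphi^{-1}$.

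First I would substitute $\varphi_t=u\circ\varphi$ and $\tau_t=\rho\circ\varphi$ into the formula of \Cref{l: Lorenz force G with GH metric}. Applying the change of variables $z=\varphi(x)$ to the inner integrals, and using that every $\varphi\in G^m$ fixes the base point so that $\varphi(0)=0$, the averaging integral becomes $\int_{\SS^1}\tau_t\varphi_x\,\d x=\int_{\SS^1}\rho\,\d z$ and the antiderivative becomes $\int_0^{y}\tau_t\varphi_x\,\d x=\int_0^{\varphi(y)}\rho\,\d z$. The second component collapses to $u_x\circ\varphi$, since $(u\circ\varphi)_x/\varphi_x=u_x\circ\varphi$. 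Adding this to the connection term and factoring out the common composition with $\varphi$ produces two scalar identities on $\SS^1$; composing them with $\varphi^{-1}$ and differentiating the first one once in $x$ reproduces exactly \eqref{(M2HS)}, which is the cleanest way to organize and verify the algebra.

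It then remains to show that the two scalars entering as coefficients are genuine constants of motion, so that the resulting system has the stated autonomous form. The energy $c^2=\GH_{(\varphi,\tau)}((\varphi_t,\tau_t),(\varphi_t,\tau_t))=\tfrac14\int_{\SS^1}(u_x^2+\rho^2)\,\d x$ is constant because the Lorentz force is $\GH$-skew-symmetric, hence kinetic energy is preserved along every magnetic geodesic, as noted after \Cref{d: Magnetic system}; this is what allows $\int_{\SS^1}(u_x^2+\rho^2)$, produced by the inertia operator $A^{-1}$, to be replaced by a multiple of $c^2$. For the contact angle $\delta$ I would invoke the magnetomorphism property: by \Cref{t: Madelungtransform as Magnetomorphims} the curve $\varPhi(\varphi,\tau)$ is a magnetic geodesic on $(\SiL,\GL,\d\alpha)$, and since magnetomorphisms preserve energy and the contact angle $\cos\psi=\Re\langle\i\gamma,\dot\gamma\rangle_{L^2}$ of \eqref{e: contact angle} is conserved along the flow on $\SiL$, $\delta$ is conserved as well. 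A direct computation with the derivative \eqref{e: derivative of Madelung transform} of the Madelung transform then gives $\delta=\langle\tfrac{\d}{\d t}\varPhi(\varphi,\tau),\i\,\varPhi(\varphi,\tau)\rangle_{L^2}=\tfrac12\int_{\SS^1}\rho\,\d x$, so that the averaging integral $\int_{\SS^1}\rho$ equals $2\delta$ and is likewise constant.

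The main obstacle is the bookkeeping in the previous two steps rather than any conceptual difficulty. Concretely, one must check that the two nonlocal contributions---the antiderivative coming from $A^{-1}$ in the connection and the antiderivative in $Y^{\dot H^1}$---recombine into the purely local right-hand side of \eqref{(M2HS)}, and this hinges on tracking the normalization factor $\tfrac14$ appearing both in $\GH$ and in $\varPhi^{*}\d\alpha$ (see \Cref{l: Pullback of dalpha w.r.t. Phi is right invariant}); their cancellation is exactly what fixes the coefficients of $s\rho$ and of $(c^2-s\delta)$. The conservation of $\delta$ is the only genuinely non-elementary input: it is invisible at the level of \eqref{(M2HS)} itself and relies on transporting the conserved contact angle on $\SiL$ back to $G^m$ through the magnetomorphism.
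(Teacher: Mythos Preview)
Your proposal is correct and follows essentially the same approach as the paper: combine the right-invariant covariant derivative displayed immediately before the proposition with the right-invariant Lorentz force of \Cref{l: Lorenz force G with GH metric}, then invoke skew-symmetry for the energy $c^2$ and the magnetomorphism property of \Cref{t: Madelungtransform as Magnetomorphims} to transport the conserved contact angle from $\SiL$ back to $G^m$ for $\delta$. Your write-up is in fact more explicit than the paper's, which only says ``Using \Cref{l: Lorenz force G with GH metric}, and the fact that \ldots the Madelung transform maps conserved quantities to conserved quantities as a magnetomorphism, we obtain the following result'' and leaves the change-of-variables bookkeeping to the reader.
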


\subsection{Proof of \Cref{t: magnetic Hunter Saxton system}}\label{ss: 5.4 proof of duality}
If we evaluate $Y^{\dot{H^1}}$ in \Cref{l: Lorenz force G with GH metric} at \((\id, 0)\) and insert it with \eqref{e: Levi Cevita connection evaluated at identity} into \eqref{e:mg}, we obtain that if \((\varphi, \tau)\) is a magnetic geodesic in \(\left(G^m, \GH, \vardalp \right)\), then \((u, \rho) := \left(\varphi_t \circ \varphi^{-1}, \tau_t \circ \varphi^{-1}\right)\) solves:
\begin{equation}\label{e: magentic geod equation of Gm at identity}
	\begin{pmatrix}
		u_t + u u_x\\
		\rho_t + u \rho_x
	\end{pmatrix} 
	- 
	\begin{pmatrix}
		-\frac{1}{2}A^{-1}\partial_x\left(u_x^2 + \rho^2 \right)\\
		-\rho u_x
	\end{pmatrix}
	= s \begin{pmatrix}
		\left(y \mapsto -\int_0^y \left( \rho - \int_{\SS^1}\rho \, \d x\right)\d x\right)\\
		u_x
	\end{pmatrix}.
\end{equation}

By differentiating the first row of \eqref{e: magentic geod equation of Gm at identity} with respect to \(x\), we get:
\begin{equation}\label{e: differentiating first row of magnetic geodesic at identity}
	u_{tx} + u_x^2 + u u_{xx} + \partial_x \frac{1}{2}A^{-1}\partial_x\left(u_x^2 + \rho^2 \right) = -s\left(\rho - \int_{\SS^1} \rho \, \d x\right),
\end{equation}
where one can show with a standard computation (left to the reader) that:
\begin{align*}
	\frac{1}{2}\partial_x A^{-1}\partial_x\left(u_x^2 + \rho^2 \right) 
	= -\frac{1}{2} \left(u_x^2(x) + \rho^2(x)\right) + \frac{1}{2}\int_{\SS^1}\left(u_x^2 + \rho^2\right)\, \d x.
\end{align*}

Using \Cref{Prop: magnetic geodesic equation in G} and \eqref{e: differentiating first row of magnetic geodesic at identity}, we find that:
\[
u_{tx} + \frac{1}{2}u_x^2 + u u_{xx} - \frac{1}{2}\rho^2 = -(s\rho + 2(c^2 - \delta s)),
\]
where \(c^2\) is the conserved \(\dot{H}^1\)-energy and \(\delta\) is the conserved contact angle.

Substituting this back into \eqref{e: magentic geod equation of Gm at identity}, we end up with the magnetic two-component Hunter saxton system:
\[
\begin{cases}
	u_{tx} &= -\frac{1}{2}u_x^2 - u \cdot u_{xx} + \frac{1}{2}\rho^2 - (s\rho + 2(c^2 - \delta s)),\\
	\rho_t &= -(u\rho)_x + s u_x.
\end{cases}
\]

This concludes the proof of \Cref{t: magnetic Hunter Saxton system}.
\subsection{Infinitely many conserved quantities of the (M2HS)}\label{ss:infinitely many conserved quantities}

As a first illustration of the Hamiltonian, respectively geometric, nature of \eqref{(M2HS)}, we aim to prove, by means of the Madelung transform, that the (M2HS) possesses infinitely many conserved quantities.

\begin{cor}\label{c: infinitely many conserved quantities}
	The magnetic system \(\left(G^m, \GH,  \vardalp\right)\) possesses infinitely many conserved quantities, and thus so does \eqref{(M2HS)}.
\end{cor}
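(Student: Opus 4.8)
The plan is to transport the infinitely many conserved quantities already produced on the sphere in \Cref{c: magnetic system SiL poss infty many integrals} across the Madelung transform, exploiting that a magnetomorphism carries conserved quantities of one magnetic system to conserved quantities of the other, together with the one-to-one correspondence of \Cref{t: magnetic Hunter Saxton system}. No new integrals need to be constructed directly on \(G^m\).

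First I would restrict the infinite family of integrals of \(\left(\SiL, \GL, \d\alpha\right)\) — those arising as the orthogonal projection of the flow onto the infinite-dimensional, flow-invariant normal bundle \(T\Sd^{\perp}\) appearing in the proof of \Cref{c: magnetic system SiL poss infty many integrals} — to the open subset \(\U^{m-1} \subset \SiLm\) of nowhere-vanishing functions. Since conservation is a statement along trajectories and \(\U^{m-1}\) is open, these restricted functions remain conserved along every magnetic geodesic of \(\left(\U^{m-1}, \GL, \d\alpha\right)\) as long as it stays in \(\U^{m-1}\); as the image of \(\varPhi\) is exactly \(\U^{m-1}\), this restriction loses nothing.

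Next, by \Cref{t: Madelungtransform as Magnetomorphims} the Madelung transform \(\varPhi\) is a magnetomorphism between \(\left(G^m, \GH, \varPhi^*\d\alpha\right)\) and \(\left(\U^{m-1}, \GL, \d\alpha\right)\), hence by \cite[Prop. 6.4]{ABM23} it maps magnetic geodesics to magnetic geodesics preserving energy. Pulling each integral \(F\) back by the tangent map \(T\varPhi\) therefore yields a function \(F \circ T\varPhi\) that is constant along every magnetic geodesic of \(\left(G^m, \GH, \varPhi^*\d\alpha\right)\), i.e. a conserved quantity of the latter. Because \(\varPhi\), and hence \(T\varPhi\), is a diffeomorphism, the pulled-back family inherits functional independence from the original one, so it remains infinite; this yields infinitely many conserved quantities of \(\left(G^m, \GH, \varPhi^*\d\alpha\right)\). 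Finally, \Cref{t: magnetic Hunter Saxton system} gives a one-to-one correspondence between magnetic geodesics of \(\left(G^m, \GH, \varPhi^*\d\alpha\right)\) and solutions \((u,\rho)\) of \eqref{(M2HS)}, under which each conserved quantity of the magnetic system becomes a conserved quantity of \eqref{(M2HS)}.

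The only genuine subtlety — and the point I would verify most carefully — is that the infinitude (functional independence) of the family survives both the restriction to the open set \(\U^{m-1}\) and the pullback by \(T\varPhi\). This is guaranteed precisely because \(\varPhi\) is simultaneously a magnetomorphism and a diffeomorphism, so independence is preserved rather than requiring a fresh construction of integrals on \(G^m\); everything else in the argument is a direct concatenation of \Cref{c: magnetic system SiL poss infty many integrals}, \Cref{t: Madelungtransform as Magnetomorphims}, and \Cref{t: magnetic Hunter Saxton system}.
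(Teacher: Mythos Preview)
Your proposal is correct and follows essentially the same approach as the paper: both transport the infinite family of integrals from \Cref{c: magnetic system SiL poss infty many integrals} through the magnetomorphism \(\varPhi\) of \Cref{t: Madelungtransform as Magnetomorphims}, and then invoke \Cref{t: magnetic Hunter Saxton system} to pass to \eqref{(M2HS)}. The paper states this in two sentences, while you spell out the restriction to \(\U^{m-1}\) and the preservation of independence under pullback, but the logical skeleton is identical.
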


\begin{proof}
	The fact that the magnetic system \(\left(G^m, \GH,  \vardalp\right)\) possesses infinitely many conserved quantities follows directly from \Cref{c: magnetic system SiL poss infty many integrals} in combination with \Cref{t: Madelungtransform as Magnetomorphims}. Thus, by \Cref{t: magnetic Hunter Saxton system}, the system \eqref{(M2HS)} also possesses infinitely many conserved quantities.
\end{proof}

\section{Geometry of blow-ups and global weak solutions}
In this section, we exploit the Hamiltonian structure of the system, in the sense of infinite-dimensional geometry, to obtain analytical insights into the singularity formation of \eqref{(M2HS)}. From the Hamiltonian perspective, there is a natural way to extend solutions of (M2HS) beyond blow-up in a weak sense. This approach ultimately leads to global weak solutions of \eqref{(M2HS)}.

We begin with an overview of the section. In \Cref{ss: 6.1 geoemtry of blow ups}, we establish precise blow-up criteria for \eqref{(M2HS)}. These criteria allow one to determine from the initial data whether a solution develops singularities; see \Cref{C: blow up M2HS with special initial values}. The analysis is based on the geometric interpretation of blow-up phenomena given in \Cref{t: geometric interpret of blow ups}.

In \Cref{ss: 6.2 global weak solutions}, we show how, after relaxing the ambient space, one can use geometric arguments to obtain global existence of a weak flow; see \Cref{d: weak magnetic geodesics in MAC} for the precise definition and \Cref{t: global existence weak geodesic flow}. By a duality argument similar to \Cref{t: magnetic Hunter Saxton system}, this yields the existence of global weak solutions of \eqref{(M2HS)} in the sense of \Cref{d: weak solutions}; see \Cref{t: global weak solutions }. 

Throughout this section, the arguments are primarily geometric; the more involved analytical details are deferred to \cite[§4]{Maier26Globalweak}. For the reader’s convenience, the corresponding statements are recalled at the beginning of the relevant discussions.
\subsection{Geometric criteria for blow-ups}\label{ss: 6.1 geoemtry of blow ups} First, we provide the \( C^0 \)-blowup of the first spatial derivative of \( u \) with a geometric interpretation by extending similar results on the Hunter-Saxton equation and the two-component Hunter-Saxton system in \cite{l07.1, Lennels13} to our setting. 

\begin{thm}\label{t: geometric interpret of blow ups}
	Let \((u, \rho)\) be a solution of \Cref{(M2HS)}, satisfying the prerequisites of \Cref{t: magnetic Hunter Saxton system}. Then there exists a \( T > 0 \) such that
	\[
	\lim_{t \nearrow T} \| u_x(t, \cdot) \|_{C^0} = \infty
	\]
	if and only if, for \((\varphi, \tau) \in G^m\), defined as in \Cref{t: magnetic Hunter Saxton system}, the following holds:
	\[
	\lim_{t \nearrow T} \varPhi\left( \varphi(t), \tau(t)\right) \in \partial \U^{m-1}.
	\]
\end{thm}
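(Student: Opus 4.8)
The plan is to transport the blow-up question through the Madelung transform $\varPhi$ and to exploit that the image curve $\gamma(t):=\varPhi\!\left(\varphi(t),\tau(t)\right)$ is a genuine magnetic geodesic on $\U^{m-1}\subseteq\SiL$, whose pointwise values obey a scalar linear ODE. Two pointwise identities drive everything. First, since $\varPhi(\varphi,\tau)=\sqrt{\varphi_x}\,e^{\i\tau/2}$, one has $|\gamma(t)(x)|^2=\varphi_x(t,x)$; as $\varphi\in\Diff_0^m(\SS^1)$ has $\varphi_x>0$, the curve satisfies $\gamma(t)\in\U^{m-1}$ for $t<T$, and by the very definition of $\U^{m-1}$ the condition $\lim_{t\nearrow T}\gamma(t)\in\partial\U^{m-1}$ is equivalent to $\min_{x}\varphi_x(t,\cdot)\to0$. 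Second, differentiating $\varPhi$ via \eqref{e: derivative of Madelung transform} along the curve and using $\varphi_t=u\circ\varphi$, $\tau_t=\rho\circ\varphi$ (as in \Cref{t: magnetic Hunter Saxton system}) gives $\dot\gamma=\tfrac{1}{2}\sqrt{\varphi_x}\,\bigl(u_x\circ\varphi+\i\,\rho\circ\varphi\bigr)e^{\i\tau/2}$, hence the identity $u_x\circ\varphi=2\,\Re\!\left(\dot\gamma/\gamma\right)$ at every point where $\gamma\neq0$. Thus the theorem reduces to proving that $\|u_x\|_{C^0}\to\infty$ if and only if $\min_x\varphi_x\to0$.

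The central structural input I would establish is that, for each fixed $x$, the complex scalar $z_x(t):=\gamma(t)(x)$ solves a constant-coefficient linear ODE. Indeed, by \Cref{t: Madelungtransform as Magnetomorphims} the curve $\gamma$ is a magnetic geodesic of $\left(\U^{m-1},\GL,\d\alpha\right)\subseteq\left(\SiL,\GL,\d\alpha\right)$ and therefore satisfies \eqref{e:magnetic geodesic equation in SIL}, i.e. $\ddot\gamma-\i s\dot\gamma+(1-s\cos\psi)\gamma=0$ in $L^2(\SS^1,\C)$. Since the contact angle $\cos\psi=\delta$ is conserved, evaluating this $L^2$-identity at a fixed $x$ shows that $z_x$ solves $\ddot z_x-\i s\dot z_x+(1-s\delta)z_x=0$, a linear ODE whose coefficients are independent of $t$. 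Moreover, by the dynamical reduction \Cref{cor: dynamical reduction of SiL to Sd}, $\gamma$ stays inside the finite-dimensional totally magnetic sphere $\Sd\!\left(\gamma(0),\dot\gamma(0)\right)$; as $\gamma(0)\in H^m$ and $\dot\gamma(0)\in H^{m-1}\hookrightarrow C^0$ (recall $m>\tfrac{5}{2}$), the curves $\gamma$ and $\dot\gamma$ extend to $t=T$ and are uniformly bounded in $C^0$ on $[0,T]$.

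With these facts I would prove the two implications. For the forward direction, the identity $u_x\circ\varphi=2\Re(\dot\gamma/\gamma)$ together with $|\gamma|=\sqrt{\varphi_x}$ gives the bound $\|u_x(t)\|_{C^0}\le 2\,\|\dot\gamma(t)\|_{C^0}\big/\sqrt{\min_x\varphi_x(t,\cdot)}$; since $\|\dot\gamma\|_{C^0}$ is uniformly bounded, $\|u_x\|_{C^0}\to\infty$ forces $\min_x\varphi_x\to0$, so that $\gamma(T)\in\partial\U^{m-1}$. For the reverse direction, assume $\gamma(t)\to g\in\partial\U^{m-1}$ and pick $x_\ast$ with $g(x_\ast)=0$, so $z_{x_\ast}(T)=0$. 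Because $z_{x_\ast}(0)=\sqrt{\varphi_x(0,x_\ast)}\,e^{\i\tau(0,x_\ast)/2}\neq0$, the function $z_{x_\ast}$ is a nontrivial solution of a second-order linear ODE, so uniqueness forbids $z_{x_\ast}$ and $\dot z_{x_\ast}$ from vanishing simultaneously; hence $\dot z_{x_\ast}(T)\neq0$. The Taylor expansion $z_{x_\ast}(t)=\dot z_{x_\ast}(T)(t-T)+O\!\left((t-T)^2\right)$ then yields $\dot z_{x_\ast}/z_{x_\ast}=(t-T)^{-1}+O(1)$, so that $u_x\circ\varphi$ at $x_\ast$ equals $2\Re(\dot z_{x_\ast}/z_{x_\ast})=\tfrac{2}{t-T}+O(1)\to-\infty$, forcing $\|u_x\|_{C^0}\to\infty$.

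I expect the main obstacle to be precisely this reverse implication: extracting the $\tfrac{2}{t-T}$ asymptotics and controlling its sign (which simultaneously reveals that the breakdown is one-sided, $u_x\to-\infty$) hinges on the non-simultaneous vanishing of $z_{x_\ast}$ and $\dot z_{x_\ast}$. The decisive simplification is to work with the complex scalar ODE for $z_x$ rather than with the real Riccati system for $(u_x\circ\varphi,\rho\circ\varphi)$ along characteristics: the $\rho$-dependent terms, which would render a naive Riccati analysis unbounded as $\varphi_x\to0$, are absorbed automatically into the linear equation for $z_x$. The remaining steps — the pointwise evaluation of the $L^2$-valued geodesic equation and the uniform $C^0$-bound on $\dot\gamma$ — are routine consequences of the finite-dimensional reduction and Sobolev embedding.
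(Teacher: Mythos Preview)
Your proof is correct, and it shares with the paper the same underlying identity: since $|\gamma(t)(x)|^2=\varphi_x(t,x)$, one has $u_x\circ\varphi=2\,\Re(\dot\gamma/\gamma)$, which is nothing but the paper's relation $\partial_t\ln(\varphi_x)=u_x\circ\varphi$. The paper stops there: it observes that $\varPhi(\varphi,\tau)\to\partial\U^{m-1}$ forces $\varphi_x(t,x_0)\to0$ at some point $x_0$, invokes the Fundamental Theorem of Calculus on $\ln\varphi_x$ to conclude that $(u_x\circ\varphi)(\cdot,x_0)$ must blow up, and declares every step reversible.

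What you do differently is to push the analysis one level deeper by recognising that the pointwise scalar $z_x(t)=\gamma(t)(x)$ obeys the constant-coefficient linear ODE inherited from \eqref{e:magnetic geodesic equation in SIL}. This buys you two things the paper's short argument does not explicitly secure. First, in the direction $\gamma(T)\in\partial\U^{m-1}\Rightarrow\|u_x\|_{C^0}\to\infty$, the uniqueness theorem for the ODE forbids simultaneous vanishing of $z_{x_\ast}$ and $\dot z_{x_\ast}$, so the Taylor expansion yields genuine asymptotics $u_x\circ\varphi\sim 2/(t-T)$ rather than just an unbounded integrand; this upgrades the paper's implicit ``$\liminf$'' conclusion to an honest limit and, as you note, even pins down the sign of the blow-up. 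Second, in the converse direction your uniform $C^0$-bound on $\dot\gamma$ (via the finite-dimensional reduction and Sobolev embedding) gives a clean inequality $\|u_x\|_{C^0}\le 2\|\dot\gamma\|_{C^0}/\sqrt{\min_x\varphi_x}$, avoiding the paper's appeal to ``reversibility'' of a step that, taken literally, would require tracking a fixed spatial point through the sup-norm blow-up. Your route is longer but closes these gaps and yields finer information; the paper's route is terse and relies on the reader supplying exactly the kind of details you have written out.
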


\begin{rem}[Geometric insight]
	\item The geometric idea behind \Cref{t: magnetic Hunter Saxton system} is intuitive, as we now illustrate. The green curve in \Cref{f: geometric blow up criteria} represents the image \( \varPhi(\varphi, \tau) \) of the Madelung transform of the magnetic geodesic \((\varphi, \tau)\). The curve $\varPhi(\varphi, \tau)$ lies in \( \U^{m-1} \), and by \Cref{t: Madelungtransform as Magnetomorphims}, is a magnetic geodesic in \( (\U^{m-1}, \GL, \d\alpha )\). In \Cref{f: geometric blow up criteria}, the green curve, $\varPhi(\varphi, \tau)$, intersects the boundary \( \partial \U^{m-1} \) if and only if the corresponding solution \((u, \rho)\) of \Cref{(M2HS)} exhibits a blowup in the sense of \Cref{t: magnetic Hunter Saxton system}. This geometric criterion provides a clear and intuitive way to connect the blowup behavior of \((u, \rho)\) with the behavior of the corresponding magnetic geodesics $\varPhi(\varphi, \tau)$ in \( (\U^{m-1}, \GL, \d\alpha )\).
\end{rem}
\begin{figure}
	\centering
	\includegraphics[width=0.5\textwidth]{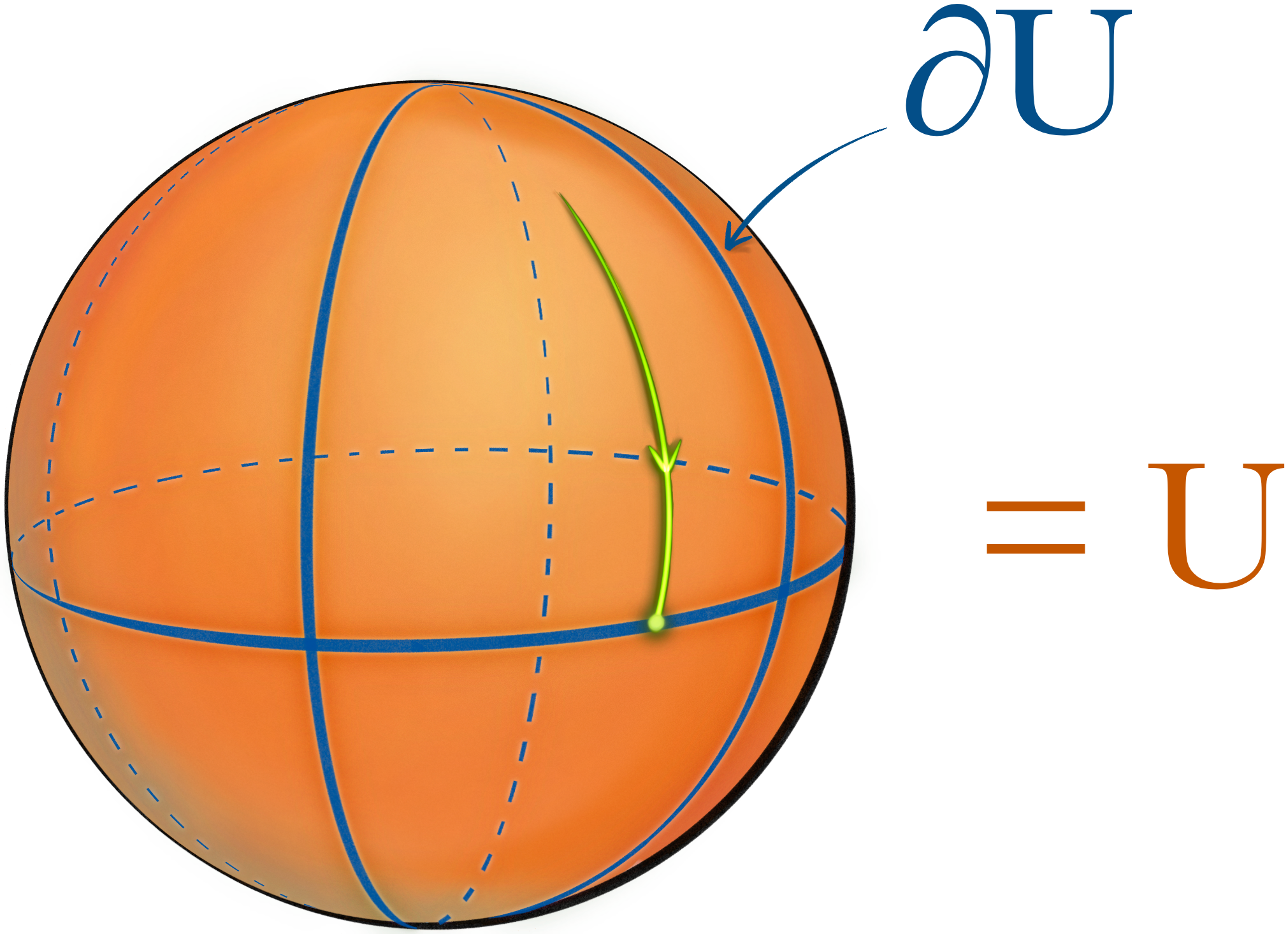}
	\caption{This is an illustration of \Cref{t: geometric interpret of blow ups}. The main idea is that \ref{(M2HS)} exhibits a blowup if and only if the corresponding magnetic geodesic \( \varPhi(\varphi, \tau) \) in $U$, shown as the green curve, intersects the boundary \( \partial U \). (Illustration by Ana Chavez Caliz.)}
	\label{f: geometric blow up criteria}
\end{figure}
\begin{proof}
Let $(\varphi, \tau)$ be as defined in \Cref{t: geometric interpret of blow ups}. By the definition of the Madelung transform (see \eqref{e: Madelung transform is isometry}) and the boundary condition on $\partial \U^{m-1}$, there exists a $T > 0$ and an $x_0 \in \SS^1$ such that 
\[
\lim_{t \nearrow T} \sqrt{\varphi_x(t,x_0)} e^{\i \tau(t,x_0)/2} = 0.
\]
Since the exponential map is nowhere vanishing, it follows that $\lim_{t \nearrow T} \sqrt{\varphi_x(t,x_0)} = 0$,
and hence  $\lim_{t \nearrow T} \varphi_x(t,x_0) = 0$. From the relation $\partial_t \ln (\varphi_x) = u_x \circ \varphi$, and applying the Fundamental Theorem of Calculus, we deduce that 
\[
\lim_{t \nearrow T} \vert (u \circ \varphi)(t,x_0) \vert = \infty.
\]
Since each step in the reasoning is reversible, this completes the proof.
\end{proof}

So in other words, \Cref{t: geometric interpret of blow ups} allows us to reformulate the problem of a possible blow-up of the \eqref{(M2HS)} as a problem in terms of magnetic geodesics in $(\SiL, \GL, \d\alpha)$. Due to the dynamical reduction theorem \Cref{cor: dynamical reduction of SiL to Sd}, this can be tackled with methods from \cite{ABM23}.
As a result of this, we can determine from the initial data of a solution of \eqref{(M2HS)} whether this solution has a blow-up:
\begin{cor}\label{C: blow up M2HS with special initial values}
	Let \((u, \rho)\) be a solution of \Cref{(M2HS)}, satisfying the prerequisites of \Cref{t: magnetic Hunter Saxton system}, with initial data $(u_0, \rho_0) \in T_{(\id,0)}G^m$ so that $\GH_{(\id,0)}((u_0,\rho_0), (u_0, \rho_0))=1$. Then $(u, \rho)$ has a blow-up in the sense of \Cref{t: geometric interpret of blow ups} if and only if there exists an $x_0$ such that $\rho_0(x_0) = s$. 
\end{cor}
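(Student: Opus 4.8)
The plan is to transport the question to the $L^2$-sphere via the Madelung transform and then solve the magnetic geodesic equation \eqref{e:magnetic geodesic equation in SIL} pointwise in $x$, reducing the whole statement to an elementary fact about a scalar second-order ODE with constant coefficients. First I would normalize so that the magnetic geodesic $(\varphi,\tau)$ associated to $(u,\rho)$ by \Cref{t: magnetic Hunter Saxton system} starts at the identity, $(\varphi(0),\tau(0))=(\id,0)$, and set $f(t):=\varPhi(\varphi(t),\tau(t))\in\SiL$. By \Cref{t: Madelungtransform as Magnetomorphims} this $f$ is a magnetic geodesic of $(\SiL,\GL,\d\alpha)$, and since $\varPhi$ is an isometry with $\GH_{(\id,0)}((u_0,\rho_0),(u_0,\rho_0))=1$ it has unit speed, hence is a unit-speed magnetic geodesic of $(\SiL,\GL,s\,\d\alpha)$. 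Its initial data are $f(0)=\varPhi(\id,0)\equiv1$ and, by \eqref{e: derivative of Madelung transform}, $\dot f(0)=(D\varPhi)_{(\id,0)}(u_0,\rho_0)=\tfrac12(u_{0x}+\mathrm{i}\rho_0)$. By \Cref{t: geometric interpret of blow ups} a blow-up of $(u,\rho)$ occurs precisely when $f$ reaches $\partial\U^{m-1}$, i.e.\ when $f(t_*,x_0)=0$ for some finite $t_*$ and some $x_0\in\SS^1$, so it only remains to decide when this happens.

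\textbf{Pointwise reduction.} The key observation is that \eqref{e:magnetic geodesic equation in SIL} is a linear ODE in the ambient Hilbert space whose coefficients are scalars: the conserved contact angle equals, by \eqref{e: contact angle}, $\cos\psi=\delta=\tfrac12\int_{\SS^1}\rho_0\,\d x$ (this is the concrete face of the reduction \Cref{cor: dynamical reduction of SiL to Sd}, which collapses the dynamics to two complex modes). Hence for each fixed $x$ the scalar $w(t):=f(t,x)$ solves
\[
\ddot w-\mathrm{i}s\,\dot w+(1-s\delta)\,w=0,\qquad w(0)=1,\quad \dot w(0)=\tfrac12\big(u_{0x}(x)+\mathrm{i}\rho_0(x)\big).
\]
The characteristic roots are $\mathrm{i}\beta_\pm$ with $\beta_\pm=\tfrac12(s\pm\sqrt\Delta)$ and $\Delta=(s-2\delta)^2+4(1-\delta^2)\ge0$. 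In the non-degenerate regime $\Delta>0$ both roots are purely imaginary, so $w(t)=c_+e^{\mathrm{i}\beta_+t}+c_-e^{\mathrm{i}\beta_-t}$, and such a combination vanishes for some real $t$ if and only if $|c_+|=|c_-|$. Solving for $c_\pm$ from the initial data and simplifying yields
\[
|c_+|^2-|c_-|^2=\frac{\rho_0(x)-s}{\sqrt\Delta},
\]
so $w$ has a zero in finite time if and only if $\rho_0(x)=s$.

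\textbf{Assembling the equivalence.} If $\rho_0(x_0)=s$ for some $x_0$, then $f(\cdot,x_0)$ vanishes at a finite time, so $T:=\inf\{t>0:\ f(t,\cdot)\in\partial\U^{m-1}\}$ is finite and positive (because $f(0)\equiv1$), producing a blow-up in the sense of \Cref{t: geometric interpret of blow ups}. Conversely, if $\rho_0(x)\neq s$ for all $x$, then continuity of $\rho_0$ and compactness of $\SS^1$ give $\min_x|\rho_0(x)-s|>0$; combined with the boundedness of the coefficients $c_\pm(x)$ this produces a \emph{uniform} lower bound $\inf_{t,x}|f(t,x)|>0$, so $f$ never meets $\partial\U^{m-1}$ and no blow-up occurs.

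\textbf{Main obstacle.} The delicate point is the degenerate case $\Delta=0$, which forces $\delta=\pm1$ and $s=2\delta$. The estimate $\int_{\SS^1}\rho_0^2\ge(\int_{\SS^1}\rho_0)^2$ shows this arises only for the Reeb data $\rho_0\equiv2\delta$, $u_{0x}\equiv0$, for which $f(t)=e^{\mathrm{i}st/2}$ has constant modulus and never reaches $\partial\U^{m-1}$; there $\rho_0\equiv s$ holds \emph{without} a blow-up, so the stated equivalence requires excluding this configuration, equivalently restricting to subcritical strength $|s|<2$, i.e.\ energy below the Mañé value computed in \Cref{L: Mane crit value for SIL}. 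Away from this knife-edge the only step needing genuine care is the uniform lower bound of the converse direction; everything else is the constant-coefficient ODE computation above.
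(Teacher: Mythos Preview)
Your argument is essentially the paper's own: transport the question via \Cref{t: Madelungtransform as Magnetomorphims} and \Cref{t: geometric interpret of blow ups} to the sphere, write the magnetic geodesic explicitly as a two-frequency superposition, and decide when it has a zero. Your route to the zero criterion is in fact cleaner than the paper's: instead of expanding via Euler's formula and comparing real and imaginary parts in a trigonometric case analysis, you use that $c_+e^{\mathrm i\beta_+t}+c_-e^{\mathrm i\beta_-t}$ vanishes for some real $t$ iff $|c_+|=|c_-|$, and the identity $|c_+|^2-|c_-|^2=(\rho_0(x)-s)/\sqrt\Delta$ drops out in one line. Your uniform lower bound for the converse direction is also more careful than the paper, which leaves the reverse implication to the reader. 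You are right that the Reeb configuration $\rho_0\equiv s$, $u_{0x}\equiv0$ at $|s|=2$ falsifies the stated equivalence; the paper sidesteps this only implicitly, by asserting in the proof of its auxiliary lemma that $\dot\gamma(0)\neq\mathrm i\gamma(0)$ without recording this hypothesis in the corollary. One small slip: $|s|<2$ corresponds via $s=1/\sqrt{2k}$ to $k>\tfrac18$, i.e.\ energy \emph{above} the Ma\~n\'e value, not below.
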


\begin{rems}		
	\item \textbf{Special case \(s = 0\):} By choosing \(s = 0\), we recover a similar statement in the case of the (2HS) from \cite[Cor. 3.5]{Lennels13}.
	\item \textbf{Solution formula:} This can be independently proven using a general solution formula for \eqref{(M2HS)}, which was developed by the author in~\cite[§3]{Maier26Globalweak}.
\end{rems}

Before proving \Cref{C: blow up M2HS with special initial values}, it is useful to reformulate the problem by describing magnetic geodesics of \((G^{m}, \GH, \vardalp)\) with energy \(k\) as unit speed magnetic geodesics for the family of systems \((G, \GH, s\vardalp)\), where \(s = \frac{1}{\sqrt{2k}}\) represents the strength of the magnetic geodesic. Moreover, by \Cref{t: Madelungtransform as Magnetomorphims} and \Cref{t: geometric interpret of blow ups}, the proof of \Cref{C: blow up M2HS with special initial values} is reduced to proving the following lemma.

\begin{lem}
	\label{l: magnetic geodesic touches bound of U iff rho(x)=s}
	Let $\gc$ be a unit speed magnetic geodesic in $(\U^{m-1}, \GL, s\d\alpha)$ with initial values \(\gc(0) = \varPhi(\id,0)\) and \(\dgc(0)= \D\varPhi(u_0,\rho_0)\), with $ (u_0,\rho_0) \in T_{(\id,0)}G^m$. Then there exists a \(T > 0\) such that 
	\[
	\lim_{t \nearrow T}\gamma(t) \in \partial\U^{m-1}
	\]
	if and only if there exists an \(x_0 \in \SS^1\) such that \(\rho(x_0) = s\).
\end{lem}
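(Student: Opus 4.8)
The plan is to exploit the fact that, along the magnetic geodesic $\gc$, the magnetic geodesic equation \eqref{e:magnetic geodesic equation in SIL} on $\SiL$ becomes a \emph{linear, constant-coefficient} second-order ODE, and hence can be integrated in closed form. First I would observe that, since $\U^{m-1}\subseteq\SiL$ is open and the equation is local, $\gc$ solves \eqref{e:magnetic geodesic equation in SIL} for $(\SiL,\GL,s\,\d\alpha)$ as long as it stays in $\U^{m-1}$. Evaluating \eqref{e: derivative of Madelung transform} at $(\id,0)$ gives $\gc(0)=\varPhi(\id,0)=1$ and $\dgc(0)=\D\varPhi(u_0,\rho_0)=\tfrac12(\partial_x u_0+\i\rho_0)$, and a short computation identifies the conserved contact angle as $\cos\psi=\Re\langle\i\gc,\dgc\rangle_{L^2}=\tfrac12\int_{\SS^1}\rho_0\,\d x=:\delta$, in agreement with \Cref{t: magnetic Hunter Saxton system}. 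Thus along $\gc$ the equation reads $\ddot\gc-\i s\,\dgc+(1-s\delta)\gc=0$, which is the structural heart of the argument.

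Next I would solve this ODE explicitly. Substituting $\gc=e^{\i s t/2}\eta$ eliminates the first-order term and reduces it to the harmonic oscillator $\ddot\eta+\Omega^2\eta=0$ with $\Omega^2=\tfrac{s^2}{4}+1-s\delta=\tfrac14\bigl((s-2\delta)^2+4\sin^2\psi\bigr)\ge0$. In the generic case $\Omega>0$, inserting $\eta(0)=1$ and $\dot\eta(0)=\dgc(0)-\tfrac{\i s}{2}=\tfrac12(\partial_x u_0+\i(\rho_0-s))$ yields
\[
\gc(t,x)=e^{\i s t/2}\left[\cos(\Omega t)+\frac{\sin(\Omega t)}{2\Omega}\bigl(\partial_x u_0(x)+\i(\rho_0(x)-s)\bigr)\right],
\]
which is consistent with the dynamical reduction \Cref{cor: dynamical reduction of SiL to Sd}, as $\gc(t)$ stays in $\mathrm{span}_{\CC}\{1,\partial_x u_0+\i\rho_0\}$, and has a genuine pointwise meaning since $m-1>\tfrac32$ makes $\partial_x u_0,\rho_0$ continuous. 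Because $e^{\i s t/2}\ne0$, the function $\gc(t,\cdot)$ vanishes at $x$ iff the bracket does; splitting into real and imaginary parts, the imaginary part forces $\tfrac{\sin(\Omega t)}{2\Omega}(\rho_0(x)-s)=0$, and since $\sin(\Omega t)=0$ would leave the real part equal to $\pm1\ne0$, a zero can occur only where $\rho_0(x)=s$, at a time with $\partial_x u_0(x)=-2\Omega\cot(\Omega t)$. This already makes both implications visible: if no $x_0$ satisfies $\rho_0(x_0)=s$ the bracket never vanishes, whereas if some $x_0$ does, then since $t\mapsto-2\Omega\cot(\Omega t)$ is a strictly increasing bijection of $(0,\pi/\Omega)$ onto $\RR$, there is a finite time at which $\gc(t,x_0)=0$.

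Finally I would upgrade this pointwise analysis to the stated claim about $\lim_{t\nearrow T}\gc(t)\in\partial\U^{m-1}$, where I expect the only real work. The clean route is the geometric picture: for fixed $x$, the rotated curve $t\mapsto e^{-\i s t/2}\gc(t,x)$ traces an origin-centered ellipse, which degenerates to a segment through $0$ exactly when $\rho_0(x)=s$. When $\rho_0\ne s$ \emph{everywhere}, compactness of $\SS^1$ (so that $|\rho_0-s|$ is bounded below and $\partial_x u_0,\rho_0$ are bounded) gives a uniform positive lower bound on the semi-minor axes, hence $\min_x|\gc(t,x)|\ge b_0>0$ for all $t$, so $\gc$ remains a definite distance from the boundary and no such $T$ exists. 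When $\{\rho_0=s\}\ne\varnothing$, I would set $T:=\inf\{t>0:\exists x,\ \gc(t,x)=0\}$ and use continuity of $\partial_x u_0$ on the compact set $\{\rho_0=s\}$ to show $T$ is attained and finite (in fact $T<\pi/\Omega$, realized at a minimizer of $\partial_x u_0$), with $\gc(t)\in\U^{m-1}$ for $t<T$ and $\gc(T)\in\partial\U^{m-1}$. The degenerate case $\Omega=0$, which forces $\delta=\pm1$ and $s=\pm2$, is treated identically from the linear-in-$t$ solution $\gc(t)=e^{\i s t/2}\bigl(1+\tfrac{t}{2}(\partial_x u_0+\i(\rho_0-s))\bigr)$. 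The main obstacle is therefore not the ODE itself but this uniform-in-$t$ control of $\min_x|\gc(t,x)|$, which is precisely what rules out a merely asymptotic approach to $\partial\U^{m-1}$ and turns the equivalence into an exact one.
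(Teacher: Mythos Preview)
Your proof is correct and follows essentially the same strategy as the paper: integrate the linear constant-coefficient ODE \eqref{e:magnetic geodesic equation in SIL} in closed form and read off, pointwise in $x$, when the solution can vanish. The paper writes the solution in eigenmode form $Ae^{\i\theta_1 t}+Be^{\i\theta_2 t}$ imported from \cite{ABM23} and then splits \eqref{e: gamma in parttial U iff and only if = 0 with init values} into real and imaginary parts, whereas your substitution $\gamma=e^{\i s t/2}\eta$ reduces to a harmonic oscillator and makes the criterion $\rho_0(x_0)=s$ appear immediately from the imaginary part of the bracket; the two computations are equivalent since $\theta_{1,2}=\tfrac{s}{2}\pm\Omega$. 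Your ellipse picture and the uniform lower bound on $\min_x|\gamma(t,x)|$ via compactness of $\SS^1$ go a step beyond the paper, which does not explicitly argue that $\gamma$ actually reaches $\partial\U^{m-1}$ at a finite first time rather than merely approaching it. One small caveat: your sentence that the degenerate case $\Omega=0$ ``is treated identically'' is not quite right, since $\Omega=0$ forces $\delta=\pm1$, hence $\dot\gamma(0)=\pm\i\gamma(0)$, $\partial_x u_0\equiv0$, $\rho_0\equiv s$, and $\gamma(t)=e^{\i st/2}$ never vanishes; the paper handles this by simply declaring $\i\gamma(0)\neq\dot\gamma(0)$, i.e.\ tacitly excluding the Reeb orbit, and you should do the same.
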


\begin{proof}
	Let $\gc$ be as described in \Cref{l: magnetic geodesic touches bound of U iff rho(x)=s}. Then, $\gc$ is also a magnetic geodesic in $\left(\SiL, \GL, \d\alpha \right)$. Consequently, by \Cref{cor: dynamical reduction of SiL to Sd}, it is also a magnetic geodesic in $(\Sd, g, \d\alpha)$. According to \cite[Thm. 1.9, Eq. 3.3, Rmk. 3.1]{ABM23}, the geodesic $\gc(t)$ takes the explicit form:
	\begin{equation}\label{e: explicit form of magnetic geodesics on S3}
		\gc(t) = \frac{1}{\theta_2-\theta_1}\left( \theta_2 \gc(0) + \i \dgc(0) \right)e^{\i\theta_1 t} 
		+ \frac{1}{\theta_1-\theta_2}\left( \theta_1 \gc(0) + \i \dgc(0) \right)e^{\i\theta_2 t},
	\end{equation}
	where the parameters $\theta_1$ and $\theta_2$ are defined as 
	\begin{equation}\label{e: definition theta1/theta 2}
		\theta_{1/2} = \frac{s}{2} \pm \vert C_s(\psi) \vert = \frac{s \pm \sqrt{s^2 - 4(1 - s\delta)}}{2}.
	\end{equation}
	By explicitly using the initial values $\gc(0)$ and $\dgc(0)$ as provided in \Cref{l: magnetic geodesic touches bound of U iff rho(x)=s}, \eqref{e: derivative of Madelung transform}, and \eqref{e: explicit form of magnetic geodesics on S3}, it follows that there exists a time $T > 0$ such that $\gc(T) \in \partial\U^{m-1}$ if and only if:
	\begin{equation}\label{e: gamma in parttial U iff and only if = 0 with init values}
		0 = \frac{1}{\theta_2-\theta_1}\left( \theta_2 + \frac{\i}{2} \left(u_x(x_0) + \i \rho(x_0)\right) \right)e^{\i\theta_1 T} 
		+ \frac{1}{\theta_1-\theta_2}\left( \theta_1 + \frac{\i}{2} \left(u_x(x_0) + \i \rho(x_0)\right) \right)e^{\i\theta_2 T}.
	\end{equation}
	By applying Euler's formula to the complex exponential functions in \eqref{e: gamma in parttial U iff and only if = 0 with init values} and comparing the real and imaginary parts, we obtain that \eqref{e: gamma in parttial U iff and only if = 0 with init values} holds true if and only if:
	\begin{align}
		\theta_1+\frac{1}{2}\rho(x_0) &= \cos((\theta_1-\theta_2)T)(\theta_2+\rho(x_0)) - \sin((\theta_1-\theta_2)T)\frac{1}{2}u_x(x_0), \label{e: equation for rho(x0)} \\
		\frac{1}{2}u_x(x_0) &= \sin((\theta_1-\theta_2)T)\left(\theta_2+\frac{1}{2}\rho(x_0)\right)+\cos((\theta_1-\theta_2)T) \frac{1}{2}u_x(x_0). \label{e: equation for u_x(x_0)}
	\end{align}
	Now there are two cases: either $1-\cos((\theta_1-\theta_2)T) = 0$ or $1-\cos((\theta_1-\theta_2)T) \neq 0$. The first case cannot occur, since it implies $(\theta_1-\theta_2)T \in \frac{2\pi}{\theta_1-\theta_2}\ZZ$, so $\sin((\theta_1-\theta_2)T) = 0$. This allows us to rewrite \eqref{e: equation for rho(x0)} and \eqref{e: equation for u_x(x_0)} as:
	\begin{align*}
		\theta_1+\rho(x_0) &= \theta_2+\rho(x_0) \Longrightarrow 0 = \theta_1-\theta_2 = \sqrt{s^2+4(1-s\delta)},
	\end{align*}
	which implies, following the computations in \cite[§3]{ABM23}, that $\gc$ is a Reeb orbit in $(\SiL, \alpha)$. However, this is not the case as $\i \gc(0) \neq \dgc(0)$. 
	
	In the second case, we can solve \eqref{e: equation for u_x(x_0)} for $\frac{1}{2}u_x(x_0)$, substitute this expression into \eqref{e: equation for rho(x0)}, and obtain, using \eqref{e: definition theta1/theta 2}:
	\begin{equation*}
		\theta_1+\frac{1}{2}\rho(x_0) = -\theta_2-\frac{1}{2}\rho(x_0) \Longrightarrow \rho(x_0) = s.
	\end{equation*}
	The reverse direction of the proof follows exactly the same lines as the forward direction and is left as an exercise for the interested reader. This concludes the proof.
\end{proof}
\subsection{The Existence of Global Weak Solutions of (M2HS) through the Geometric Lens}\label{ss: 6.2 global weak solutions}

At this point, the reader might be wondering about the fact that the magnetic geodesic $\gc$ in \Cref{l: magnetic geodesic touches bound of U iff rho(x)=s}, visualized as the green curve in \Cref{f: geometric blow up criteria}, does not exhibit a blow-up. This raises the question: what happens to the corresponding solution of \eqref{(M2HS)} if we continue the magnetic geodesic—the green curve in \Cref{f: geometric blow up criteria}—beyond the boundary $\partial U^{m-1}$?

Before answering this question, we note that in this paper, we focus on the geometric constructions associated with this situation. The analytical details regarding weak solutions will be provided in \cite{Maier26Globalweak}. 

We begin by introducing the natural completion $\M_{H^1}$ of $G^m$ through the lens of geometry, which we intuitively think of as the inverse of the Madelung transform applied to $\SiL$. This is defined as in \cite{wu11}:
\begin{equation}\label{e: definition M_AC}
	\MAC := M_{H^1} \rtimes L^2\left( \SS^1, \RR \right),
\end{equation}
where the group multiplication is defined as on $G^m$, and $M_{H^1}$ is defined as:
\[
M_{H^1} := \{ f \in H^1\left([0,1], [0,1]\right) : f(0) = 0, \ f(1) = 1, \ f \ \text{is nondecreasing}\}.
\]

\begin{rem}\label{r: MAC as natural completion of Gm}
	The space $\MAC$ is the natural completion of $G^m$. As we have seen in the proof of \Cref{t: geometric interpret of blow ups}, a solution $(u,\rho)$ has a blow-up if and only if, for the corresponding magnetic geodesic $(\varphi,\tau)$ in $(G^m, \GH, \vardalp)$, there exists a $T>0$ and an $x_0 \in \SS^1$ such that $\lim_{t \nearrow T}\varphi_x(x_0,t)=0$. In other words, $\lim_{t \nearrow T}\varphi(\cdot, t)$ is no longer an element of $\Diff_0(\SS^1)$, with $\SS^1 = \RR/\ZZ$, but is still an element in $M_{H^1}$. 
\end{rem}

Thus, we can naturally extend the magnetic system $(G^m, \GH, \vardalp)$ to $\left(\MAC, \GH, \vardalp \right)$. With this extension in mind, we define the notion of a weak magnetic geodesic as follows:

\begin{defn}[{\cite[Def. 4.4]{Maier26Globalweak}}]\label{d: weak magnetic geodesics in MAC}
A curve \( (\varphi, \tau) \) with initial data \( (u_0, \rho_0) \in T_{(\id, 0)}\MAC \) is called a \emph{weak magnetic geodesic} of \( (\MAC, \GH, \varPhi^* \mathrm{d}\alpha) \) if the following conditions are satisfied:
\begin{enumerate}[label=(\arabic*)]
	\item \label{it:weak_geod:in_MAC}
	The curve \( (\varphi, \tau) \) remains in \( \MAC \) for all \( t \geq 0 \); that is,
	\[
	\bigl[x\mapsto(\varphi(t, x), \tau(t, x))\bigr] \in \MAC \quad \forall t \in [0, \infty) \, .
	\]
	
	\item \label{it:weak_geod:tangent}
	The time derivative \( (\varphi_t, \tau_t) \) lies in the tangent space \( T\MAC \) for all \( t \geq 0 \); that is,
	\[
	\bigl[x\mapsto(\varphi_t(t, x), \tau_t(t, x))\bigr] \in T_{(\varphi(t, \cdot), \tau(t, \cdot))} \MAC \quad \forall t \in [0, \infty)
	\, .
	\]
	
	\item \label{it:weak_geod:energy}
	The kinetic energy is constant almost everywhere in time; that is, for almost every \( t \in [0, \infty) \),
	\[
	\left\langle (\varphi_t(t), \tau_t(t)), (\varphi_t(t), \tau_t(t)) \right\rangle_{\dot{H}^1, (\varphi, \tau)} =
	\left\langle (\varphi_t(0), \tau_t(0)), (\varphi_t(0), \tau_t(0)) \right\rangle_{\dot{H}^1, (\id, 0)} \, .
	\]
	Moreover, the contact angle is conserved almost everywhere; that is, for almost every \( t \in [0, \infty) \),
	\[
	\delta := \int_{\SS^1} \rho_0(x)\, \mathrm{d}x = \int_{\SS^1} \tau_t(t, x)\, \mathrm{d}x \, .
	\]
	
	\item \label{it:weak_geod:equation}
	For almost every \( t \in [0, \infty) \), the curve \( (\varphi, \tau) \) satisfies the magnetic geodesic equation as given in \Cref{Prop: magnetic geodesic equation in G}, corresponding to the magnetic system \( (G^m, \GH, \varPhi^* \mathrm{d}\alpha) \).
\end{enumerate}
We further call \( (\varphi, \tau) \) a \emph{unit speed weak magnetic geodesic} if the initial data satisfies
\[
\left\langle (u_0, \rho_0), (u_0, \rho_0) \right\rangle_{\dot{H}^1} = 1 \, .
\]
\end{defn}

\begin{thm}[{\cite[Thm. 4.5]{Maier26Globalweak}}]\label{t: global existence weak geodesic flow}
Assume $(u_0,\rho_0)\in T_{(\id,0)}\MAC$ satisfies $\langle (u_0,\rho_0),(u_0,\rho_0)\rangle_{\dot H^1}=1$ and define $\theta_{1/2}$ by
\[
\theta_{1/2}=\frac{s\pm\sqrt{s^2+4(1-s\delta)}}{2}.
\]
For $(t,x)\in[0,\infty)\times\SS^1$ set
\begin{equation}\label{eq:phi_def_global}
	\varphi(t,x)
	=
	\int_0^x
	\left|
	\frac{1}{\theta_2-\theta_1}\Bigl[
	\theta_2 e^{\i \theta_1 t}-\theta_1 e^{\i\theta_2 t}
	+\frac{\i}{2}\bigl(u_{0,x}(y)+\i \rho_0(y)\bigr)\bigl(e^{\i\theta_1 t}-e^{\i\theta_2 t}\bigr)
	\Bigr]
	\right|^2
	\,\d y,
\end{equation}
and
\begin{equation}\label{eq:tau_def_global}
	\tau(t,x)
	=
	\rho_0(x)\int_0^t
	\chi_{\{\varphi_x(s,\cdot)>0\}}\,\frac{1}{\varphi_x(s,x)}\,\d s.
\end{equation}
Then $(\varphi,\tau)$ is a global unit speed weak magnetic geodesic on $(\MAC,\GH,\varPhi^*\d\alpha)$.
\end{thm}

\begin{figure}
	\centering
	\includegraphics[width=0.5\textwidth]{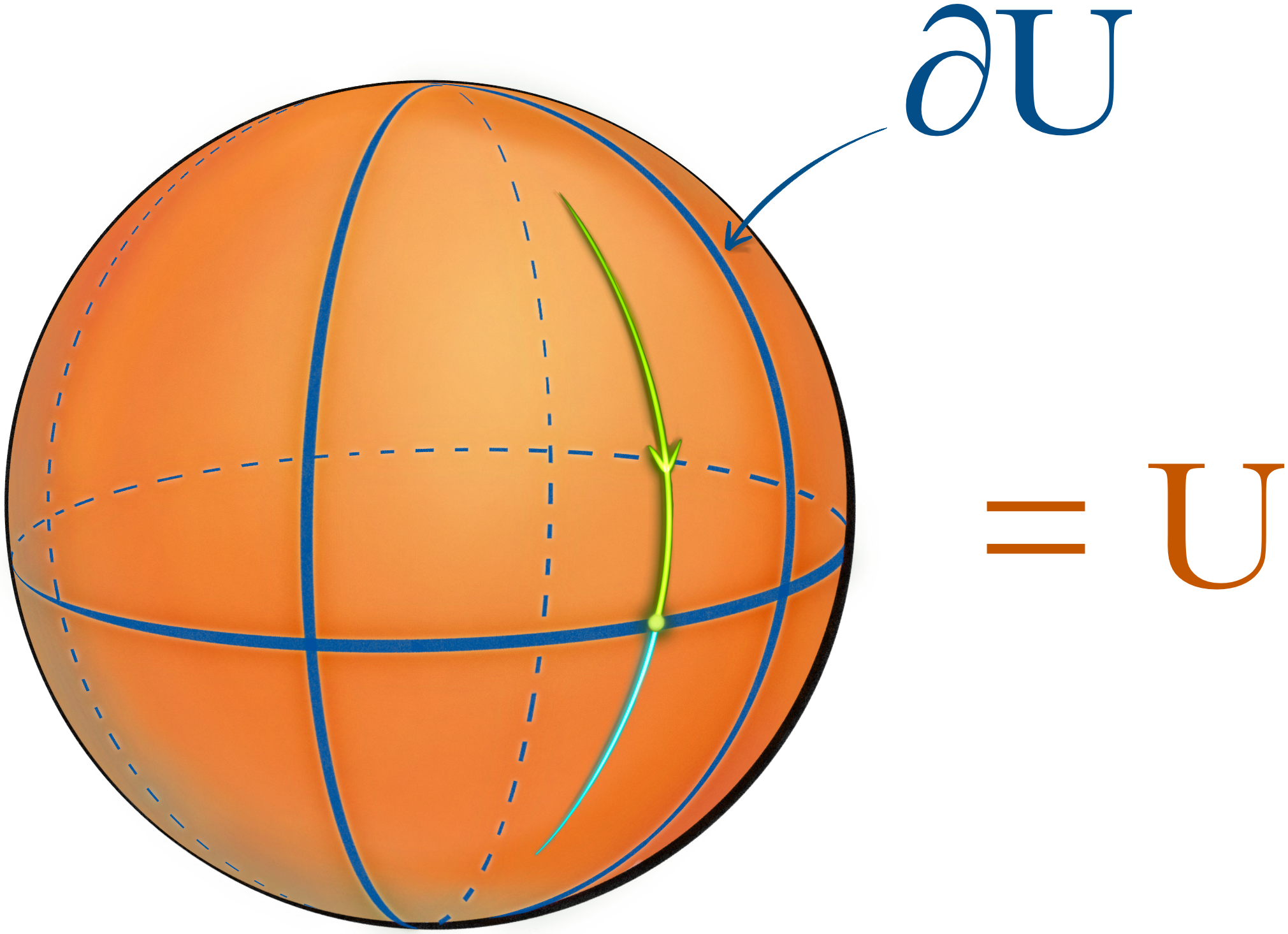}
	\caption{Illustration of the proof of \Cref{t: global existence weak geodesic flow}. The key idea is that, using \Cref{t: Madelungtransform as Magnetomorphims} and \Cref{t: geometric interpret of blow ups}, it suffices to extend the green curve $(\varPhi(\varphi), \varPhi(\tau))$ beyond the boundary. As $(\varPhi(\varphi), \varPhi(\tau))$ is, by \Cref{t: Madelungtransform as Magnetomorphims} and \cite[Prop. 6.4]{ABM23}, a magnetic geodesic in $(\U, \GL, \d\alpha)$, and therefore also in $(\SiL, \GL, \d\alpha)$, the corresponding magnetic geodesic, represented by the blue curve, exists globally by \Cref{cor: dynamical reduction of SiL to Sd}. This allows the extension of the green curve beyond $\partial \U$. (Illustration by Ana Chavez Caliz.)}
	\label{f: geometric construct global weak magnetic geodesic}
\end{figure}

\begin{proof}
	We present a geometrically inspired proof strategy, with the detailed analytical arguments outlined in \cite[§4.4]{Maier26Globalweak}. Let $(\varphi, \tau)$ be a magnetic geodesic in $(\MAC, \GH, \vardalp)$. By \Cref{t: Madelungtransform as Magnetomorphims} and \cite[Prop. 6.4]{ABM23}, the curve $(\varPhi(\varphi), \varPhi(\tau))$ is a magnetic geodesic in $(\U, \GL, \d\alpha)$, which we denote by $\gc$. By the definition of $\U$, the curve $\gc$ is also a magnetic geodesic in $(\SiL, \GL, \d\alpha)$. Therefore, by \Cref{cor: dynamical reduction of SiL to Sd}, the magnetic geodesic $\gc$ exists globally in $(\SiL, \GL, \d\alpha)$. 
	
	Using \Cref{t: geometric interpret of blow ups}, it suffices to extend $(\varPhi(\varphi), \varPhi(\tau))$ beyond the boundary $\partial \U$. This extension is achieved by following the magnetic geodesic $\gc$, which allows $(\varPhi(\varphi), \varPhi(\tau))$ to continue beyond $\partial \U$ within $\U$. As a result, $(\varPhi(\varphi), \varPhi(\tau))$ exists globally. By applying the inverse of the Madelung transform, we conclude that the corresponding magnetic geodesic in $(\MAC, \GH, \vardalp)$ also exists globally, as illustrated in \Cref{f: geometric construct global weak magnetic geodesic}.
\end{proof}
In the spirit of \Cref{t: magnetic Hunter Saxton system}, we aim to use \Cref{t: global existence weak geodesic flow} to establish global weak solutions of \eqref{(M2HS)}. To achieve this, we extend the previous work on the geodesic case in \cite{l07.1}, \cite{wu11}, introducing the notion of weak solutions for \eqref{(M2HS)} as follows:

\begin{defn}[{\cite[Def. 4.8.]{Maier26Globalweak}}]\label{d: weak solutions}
	A pair \( (u, \rho) \colon [0,\infty) \times \SS^1 \to \RR \) is called a \emph{global conservative weak solution} of~\eqref{(M2HS)} with initial data \( (u_0, \rho_0) \in H^1(\SS^1, \RR) \times L^2(\SS^1, \RR) \) if and only if the following hold:
	\begin{enumerate}[label=(\arabic*)]
		\item \label{it:weak_sol:H1}
		For all \( t \in [0, \infty) \), we have \( x\mapsto u(t, x) \in H^1(\SS^1, \RR) \).
		
		\item \label{it:weak_sol:cont_init}
		The map \( u \colon [0,\infty) \times \SS^1 \to \RR \) is continuous, and the initial conditions are satisfied pointwise and almost everywhere, respectively:
		\[
		u(0, x) = u_0(x) \quad \text{for all } x \in \SS^1, 
		\qquad 
		\rho(0, x) = \rho_0(x) \quad \text{for a.e. } x \in \SS^1.
		\]
		
		\item \label{it:weak_sol:Linfty}
		The maps \( t \mapsto u_x(t,\cdot) \) and \( t \mapsto \rho(t,\cdot) \) belong to \( L^\infty([0,\infty), L^2(\SS^1, \RR)) \).
		
		\item \label{it:weak_sol:equation}
		The map \( t \mapsto u(t,\cdot) \) is absolutely continuous from \( [0,\infty) \) into \( L^2(\SS^1, \RR) \), and for almost every \( t \in [0,\infty) \), the pair \( (u(t,\cdot), \rho(t,\cdot)) \) satisfies the equation in \Cref{Prop: magnetic geodesic equation in G} evaluated at \( (\id, 0) \); that is,
		\[
		\begin{pmatrix}
			u_t + u u_x \\
			\rho_t + u \rho_x
		\end{pmatrix}
		=
		\begin{pmatrix}
			-\frac{1}{2} A^{-1} \partial_x (u_x^2 + \rho^2) - s \cdot \int_0^x \left( \rho - \int_{\SS^1} \rho \, \mathrm{d}x \right) \mathrm{d}x \\
			-(\rho u)_x + s u_x
		\end{pmatrix}
		\quad \text{in } L^2(\SS^1, \RR) \times L^2(\SS^1, \RR) .
		\]
	\end{enumerate}
\end{defn}
Through the lens of Hamiltonian dynamics, the next theorem follows directly from \Cref{t: magnetic Hunter Saxton system} and \Cref{t: global weak solutions }. However, this requires rigorous analytical work, which has been carried out in \cite[§4.5]{Maier26Globalweak} by the author in a more general setting.

\begin{thm}[{\cite[Thm. 4.12.]{Maier26Globalweak}}]\label{t: global weak solutions }
	Let \( (\varphi, \tau) \) be a weak unit speed magnetic geodesic in \( \MAC \) with initial data \( (u_0, \rho_0) \in H^1(\SS^1, \RR) \times L^2(\SS^1, \RR) \).\\
	Then the pair defined by
	\begin{equation}\label{eq:weak_solution_def}
		\begin{pmatrix}
			u(t, \varphi(t,x)) \\
			\rho(t, \varphi(t,x))
		\end{pmatrix}
		:=
		\begin{pmatrix}
			\varphi_t(t,x) \\
			\tau_t(t,x)
		\end{pmatrix}, \quad \text{for } (t,x) \in [0,\infty) \times \SS^1,
	\end{equation}
	is a global conservative weak solution of~\eqref{(M2HS)} with initial data \( (u_0, \rho_0) \).
	
	Furthermore, the energy is an integral of motion almost everywhere in time; that is, for almost every \( t \in [0, \infty) \),
	\begin{equation}\label{eq:energy_integral_of_motion}
		\left\langle (u(t), \rho(t)), (u(t), \rho(t)) \right\rangle_{\dot{H}^1}
		=
		\left\langle (u_0, \rho_0), (u_0, \rho_0) \right\rangle_{\dot{H}^1}
		= 1\, .
	\end{equation}
	In addition, the contact angle is conserved almost everywhere; that is, for almost all \( t \in [0, \infty) \),
	\[
	\delta = \int_{\SS^1} \rho_0(x)\, \mathrm{d}x = \int_{\SS^1} \tau_t(t,x)\, \mathrm{d}x \, .
	\]
\end{thm}

\begin{rems}
	\item \textbf{Special case \(s=0\):} By choosing \(s=0\) in \Cref{Prop: magnetic geodesic equation in G}, we recover the existence of global weak solutions for the two-component Hunter-Saxton system. This corresponds to the main result presented in \cite{wu11}.
	
	\item \textbf{Special case \(\rho \equiv s\):} By setting \(\rho = s\) in \Cref{Prop: magnetic geodesic equation in G}, we recover the existence of global weak solutions for the Hunter-Saxton equation. This result is the main focus of \cite{l07.2}.
	
	\item \textbf{Fixing the kinetic energy:} Fixing the \(\dot{H}^1\)-energy of a magnetic geodesic in \((\MAC, \GH, s\cdot \vardalp)\) to 
	\[
	\GH_{(\id,0)}((u_0,\rho_0), (u_0,\rho_0))=1,
	\] 
	corresponds to considering unit-speed magnetic geodesics in \((\SiL, \GL, s\cdot \d\alpha)\). This equivalence arises due to the normalization factor in the exponent of \eqref{e: Madelung transform is isometry}.
\end{rems}

\section{Mañé's critical value and the Hopf--Rinow theorem for the (M2HS)}
The goal of this section is to extend \Cref{t:mane for SiL} to the magnetic system 
\((\MAC, \GH, \vardalp)\) and, based on this extension, to establish a Hopf--Rinow type theorem for solutions of \eqref{(M2HS)}. This constitutes one of the main results of the present work:

\begin{thm}\label{t: Manes critical value for (M2HS)}
	Mané's critical value for the system is given by:
	\[
	c(\MAC, \GH, \vardalp) = \tfrac{1}{2}\Vert\alpha\Vert_\infty^2 = \tfrac{1}{8}.
	\]
	Let \(q_0 := (\varphi_0, \tau_0)\) and \(q_1 := (\varphi_1, \tau_1)\) be two points on \(\MAC\), and denote by \(\langle \varPhi(q_0), \varPhi(q_1)\rangle_{L^2}\) the \(L^2\)-Hermitian product of their images under the Madelung transform.\\
	For every \(k > 0\), let \(\mathcal{G}_k(q_0,q_1)\) denote the set of magnetic geodesics with energy \(k\) connecting \(q_0\) and \(q_1\). The following cases hold:
	\begin{enumerate}
		\item\label{it:1 mane M2HS} If \(k > \tfrac{1}{8}\), then \(\mathcal{G}_k(q_0,q_1) \neq \varnothing\).
		\item\label{it:2 mane M2HS}  If \(k = \tfrac{1}{8}\), then \(\mathcal{G}_k(q_0,q_1) \neq \varnothing\) if and only if \(\langle \varPhi(q_0), \varPhi(q_1)\rangle_{L^2} \neq 0\).
		\item\label{it:3 mane M2HS}  If \(k < \tfrac{1}{8}\), the following subcases apply:
		\begin{enumerate}
			\item\label{it:3a mane M2HS}  If \(\left\vert \langle \varPhi(q_0), \varPhi(q_1)\rangle_{L^2} \right\vert > \sqrt{1-8k}\), then \(\mathcal{G}_k(q_0,q_1) \neq \varnothing\).
			\item\label{it:3b mane M2HS}  If \(\left\vert \langle \varPhi(q_0), \varPhi(q_1)\rangle_{L^2} \right\vert = \sqrt{1-8k}\), there exist \(a_k, b_k \in \mathbb{R}\) with \(b_k > 0\) such that \(\mathcal{G}_k(q_0,q_1) \neq \varnothing\) if and only if 
			\[
			\langle \varPhi(q_0), \varPhi(q_1)\rangle_{L^2} = e^{\mathrm{i}(a_k + mb_k)}\sqrt{1-8k},
			\]
			for some \(m \in \mathbb{Z}\).
			\item\label{it:3c mane M2HS}  If \(\left\vert \langle \varPhi(q_0), \varPhi(q_1)\rangle_{L^2} \right\vert < \sqrt{1-8k}\), then \(\mathcal{G}_k(q_0,q_1) = \varnothing\).
		\end{enumerate}
	\end{enumerate}
\end{thm}
Before proving \Cref{t: Manes critical value for (M2HS)}, we outline the essential ingredients of the argument, following the strategy of \Cref{t:mane for SiL}. The technically involved analysis is carried out in \Cref{ss: app Weak magnetic geodesics are mapped to magnetic geodesics}.
\begin{proof}
The computation of Mañé's critical value for the system follows the same reasoning as in \Cref{L: Mane crit value for SIL}; the details are left to the reader.\\
Part~(\ref{it:1 mane M2HS}) follows directly from the fact that the inverse of the Madelung transform $\varPhi$ is surjective from $\U$ onto $\MAC$ and from \Cref{t:mane for SiL}.\\
To prove Parts~(\ref{it:2 mane M2HS}) and~(\ref{it:3 mane M2HS}), we first establish the following:
\begin{enumerate}[label=(\Roman*)]
	\item\label{it: 1 proof mane M2HS}
	If $(\varphi, \tau)$ is a weak magnetic geodesic of $(\MAC, \GH, \vardalp)$, then
	\[
	\gamma := \sqrt{\varphi_x}\, e^{\i \tau/2}
	\]
	is a magnetic geodesic of $(\SiL, \GL, \mathrm{d}\alpha)$. In particular, $\gamma$ is $C^2$ in time as a map from $\RR$ to $\SiL$.

	\item\label{it: 2 proof mane M2HS}
	The quantity $\GL_{\gamma}(\dot{\gamma}, \dot{\gamma})$ is conserved along the flow.
\end{enumerate}
Statement~\ref{it: 1 proof mane M2HS} is proven in \Cref{Prop:app Weak magnetic geodesics in MAC are mapped to magnetic geodesics in Sil}. The part~(\ref{it:2 mane M2HS}) now follows directly from \ref{it: 1 proof mane M2HS} and \ref{it: 2 proof mane M2HS} in combination with Part~(\ref{it: 2 Mane for SiL}) of \Cref{t:mane for SiL}.\\
Finally, Part~(\ref{it:3 mane M2HS}) follows from \ref{it: 1 proof mane M2HS} and \ref{it: 2 proof mane M2HS} together with Part~(\ref{it: 3 Mane for SiL}) of \Cref{t:mane for SiL}. This completes the proof.
\end{proof}
\appendix
\section{Weak magnetic geodesics are mapped to magnetic geodesics of $\SiL$}\label{ss: app Weak magnetic geodesics are mapped to magnetic geodesics}
\begin{prop}\label{Prop:app Weak magnetic geodesics in  MAC   are mapped to magnetic geodesics in Sil}
       Let $(\varphi,\tau)$ be a unit speed weak magnetic geodesic of $(\MAC,\GH, \varPhi^*\d\alpha)$ of strenght $s$ with initial values $(\varphi(0),\tau(0))=(\id,0)$ and $(\varphi_t(0),\tau_t(0))=(\Tilde{u},\Tilde{\rho})$.  Then map \[\gamma:[0,\infty)\longrightarrow \SiL: t\mapsto \sqrt{\varphi_x}e^{\frac{\i\tau}{2}}\] is a unit speed magnetic geodesic of $(\SiL, \GL,\d\alpha)$  of strength $s$ with initial values $\gamma(0)=\mathds{1}$ and $\dot{\gc}(0)=\frac{1}{2}\left(\tu+\i\tr\right)$. 
    \end{prop} 
     The proof consists of two steps. We first prove that $\eta$ solves the magnetic geodesic equation of $(\SiL, \GL, \mathrm{d}\alpha)$ almost everywhere, and then use a bootstrapping argument to upgrade this to a classical solution. 

For the first step, the following lemma provides a useful characterization of absolutely continuous functions and is crucial in the first step of the proof of \Cref{Prop:app Weak magnetic geodesics in MAC are mapped to magnetic geodesics in Sil}.

\begin{lem}[{\cite[Chapter 3, Lemma 1.1]{t84}}]\label{Technical Lemma 4}
Let $X$ be a Hilbert space, and let $u,g\in L^1\left([0,T),X\right)$ for some $T>0$. Then the following two conditions are equivalent:
\begin{enumerate}
    \item\label{it:1 techn lemma app}
    The function $u$ is almost everywhere equal to a primitive of $g$, that is, there exists $F\in X$ such that
    \[
    u(t)= F+\int_{0}^t g(s)\,\mathrm{d}s 
    \quad\text{for almost every } t\in[0,T).
    \]

    \item\label{it:2 techn lemma app}
    For each test function $\eta\in C_c^{\infty}((0,T), X)$ it holds that
    \[
    \int_{0}^T u(t)\eta_t(t)\,\mathrm{d}t
    = -\int_0^T g(t)\eta(t)\,\mathrm{d}t.
    \]
\end{enumerate}

If either \eqref{it:1 techn lemma app} or \eqref{it:2 techn lemma app} holds, then $u$ is almost everywhere equal to an absolutely continuous function $v:[0,T)\longrightarrow X$.
\end{lem}

\subsection{Step 1: $\eta$ is almost everywhere a magnetic geodesic of $\SiL$}
In view of \Cref{Technical Lemma 4}, it suffices to prove the following:
\begin{lem}\label{Image of magnetic geodesic in MAC is almost a magnetic geodesic in Sil}
Let $(\varphi,\tau)$ be a unit speed weak magnetic geodesic of $(\MAC,\GH, \varPhi^*\mathrm{d}\alpha)$ of strength $s$ with initial values $(\varphi(0),\tau(0))=(\id,0)$ and $(\varphi_t(0),\tau_t(0))=(\Tilde{u},\Tilde{\rho})$. Denote by $(u,\rho)$ the weak solution of \eqref{(M2HS)} in the sense of \cref{d: weak solutions}, defined through $(u\circ\varphi,\rho\circ \varphi)= (\varphi_t,\tau_t)$. Then the map
\[
\eta:[0,\infty)\longrightarrow \SiL,\qquad 
t\mapsto \int_0^t\frac{\sqrt{\varphi_x}}{2}\left(u_x\circ \varphi+\i \rho\circ \varphi\right)e^{\i\frac{\tau}{2}}\,\mathrm{d}s
\]
is absolutely continuous and solves the magnetic geodesic equation of $(\SiL,\GL,\mathrm{d}\alpha)$, that is, \eqref{e:magnetic geodesic equation in SIL}, for almost every $t>0$.
\end{lem}

    \begin{proof}

    We begin by checking that the curve $t\mapsto \eta(t)$ is a well-defined curve in $\SiL$. Since $t\mapsto(\varphi(t,\cdot),\tau(t,\cdot))$ is a curve in $\MAC$, the derivative $\varphi_x(t,\cdot)$ exists for almost every $x\in \SS^1$ and satisfies $\varphi_x(t,\cdot)\ge 0$ for all $t$. Furthermore, since $(u,\rho)$ is, by \Cref{t: global weak solutions }, a weak solution of \eqref{(M2HS)}, it follows from \ref{it:weak_sol:Linfty} in \Cref{d: weak solutions} that the maps $t\mapsto u_x(t,\cdot)$ and $t\mapsto \rho(t,\cdot)$ belong to
\[
L^{\infty}\left([0,\infty), L^2\left(\SS^1,\RR \right) \right).
\]
Hence $\eta(t)$ is well defined. Moreover, using the transformation rule for absolutely continuous functions and Fubini's theorem, we obtain for almost every $t$ that
\begin{align*}
\int_{\SS^1}\lvert \eta(t,x)\rvert^2\,\mathrm{d}x
&= \int_{\SS^1}\left\lvert\int_0^t\frac{\sqrt{\varphi_x}}{2}\left(u_x\circ \varphi+\i \rho\circ \varphi\right)e^{\i\frac{\tau}{2}}\,\mathrm{d}s\right\rvert^2 \mathrm{d}x\\
&=\GH_{(\id, 0)}\bigl((u,\rho), (u,\rho)\bigr)=1.
\end{align*}
Therefore, we have shown that $t\mapsto \eta(t)$ defines (almost everywhere) a curve in $\SiL$. Thus, by \Cref{Technical Lemma 4}, the map $t\mapsto \eta(t)$ is absolutely continuous with derivative given by
\[
\frac{\sqrt{\varphi_x}}{2}\left(u_x\circ \varphi+\i \rho\circ \varphi\right)e^{\i\frac{\tau}{2}}
\]
for almost every $t>0$.

   We now prove that the curve $t\mapsto \eta(t)$ is, for almost every $t>0$, a solution of the magnetic geodesic equation of $(\SiL,\GL,\mathrm{d}\alpha)$, that is, $\eta$ is for a.e. $t>0$ a solution of~\eqref{e:magnetic geodesic equation in SIL}. By \ref{it:2 techn lemma app} in \Cref{Technical Lemma 4}, it suffices to prove the following claim.

\begin{claim}\label{Claim 1 for image of magnt geod in G in Sil}
The curve $t\mapsto \eta(t)$ satisfies
\[
\int_{\SS^1}\left[\int_{0}^{\infty}\dot{\eta}(t,x)\,\theta_t(t)\,\mathrm{d}t
+\int_{0}^{\infty}\left(\i s\,\dot{\eta}(t,x)-(1-s\delta)\eta(t,x)\right)\theta(t)\,\mathrm{d}t\right]\Theta(x)\,\mathrm{d}x=0
\]
for all $\theta\in C_c^{\infty}\bigl((0,\infty)\bigr)$ and all $\Theta\in C^{\infty}\left(\SS^1\right)$.
\end{claim}

For the moment, assume that \Cref{Claim 1 for image of magnt geod in G in Sil} is true. Then
\[
\int_{0}^{\infty}\dot{\eta}(t,x)\,\theta_t(t)\,\mathrm{d}t
=-\int_{0}^{\infty}\left(\i s\,\dot{\eta}(t,x)-(1-s\delta)\eta(t,x)\right)\theta(t)\,\mathrm{d}t
\]
for all $\theta\in C_c^{\infty}\bigl((0,\infty)\bigr)$. Thus, by \Cref{Technical Lemma 4}, the map
\[
\eta_t:[0,\infty)\longrightarrow \SiL
\]
is absolutely continuous and the identity
\[
\eta_t(t)=\eta_t(0)+ \int_{0}^{t}\left(\i s\,\dot{\eta}(s)-(1-s\delta)\eta(s)\right)\,\mathrm{d}s
\]
holds in $L^2\left(\SS^1\right)$. In particular, for almost every $t>0$ we have
\[
\eta_{tt}(t)=\i s\,\dot{\eta}(t)-(1-s\delta)\eta(t)
\]
in $L^2\left(\SS^1\right)$. Hence $t\mapsto \eta(t)$ is, for almost every $t>0$, a solution of \eqref{e:magnetic geodesic equation in SIL}, i.e.\ it satisfies the magnetic geodesic equation on $(\SiL,\GL,\mathrm{d}\alpha)$ almost everywhere.
 \end{proof}
So it remains to prove \cref{Claim 1 for image of magnt geod in G in Sil}.

\begin{proof}[Proof of \cref{Claim 1 for image of magnt geod in G in Sil}]
In order to prove \cref{Claim 1 for image of magnt geod in G in Sil}, we first show that the map $t\mapsto \eta_t$ is absolutely continuous. By \cref{Technical Lemma 4}, it suffices to prove the following claim.

\begin{claim}\label{Helpclaim 1 for Claim for image of magnetic geod}
The curve
\[
t\mapsto \eta_t(t)=\frac{\sqrt{\varphi_x}}{2}\left(u_x+\i \rho \right)\circ \varphi \, e^{\frac{\i \tau}{2}}
\]
satisfies
\begin{align*}
&\int_{\SS^1}\int_{0}^{\infty}\frac{\sqrt{\varphi_x}}{2}\left(u_x+\i \rho \right)\circ \varphi \, e^{\frac{\i \tau}{2}}\theta_t(t)\,\mathrm{d}t\,\Theta(x)\,\mathrm{d}x\\
=&-\int_{\SS^1}\int_{0}^{\infty}\frac{\sqrt{\varphi_x}}{2}\left(\frac{1}{2}\left(u_x^2-\rho^2 \right)+u_{xt}+uu_{xx}+\i (u\rho)_x\rho_t\right)\circ \varphi \, e^{\frac{\i \tau}{2}}\theta(t)\,\mathrm{d}t\,\Theta(x)\,\mathrm{d}x
\end{align*}
for all $\theta\in C_c^{\infty}\bigl((0,\infty), L^2(\SS^1,\CC)\bigr)$ and all $\Theta\in C^{\infty}\left(\SS^1\right)$.
\end{claim}
 \begin{proof}[Proof of \cref{Helpclaim 1 for Claim for image of magnetic geod}]
Since $\sqrt{\varphi_x}$, $u_x$, and $\rho$ belong to
\[
L^{\infty}\left([0,\infty),L^2\left(\SS^1,\CC\right)\right),
\]
we may apply Fubini's theorem to obtain
\[
\int_{\SS^1}\int_{0}^{\infty}\frac{\sqrt{\varphi_x}}{2}\left(u_x+\i \rho \right)\circ \varphi\, e^{\frac{\i \tau}{2}}\theta_t(t)\,\mathrm{d}t\,\Theta(x)\,\mathrm{d}x
=\int_{0}^{\infty}\int_{\SS^1}\frac{\sqrt{\varphi_x}}{2}\left(u_x+\i \rho \right)\circ \varphi\, e^{\frac{\i \tau}{2}}\Theta(x)\,\mathrm{d}x\,\theta_t(t)\,\mathrm{d}t.
\]
Integrating by parts in $t$ yields
\begin{align}\label{equation to proof helpclaim 1}
&\int_{\SS^1}\int_{0}^{\infty}\frac{\sqrt{\varphi_x}}{2}\left(u_x+\i \rho \right)\circ \varphi\, e^{\frac{\i \tau}{2}}\theta_t(t)\,\mathrm{d}t\,\Theta(x)\,\mathrm{d}x \nonumber \\
=&-\int_{0}^{\infty}\int_{\SS^1}\frac{\mathrm{d}}{\mathrm{d}t}\left(\frac{\sqrt{\varphi_x}}{2}\left(u_x+\i \rho \right)\circ \varphi\, e^{\frac{\i \tau}{2}}\Theta(x)\right)\,\mathrm{d}x\,\theta(t)\,\mathrm{d}t.
\end{align}
Since $\Theta$ is independent of $t$, computing the time derivative in \eqref{equation to proof helpclaim 1} using the chain rule yields
\begin{align*}
&\int_{\SS^1}\int_{0}^{\infty}\frac{\sqrt{\varphi_x}}{2}\left(u_x+\i \rho \right)\circ \varphi\, e^{\frac{\i \tau}{2}}\theta_t(t)\,\mathrm{d}t\,\Theta(x)\,\mathrm{d}x \\
=&-\int_{\SS^1}\int_{0}^{\infty}\frac{\sqrt{\varphi_x}}{2}\left(\frac{1}{2}\left(u_x^2-\rho^2 \right)+u_{xt}+uu_{xx}+\i (u\rho)_x\rho_t\right)\circ \varphi\, e^{\frac{\i \tau}{2}}\theta(t)\,\mathrm{d}t\,\Theta(x)\,\mathrm{d}x.
\end{align*}
This proves \Cref{Helpclaim 1 for Claim for image of magnetic geod}.
\end{proof}
Using \Cref{Helpclaim 1 for Claim for image of magnetic geod} together with \Cref{Technical Lemma 4}, we infer that the map
\[
\eta_t:[0,\infty)\longrightarrow L^2\left(\SS^1,\CC \right)
\]
is absolutely continuous and that the identity
\[
\eta_t(t)=\eta_t(0)+\int_0^t\frac{\sqrt{\varphi_x}}{2}\left(\frac{1}{2}\left(u_x^2-\rho^2 \right)+u_{xt}+uu_{xx}+\i (u\rho)_x\rho_t\right)\circ \varphi \, e^{\frac{\i \tau}{2}}\,\mathrm{d}s
\]
holds in $L^2\left(\SS^1,\CC \right)$. 

Note that $(\varphi,\tau)$ is a weak magnetic geodesic of $(\MAC, \GH, \varPhi^*\mathrm{d}\alpha)$. Hence, for almost every $t>0$, it solves by \ref{it:weak_geod:equation} of \Cref{d: weak magnetic geodesics in MAC} the equation \eqref{e: magentic geod equation of Gm at identity}. Therefore, for almost every $t>0$ and $x\in \SS^1$, it satisfies \eqref{(M2HS)}, i.e.
\[
\begin{cases}
u_{tx}=-\frac{1}{2}u_x^2-u\,u_{xx}+\frac{1}{2}\rho^2-\bigl(s\rho+2(c^2-s\delta)\bigr),\\[2mm]
\rho_t=-(\rho u)_x+s u_x.
\end{cases}
\]

Substituting this into the previous identity yields
\begin{align}
\eta_t(t)
&=\eta_t(0)+\int_0^t\frac{\sqrt{\varphi_x}}{2}\left(\i s(u_x+\i \rho)-2(1-s\delta)\right)\circ \varphi \, e^{\frac{\i \tau}{2}}\,\mathrm{d}\Tilde{t}\nonumber\\
&=\eta_t(0)+\i s\int_0^t\frac{\sqrt{\varphi_x}}{2}\left(u_x+\i \rho\right)\circ \varphi \, e^{\frac{\i \tau}{2}}\,\mathrm{d}\Tilde{t}
-\int_0^t\frac{\sqrt{\varphi_x}}{2}\left(2(1-s\delta)\right)\circ \varphi \, e^{\frac{\i \tau}{2}}\,\mathrm{d}\Tilde{t}
\label{Equation for eta_t}
\end{align}
in $L^2\left(\SS^1,\CC \right)$.

By an argument similar to the proof of \Cref{Helpclaim 1 for Claim for image of magnetic geod}, one can also show that the curve
\[
\eta:[0,\infty)\longrightarrow \SiL,\qquad t\mapsto \sqrt{\varphi_x}\, e^{\frac{\i \tau}{2}}
\]
satisfies
\begin{equation}\label{Help equation for continuity of Varphi}
\int_{\SS^1}\int_{0}^{\infty}\frac{\sqrt{\varphi_x}}{2}\left(u_x+\i \rho \right)\circ \varphi\, e^{\frac{\i \tau}{2}}\theta(t)\,\mathrm{d}t\,\Theta(x)\,\mathrm{d}x
=-\int_{\SS^1}\int_{0}^{\infty}\sqrt{\varphi_x}\, e^{\frac{\i \tau}{2}}\theta_t(t)\,\mathrm{d}t\,\Theta(x)\,\mathrm{d}x
\end{equation}
for all $\theta\in C_c^{\infty}\bigl((0,\infty)\bigr)$ and all $\Theta\in C^{\infty}\left(\SS^1\right)$. 

Therefore, by \eqref{Help equation for continuity of Varphi} and \Cref{Technical Lemma 4}, the identity
\begin{equation}\label{equation for eta}
\eta(t)=\eta(0)+\int_0^t\frac{\sqrt{\varphi_x}}{2}\left(u_x+\i \rho \right)\circ \varphi\, e^{\frac{\i \tau}{2}}\,\mathrm{d}\Tilde{t}
\end{equation}
holds in $L^2\left(\SS^1,\CC \right)$.

Combining \eqref{equation for eta} and \eqref{Equation for eta_t}, we immediately obtain
\begin{equation}\label{eq: ode for eta holds almost everywhere}
    \ddot{\eta}=\i s\dot{\eta}-(1-s\delta)\eta
\end{equation}
for almost every $t>0$ in $L^2\left(\SS^1,\CC \right)$. This finishes the proof of \Cref{Claim 1 for image of magnt geod in G in Sil}.

\end{proof}
\subsection{Step 2: Bootstrapping}
Since $\eta$ is by \eqref{equation for eta} and \Cref{Technical Lemma 4} of Sobolev class $H^1$ in time and satisfies \eqref{eq: ode for eta holds almost everywhere} almost everywhere, we conclude that $\eta$ is of Sobolev class $H^2$ in time.\\
Thus, by the Sobolev embedding theorem combined with an analogous bootstrapping argument, we obtain that $\eta \in C^2(\RR, \SiL)$ in time. In particular, $\eta$ is a magnetic geodesic of $(\SiL, \GL, \mathrm{d}\alpha)$. This finishes the proof of \Cref{Prop:app Weak magnetic geodesics in MAC are mapped to magnetic geodesics in Sil}. \qed
\bibliographystyle{abbrv}
\bibliography{ref}
\end{document}